\newcommand{\law}       {\mathcal{L}}
\newcommand{\eqlaw} {\stackrel{\law}{=}}
\newcommand{\eqdef} {\stackrel{\textrm{def}}{=}}
\newcommand{\inlaw} {\overset{\law}{\to}}
\newcommand{\inprob}    {\overset{\mathcal{P}}{\to}}
\newcommand{\ex}[1] {\mathbf{E}\left\{ #1 \right\}}
\newcommand{\pr}[1] {\mathbf{P}\left\{ #1 \right\}}
\newcommand{\var}[1]    {\mathbf{Var}\left\{ #1 \right\}}
\newcommand{\event}[1]  {\left[ #1 \right]}
\newcommand{\ind}[1]    {\mathbbm{1}_{#1}}
\newcommand{\leg}[1]    {\Lambda^*\left( #1 \right)}
\renewcommand{\exp}[1]  {\operatorname{exp}\left( #1 \right)}
\newcommand{\Ord}[1]    {O\left( #1 \right)}
\newcommand{\floor}[1]  {\left\lfloor #1 \right\rfloor}
\newcommand{\ceil}[1]   {\left\lceil #1 \right\rceil}
\newcommand{\wt}        {\widetilde}
\newcommand{\eps}       {\varepsilon}
\newcommand{\R}     {\mathbb{R}}
\newcommand{\N}     {\mathbb{N}}
\newcommand{\F}     {\mathcal{F}}
\newcommand{\red}[1]    {{#1}}
\newtheorem{theorem}{Theorem}
\newtheorem{lemma}{Lemma}
\newtheorem{proposition}{Proposition}
\newtheorem{corollary}{Corollary}
\theoremstyle{definition}
\newtheorem*{remark}{Remark}
\newtheorem*{claim}{Claim}
\title{Depth properties of Scaled Attachment \linebreak Random Recursive Trees}
\author{Luc Devroye}
\address{School of Computer Science, McGill University, Montreal, Canada H3A 2K6.}
\email{luc@cs.mcgill.ca}
\author{Omar Fawzi}
\address{School of Computer Science, McGill University, Montreal, Canada H3A 2K6.}
\email{ofawzi@cs.mcgill.ca}
\author{Nicolas Fraiman}
\address{Department of Mathematics and Statistics, McGill University, Montreal, Canada H3A 2K6.}
\email{fraiman@math.mcgill.ca}
\subjclass[2010]{60C05}
\thanks{Research supported by an NSERC
Discovery Grant program. Authors' address: School of Computer Science and Department of Mathematics and Statistics, McGill University, Montreal, Canada H3A 2K6 }
\date{\today}
\begin{document}
\keywords{Random trees, height, power of choice, renewal process, second moment method}

\maketitle

\begin{abstract}
We study depth properties of a general class of random recursive
trees where each node $i$ attaches to the random node
$\floor{iX_i}$ and $X_0, \dots, X_n$ is a sequence of i.i.d.\
random variables taking values in $[0,1)$. We call such trees
scaled attachment random recursive trees (\textsc{sarrt}). We
prove that the typical depth $D_n$, the maximum depth (or height)
$H_n$ and the minimum depth $M_n$ of a \textsc{sarrt} are
asymptotically given by $D_n \sim \mu^{-1} \log n$, $H_n \sim
\alpha_{\max} \log n$ and $M_n \sim \alpha_{\min} \log n$ where
$\mu, \alpha_{\max}$ and $\alpha_{\min}$ are constants depending
only on the distribution of $X_0$ whenever $X_0$ has a density. In
particular, this gives a new elementary proof for the height of
uniform random recursive trees $H_n \sim e \log n$ that does not
use branching random walks.
\end{abstract}

\section{Introduction}
A \textit{uniform random recursive tree} (\textsc{urrt}) $T_n$ of
order $n$ is a tree with $n+1$ nodes labeled $\{0, 1, \dots, n\}$
constructed as follows. The root is labeled $0$, and for $1 \leq i
\leq n$, the node labeled $i$ is inserted and chooses a vertex in
$\{0, \dots, i-1\}$ uniformly at random as its parent. The
asymptotic properties of $T_n$ -- the depth of the last inserted
node, the height of the tree, the degree distribution,  the number
of leaves, the profile and so forth -- have been extensively
studied starting from \citet{Moo74,Gas77} and \citet{NR70}.
In particular, \citet{Szy90} showed that the depth $D_n$ of node
$n$ is $(1+o(1))\log n$ with probability going to $1$ and
\citet{Pit94} proved that the height $H_n$ is $(e + o(1)) \log n$
with probability going to $1$. Distance measures in a
\textsc{urrt} were also considered by \citet{Dob96,DF99,MM78,Nei02} 
and \citet{SFH06}. For a survey, see \citet{Drm09} and \citet{SM95}.

A natural generalization of this model introduced by \citet{DL95}
is to let a vertex choose $k>1$ parents uniformly. This
construction defines a random directed acyclic graph
($k$-\textsc{dag}), which was used to model circuits \citet{TX96,
AGM99}.

The uniformity condition was relaxed by \citet{Szy87} by letting
the probabilities of being chosen as a parent depend on the degree
of the parent. When the probability of linking to a node is
proportional to its degree, this gives a random plane-oriented
recursive tree, the typical depth of which was studied by
\citet{Mah92} and the height of which was studied  by
\citet{Pit94}. When $k>1$ parents are chosen for each node, the
popular preferential attachment model of \citet{BA99} is obtained.

Motivated by recent work on distances in random $k$-\textsc{dag}s
(\citet{DJ09}) and on the power of choice in the construction of random
trees (\citet{DKM07, Mah09}), we introduce a generalization of
uniform random recursive trees. In a \emph{scaled attachment
random recursive tree} (\textsc{sarrt}), a node $i$ chooses its
parent to be the node labeled $\floor{iX_i}$ where $X_0, X_1,
\dots, X_n$ is a sequence of independent random variables
distributed as $X$ over $[0,1)$. Note that the choice of the
parent here only depends on the labels of previous nodes and not
on their properties relative to the tree (like the degree, for
example). In particular, if $X$ is uniform on $[0,1)$ we get a
\textsc{urrt}. The distribution $\law(X)$ of $X$ is called the
attachment distribution.

We study properties of the depth (path distance to the root of the
tree) of nodes in a \textsc{sarrt} with a general attachment
distribution. We determine the first-order asymptotics for the
depth $D_n$ of the node labeled $n$, the height $H_n = \max_{1
\leq i \leq n} D_i$ of the tree and the minimum depth $M_n =
\min_{n/2 \leq i \leq n} D_i$. Our result gives a new way of computing the
 height of a \textsc{urrt} that is not based on branching
random walks that were used in previous proofs by \citet{Dev87}
and \citet{Pit94}.

Furthermore, setting $X = \max(U_1, \dots, U_k)$ where $U_1,
\dots, U_k$ are independent random variables with uniform
distribution over $[0,1)$, the depth $D_i$ of node $i$ in a
\textsc{sarrt} with attachment $X$ is the distance given by
following the oldest parent from node $i$ to the root in a random
$k$-\textsc{dag} \citep{DJ09, Mah09}. This problem can be seen as
a ``power of choice'' question: how much can one optimize
properties of the tree when each node is given $k$ choices of
parents? A new node is given $k$ choices of parents, and it
selects the best one according to some criterion. In the setting
of this paper, we study selection criteria that only depend on the
labels or arrival times of the potential parents. Our results
describe the influence of a large class of such selection criteria on
the depth of the last inserted node, the height and the minimum
depth of the tree. This holds for a \textsc{urrt} and for almost
any \textsc{sarrt} as well. Some examples are given in Section
\ref{sec:applications}.

\red{
\noindent \textbf{Outline of the results.}
In Section \ref{sec:depth}, we prove a
concentration result and a central limit theorem for $D_n$ for a very
general class of attachment distributions:
\[
\frac{D_n}{\log n} \inprob \frac{1}{\mu} \qquad\text{and}\qquad
\frac{D_n - \mu^{-1}\log n}{ \sigma \sqrt{\mu^{-3}\log n} } \inlaw \mathcal{N}(0,1),
\]
where $\mu$ and $\sigma^2$ are simply the expected value and the
variance of $-\log X$, $\mathcal{N}(0,1)$ denotes the standard
Gaussian distribution and the symbols $\inprob$ and $\inlaw$ refer
to convergence in probability and convergence in distribution.
This generalizes a result of \citet{Mah09}. In Sections
\ref{sec:height} and \ref{sec:mindepth}, we prove the main theorems 
(Theorems \ref{thm:heightmain} and \ref{thm:mindepthmain}) of this 
paper: if $\law(X)$ has a density on $[0,1)$, then there exist constants
$\alpha_{\max}$ and $\alpha_{\min}$ such that
\[
\lim_{n\to \infty} \frac{H_n}{\log n} = \alpha_{\max}
\quad\text{almost surely},
\qquad\text{and}\qquad
\frac{M_n}{\log n} \inprob \alpha_{\min},
\]
where $H_n$ and $M_n$ denote the height and minimum depth of the
\textsc{sarrt} with attachment $X$. These constants are defined as
the solutions of equations involving a rate function associated with
$\log X$. The proof of these results uses a second moment method.
\red{ 
The main difficulty in the proof is in controlling the dependencies between 
the paths up to the root that originate from different nodes. We also prove
 that $\lim_{n \to \infty} \frac{\ex{H_n}}{\log n} = \alpha_{\max}$.
} 

The different results are applied to study the properties of
various path lengths in a random $k$-\textsc{dag} in Section
\ref{sec:applications}. Lastly, we include an appendix proving
some simple properties of the large deviation rate functions used.

\noindent \textbf{Notation.}
As introduced earlier, the symbols $\inprob$ and $\inlaw$ refer to convergence 
in probability and convergence in distribution respectively. For random 
variables $X$ and $Y$, we write $\law(X)$ for the distribution of $X$ and 
$X \eqlaw Y$ when $X$ and $Y$ have the same distribution. For a general 
random variable $X \in [0,1)$, we define
\[
\mu = \ex{-\log X} \geq 0 \qquad\text{and}\qquad \sigma^2 = \var{-\log X}.
\]
If $X$ has an atom at $0$, then $\mu = \sigma = + \infty$.
If $\mu = + \infty$, then we define $\sigma = + \infty$.
A \textsc{sarrt} with attachment distribution $\law(X)$ is described by a sequence $X_0, X_1, \dots, X_n$ of i.i.d.\
random variables distributed as $X$. The parent of node $i$ is labeled $\floor{iX_i}$. The root of
the tree is labeled $0$ and $L(n, j)$ is the (random) label of the
$j$-th grandparent of $n$ on its path to the root. Note that $L(n,
j+1) = \floor{L(n,j) X_{L(n,j)}}$ and that $L(n,0)=n$. 
The depth $D_i$ of node $i$ is defined by
$
D_i=\min\{j \geq 0: L(i,j) = 0\}.
$
}

\section{The depth of a typical node}\label{sec:depth} 
We look at the sequence of labels from node $n$
to the root as a renewal process. We have
\begin{align*}
D_n&=\min\{j \geq 0: L(n,j) = 0\} \\
&=\min\{j \geq 0: \floor{\dots\floor{\floor{nX_n}X_{L(n,1)}}\dots X_{L(n,j-1)}} =0\}.
\end{align*}
Note that
\[
nX_nX_{L(n,1)}\dots X_{L(n,j-1)} - j
\leq \floor{\dots\floor{\floor{nX_n}X_{L(n,1)}}\dots X_{L(n,j-1)}}
\leq nX_nX_{L(n,1)}\dots X_{L(n,j-1)}.
\]

\begin{remark}
Since $X\in [0,1)$, we have $\mu = \ex{-\log X} > 0$.
Thus, the following theorem covers all the possible cases.
\end{remark}

\begin{theorem}\label{thm:depth}
\begin{flalign*}
&\text{If $\mu = +\infty$, then}\quad \frac{D_n}{\log n} \inprob 0 \quad \text{ and }
\quad \lim_{n \to \infty} \frac{\ex{D_n}}{\log n} = 0. & \tag{A}\label{eq:deptha} \\
&\text{If $\mu < +\infty$, then}\quad \frac{D_n}{\log n} \inprob \frac{1}{\mu} \quad \text{ and }
\quad \lim_{n \to \infty} \frac{\ex{D_n}}{\log n} = \frac{1}{\mu}. & \tag{B}\label{eq:depthb} \\
&\text{If $\mu < +\infty$ and $0 < \sigma^2 < +\infty$, then}\quad
    \frac{D_n -\log n/\mu}{\sigma\sqrt{\log n/\mu^3}} \inlaw \mathcal{N}(0,1).
    & \tag{C}\label{eq:depthc} \\
&\text{If $\mu < +\infty$ and $\sigma^2 = 0$, then}\quad
    D_n -\log n/\mu = o\left(\sqrt{\log n}\right) \quad\text{almost surely.}
    & \tag{D}\label{eq:depthd}
\end{flalign*}
\end{theorem}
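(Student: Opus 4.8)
The plan is to analyze $D_n$ through the renewal process formed by the partial products $P_j = X_n X_{L(n,1)} \cdots X_{L(n,j-1)}$, or rather by their logarithms $S_j = -\sum_{k=0}^{j-1} \log X_{L(n,k)}$, which is a random walk with i.i.d.\ increments distributed as $-\log X$ (the increments are independent because each node on the path is visited only once, so the $X_{L(n,k)}$ are genuinely fresh i.i.d.\ copies). From the sandwich inequality in the excerpt, $nP_j - j \le L(n,j) \le nP_j$, so $D_n$ is within a controlled range of the first $j$ at which $nP_j$ drops below $1$, i.e.\ of $N_n = \min\{j : S_j > \log n\}$. The key point is that the lower-order term $j$ in the sandwich is negligible: once $S_j$ exceeds $\log n - \log\log n$ (say) we have $nP_j \le \log n$, hence $L(n,j) \le \log n$, and then only $O(\log\log n)$ further steps are needed to reach $0$, which is $o(\log n / \mu)$ when $\mu > 0$ finite and even easier to absorb otherwise. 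So it suffices to prove the four statements for $N_n$, the first-passage time of the random walk $S_j$ above level $\log n$.

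For part (B), when $\mu < \infty$, the elementary renewal theorem gives $N_n / \log n \to 1/\mu$ a.s.\ (hence in probability and, with a uniform integrability argument via Wald's identity, in expectation); transferring back to $D_n$ via the sandwich gives both claims. For part (A), when $\mu = +\infty$, the random walk still satisfies $S_j / j \to +\infty$ a.s., so $N_n = o(\log n)$ a.s.; for the expectation statement one truncates $-\log X$ at a large level $M$, applies the finite-mean renewal bound to get $\ex{N_n} \lesssim (\log n)/\mathbf{E}\{\min(-\log X, M)\}$, and lets $M \to \infty$ after $n\to\infty$. For part (C), when $0 < \sigma^2 < \infty$, this is the classical central limit theorem for renewal processes (Anscombe's theorem): $N_n$ is asymptotically normal with mean $\log n/\mu$ and variance $\sigma^2 \log n/\mu^3$. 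Again one must check that replacing $N_n$ by $D_n$ — i.e.\ adding the $O(\log\log n)$ correction and handling the floor discrepancies — perturbs things by $o(\sqrt{\log n})$, which it does. Part (D), $\sigma^2 = 0$, means $-\log X$ is a.s.\ equal to the constant $\mu$, so $S_j = j\mu$ deterministically and $N_n = \lceil \log n/\mu\rceil$; the only randomness left is in the floor corrections, and one shows these accumulate to at most $O(\log\log n) = o(\sqrt{\log n})$ a.s.

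The main obstacle I anticipate is making the reduction from $D_n$ to $N_n$ fully rigorous and uniform enough to preserve the $\sqrt{\log n}$-scale fluctuations in part (C) and the $o(\sqrt{\log n})$ bound in part (D). The subtlety is that the errors from the floor operations are not independent of the walk and could in principle compound: each step loses at most $1$ from the sandwich, but one must argue that the cumulative effect on the stopping index is $O(\log\log n)$ rather than, say, $O(\log n)$. The clean way is to show that from the moment $L(n,j)$ first falls below a slowly growing threshold such as $(\log n)^2$, the remaining depth is stochastically dominated by the depth of a \textsc{sarrt} of order $(\log n)^2$, whose expectation and fluctuations are $O(\log\log n)$ by the already-established part (B)-type bound (or a crude union bound), and this is comfortably $o(\sqrt{\log n})$. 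Everything else — the renewal theorem, Wald's identity, Anscombe's theorem, the truncation in the infinite-mean case — is standard.
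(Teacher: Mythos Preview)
Your proposal is correct and follows essentially the same route as the paper: both sandwich $D_n$ between renewal counting times for the walk with i.i.d.\ increments $-\log X$ at levels $\log n$ and $\log n - O(\log\log n)$, then invoke the strong law/elementary renewal theorem for (A)--(B), the renewal CLT for (C), and treat (D) as the degenerate constant case. The only cosmetic differences are that the paper realizes the lower bound via the shifted level $f(n)=\log n - 2\log\log n$ rather than your two-stage argument, and in (D) it obtains the sharper bound $|D_n - \log n/\mu|\le 1$ directly (note that when $X$ is a.s.\ constant there is no residual randomness from the floors either---the entire tree is deterministic).
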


\begin{remark}
\citet{Mah09} proved a similar result using generating functions
for the case $X \eqlaw \max(U_1, \dots, U_k)$ and $X \eqlaw
\min(U_1, \dots, U_k)$. Details are given in Section
\ref{sec:applications}.
\end{remark}

\begin{proof}
We consider an auxiliary renewal process $R_t=\sup \big\{j:
\sum_{i=1}^j Z_i \leq t \big\}$ with interarrival times
distributed as $Z_i\eqlaw -\log X$ for all $i$. When $\mu < +\infty$,
the strong law of large numbers for renewal processes gives that
$R_t/t \to 1/\mu$ almost surely (see \citealp{Ros96}, Proposition
3.3.1). Moreover, the elementary renewal theorem implies that
$\ex{R_t}/t \to 1/\mu$. The following claim handles the case $\mu 
= +\infty$.

\begin{claim}
For $\mu = +\infty$, $\lim_{t \to \infty} \frac{R_t}{t} = 0$ with
probability $1$ and $\lim_{t \to \infty} \frac{\ex{R_t}}{t} = 0$.
\end{claim}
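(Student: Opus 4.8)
The plan is to reduce the infinite-mean case to the classical finite-mean renewal theorems by truncating the interarrival times. Fix $K > 0$, set $Z_i^{(K)} = \min(Z_i, K)$, and let $R_t^{(K)} = \sup\{j : \sum_{i=1}^j Z_i^{(K)} \le t\}$ be the renewal counting process with these bounded interarrival times. Write $\mu_K = \ex{Z_1^{(K)}}$. Since $Z_1 = -\log X > 0$ almost surely (because $X$ takes values in $[0,1)$) and $0 \le Z_1^{(K)} \le K$, we have $0 < \mu_K \le K < \infty$, so $R^{(K)}$ is an ordinary renewal process to which the strong law of large numbers for renewal processes (\citealp{Ros96}, Proposition 3.3.1) and the elementary renewal theorem both apply, giving $R_t^{(K)}/t \to 1/\mu_K$ almost surely and $\ex{R_t^{(K)}}/t \to 1/\mu_K$.

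Next I would use the obvious monotonicity: shrinking every step can only make the process count faster. Since $Z_i^{(K)} \le Z_i$ for all $i$, the partial sums satisfy $\sum_{i=1}^j Z_i^{(K)} \le \sum_{i=1}^j Z_i$, hence $R_t \le R_t^{(K)}$ for every $t \ge 0$. Consequently, almost surely,
\[
\limsup_{t\to\infty} \frac{R_t}{t} \le \lim_{t\to\infty} \frac{R_t^{(K)}}{t} = \frac{1}{\mu_K},
\qquad\text{and}\qquad
\limsup_{t\to\infty} \frac{\ex{R_t}}{t} \le \lim_{t\to\infty} \frac{\ex{R_t^{(K)}}}{t} = \frac{1}{\mu_K}.
\]
In particular $R_t$ is almost surely finite for every $t$.

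Finally I would let $K \to \infty$. By monotone convergence, $\mu_K = \ex{\min(Z_1,K)} \uparrow \ex{Z_1} = \mu = +\infty$, so $1/\mu_K \to 0$. Intersecting over $K \in \N$ the probability-one events on which the first bound holds (a countable intersection, hence still of probability one) gives $\limsup_{t\to\infty} R_t/t \le 0$ almost surely; since $R_t \ge 0$, this is precisely $R_t/t \to 0$ almost surely. Passing to the limit in the second bound likewise yields $\ex{R_t}/t \to 0$. There is no serious obstacle here; the only points needing care are getting the direction of the monotonicity right (larger steps produce fewer renewals) and observing that monotone convergence still forces $\mu_K \to +\infty$ even when $X$ has an atom at $0$, so that $Z_1 = +\infty$ with positive probability.
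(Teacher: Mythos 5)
Your proof is correct and follows essentially the same route as the paper: truncate the interarrival times, use the monotonicity $R_t \le R_t^{(K)}$ together with the strong law and elementary renewal theorem for the truncated (finite-mean) process, and then let the truncation level tend to infinity so that $1/\mu_K \to 0$. The paper phrases the last step by choosing the truncation so that the truncated mean exceeds an arbitrary $b$, but this is the same argument, and your added details (monotone convergence for $\mu_K \uparrow +\infty$ and the countable intersection of almost-sure events) are fine.
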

\begin{proof}
For fixed $b > 0$, let $\wt{Z}_i = \min(Z_i, a)$ where $a$ is chosen 
so that $\mathbf{E}\big\{\wt{Z}_i\big\}\geq b$. Consider the renewal
process $\wt{R}_t$ with interarrival times $\wt{Z}_i$. By the fact
that $R_t \leq\wt{R}_t$  and the law of large numbers for
$\wt{R}_t$ we have, for sufficiently large $t$, $R_t/t \leq
\wt{R}_t/t < 2/b$ almost surely. Since $b$ is arbitrary, we have
$R_t/t\to 0$ with probability 1. The convergence of the expected
value is proved in a similar way. This concludes the proof of the
claim.
\end{proof}

We upper bound the depth of node $n$ by
\begin{align*}
D_n &\leq \min \left\{ j: nX_nX_{L(n,1)}\dots X_{L(n,j-1)} < 1 \right\} \\
&= \min \left\{ j: \textstyle{\sum}_{i=0}^{j-1} -\log X_{L(n,i)} > \log n \right\}
\eqdef \widehat{D}_n.
\end{align*}
For $n \geq 1$, $\widehat{D}_n \eqlaw R_{\log n} + 1$. So, we have for any $\eps > 0$ that
\begin{equation}\label{eq:dnupper}
\pr{\frac{D_n}{\log n} > \frac{1}{\mu}+\eps} \leq
\pr{\frac{\widehat{D}_n}{\log n} > \frac{1}{\mu}+\eps} =
\pr{\frac{R_{\log n} + 1}{\log n} > \frac{1}{\mu}+\eps} = o(1).
\end{equation}
Since $D_n > 0$, equation \eqref{eq:dnupper} proves part
\eqref{eq:deptha} of the theorem (by writing $1/\mu=0$ when
$\mu=+\infty$).

Similarly, a lower bound is given by
\begin{align*}
D_n &\geq \min \left\{ j: nX_n\dots X_{L(n,j-1)}-j < 1 \right\} \\
&\geq \min \left\{ j: \textstyle{\sum}_{i=0}^{j-1} -\log X_{L(n,i)} > \log n-\log j \right\}.
\end{align*}
Let $j(n) = \floor{\log^2 n}$ and define the event
\[
E_n=\left[\;\sum_{i=0}^{j(n)-1} -\log X_{L(n,i)} > \log n \, \right].
\]
Using the upper bound \eqref{eq:dnupper}, we have that
$\pr{E_n}\to 1$. Also, we have $\log j \leq 2 \log \log n$ and if
we define $f(n)=\log n - 2 \log\log n$, then when $E_n$ holds
\begin{align*}
D_n &\geq \min \left\{j: \textstyle{\sum}_{i=0}^{j-1} -\log X_{L(n,i)} > f(n)\right\} \eqdef \overline{D}_n.
\end{align*}
We have $\overline{D}_n \eqlaw R_{f(n)} + 1$ for $n\geq 2$, and thus,
\begin{equation}\label{eq:dnlower}
\pr{\frac{\overline{D}_n}{\log n} < \frac{1}{\mu}-\eps} =
\pr{\frac{R_{f(n)}+1}{f(n)} \cdot \frac{f(n)}{\log n} < \frac{1}{\mu}-\eps } = o(1),
\end{equation}
by the law of large numbers for renewal processes and the fact that
\[
\lim_{n \to \infty}\, \frac{f(n)}{\log n} = 1.
\]
Combining \eqref{eq:dnupper} and \eqref{eq:dnlower} with the fact
that $\pr{D_n\geq\overline{D}_n}\geq \pr{E_n}$ we obtain
convergence in probability of part \eqref{eq:depthb} of the
theorem. As for the expected value, we have for any $\eps > 0$,
\[
(1/\mu-\eps) \log n \cdot \pr{D_n \geq (1/\mu-\eps) \log n} \leq \ex{D_n} \leq \ex{\widehat{D}_n}
\]
which completes the proof of \eqref{eq:depthb}.

By similar arguments using the central limit theorem for renewal
processes (see \citealp{Ros96}, Theorem 3.3.5) we can prove part
\eqref{eq:depthc} for $D_n$, by showing that
\[
\lim_{n \to \infty} \pr{\frac{\widehat{D}_n-\log n/\mu}{\sigma\sqrt{\log n/\mu^3}} \leq c } = \Phi(c)
\qquad\text{and}\qquad
\lim_{n \to \infty} \pr{\frac{\overline{D}_n-\log n/\mu}{\sigma\sqrt{\log n/\mu^3}} \leq c } = \Phi(c),
\]
where $\Phi$ is the cumulative distribution function of a standard
$\mathcal{N}(0,1)$ variable. The result follows from the fact that
$\overline{D}_n\leq D_n \leq \widehat{D}_n$ with probability going
to $1$ as $n\to\infty$. The first limit is clear and to show the
second limit, write
\[
\frac{\overline{D}_n-\log n/\mu}{\sigma\sqrt{\log n/\mu^3}}
=   \frac{\big( \overline{D}_n-f(n)/\mu \big)+\big( f(n)/\mu - \log n/\mu \big)}{\sigma\sqrt{f(n)/\mu^3}}
    \cdot \sqrt{\frac{f(n)}{\log n}}
\]
where we have 
\[
\lim_{n\to\infty} \frac{f(n)/\mu - \log n/\mu}{\sigma\sqrt{f(n)/\mu^3}} =
\lim_{n\to\infty} \frac{-2\log\log n/\mu}{\sigma\sqrt{f(n)/\mu^3}} =
\lim_{n\to\infty} \frac{-2\log\log n/\mu}{\sigma\sqrt{\log n - 2\log\log n/\mu^3}} = 0.
\]
Also, the central limit theorem for renewal processes implies that
\[
\lim_{n \to \infty} \pr{\frac{\overline{D}_n-f(n)/\mu}{\sigma\sqrt{f(n)/\mu^3}} \leq c } = \Phi(c).
\]

When $\sigma^2=0$, $X=e^{-\mu}\in (0,1)$ almost surely. Then the
label of node $i$ parent is $\floor{ie^{-\mu}}$ and
$L(n,j)=\floor{\floor{\floor{ne^{-\mu}}e^{-\mu}}\dots e^{-\mu}}$
($j$ times) almost surely. Since $ne^{-j\mu} -j \leq L(n,j) \leq
ne^{-j\mu}$ and for $n \geq n_0(\mu)$ we have $ne^{-j\mu} < 1$
when $j > \log n/\mu$ and  $ne^{-j\mu} -j > 1$ when $j < \log
n/\mu$. Then, we have that $|D_n-\log n/\mu|\leq 1$ for $n \geq
n_0$. Therefore we get part \eqref{eq:depthd} of the theorem.
\end{proof}


\section{The height of the tree}\label{sec:height}

\red{
We turn our attention to the height $H_n = \max_{1 \leq i \leq n} D_i$ of a \textsc{sarrt}.  
For a random variable $Y$, we define its cumulant generating function $\Lambda_Y$
and its convex (Fenchel--Legendre) dual $\Lambda^*_Y$ as follows:
\begin{equation}\label{eq:lambdaY}
\Lambda_Y(\lambda) = \log \ex{e^{\lambda Y}} \quad\text{and}\quad
\Lambda^*_Y(z) = \sup_{\lambda\in\R} \big\{\lambda z - \Lambda_Y(\lambda) \big\}.
\end{equation}
Since we mostly use these functions for $Y = \log X$, we omit the subscript in this case.
We write
\begin{equation}\label{eq:lambda}
\Lambda(\lambda) = \log \ex{e^{\lambda \log X}} = \log \ex{X^\lambda} \quad\text{and}\quad
\Lambda^*(z) = \sup_{\lambda\in\R} \big\{\lambda z - \Lambda(\lambda) \big\}
\end{equation}
for the cumulant generating function of $\log X$ and its dual. It
is well known that
$\Lambda^*(z) = \sup_{\lambda \geq 0} \big\{\lambda z - \Lambda(\lambda) \big\}$ for $z \geq \ex{\log X}$ and
$\Lambda^*(z) = \sup_{\lambda \leq 0} \big\{\lambda z - \Lambda(\lambda) \big\}$ for $z \leq \ex{\log X}$.
This is proved along with many properties of $\Lambda^*$ used in
the paper in Appendix \ref{sec:applambda}. We also define
\begin{equation}\label{eq:psi}
\Psi(c)=c\leg{-1/c}
\end{equation}
and
\begin{equation}\label{eq:alphamax}
\alpha_{\max} = \inf\left\{c:\; c > \frac{1}{\mu} \;\text{ and }\; \Psi(c) > 1\right\}
\end{equation}
where we define $1/\mu = 0$ when $\mu = + \infty$.
Proposition \ref{prop:psi} in the appendix shows that the set $\left\{c:\; c >
\frac{1}{\mu} \;\text{ and }\; \Psi(c) > 1\right\}$ is non-empty,
$\alpha_{\max} < +\infty$ and if $X$ is not a constant,
$\alpha_{\max} > 1/\mu$.

The following theorem sums up the results we prove in this section.
\begin{theorem}\label{thm:heightmain}
The height $H_n$ of a \textsc{sarrt} with attachment $X$ having a density satisfies
\[
\lim_{n \to \infty} \frac{H_n}{\log n} = \alpha_{\max} \quad \text{with probability }1,
\qquad\text{and}\qquad \lim_{n \to \infty} \frac{\ex{H_n}}{\log n} = \alpha_{\max},
\]
where $\alpha_{\max}$ is defined in equation \eqref{eq:alphamax}.
\end{theorem}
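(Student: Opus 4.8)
The plan is to prove matching upper and lower bounds on $H_n/\log n$, both holding almost surely, via a second moment argument on the number of deep nodes. Fix $c > 0$ and let $N_n(c)$ denote the number of nodes $i \le n$ with $D_i \ge c \log n$. The behaviour of $H_n$ is governed by $\ex{N_n(c)}$ and $\var{N_n(c)}$: if $\ex{N_n(c)} \to 0$ along a subsequence then $H_n < c\log n$ eventually on that subsequence, and if $\ex{N_n(c)}\to\infty$ with $\var{N_n(c)} = o(\ex{N_n(c)}^2)$ then $N_n(c) > 0$ with probability tending to $1$. So the first step is to estimate $\pr{D_i \ge c\log n}$ for $i$ of order $n$. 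Writing the path from $i$ to the root as a product $iX_iX_{L(i,1)}\cdots$, the event $D_i \ge k$ is (up to the additive $-j$ corrections handled exactly as in the proof of Theorem~\ref{thm:depth}) essentially the event that $\sum_{\ell=0}^{k-1} -\log X_{L(i,\ell)} < \log i$, i.e. that a random walk with i.i.d.\ increments distributed as $-\log X$ stays below $\log i$ for $k$ steps. By Cram\'er's theorem, $\pr{\sum_{\ell=0}^{k-1}(-\log X_\ell) < \log i} = \exp{-k\,\Lambda^*(-\log i / k)(1+o(1))}$ when $k \sim c\log i$; this makes $\Psi(c) = c\,\Lambda^*(-1/c)$ appear as the exponential rate, and gives $\ex{N_n(c)} = n^{1 - \Psi(c) + o(1)}$. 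The crossover $\Psi(c) = 1$ then pins $\alpha_{\max}$, with the constraint $c > 1/\mu$ coming from the requirement that the walk be below its mean drift so that the optimizing $\lambda$ has the right sign (this is exactly what Proposition~\ref{prop:psi} in the appendix controls).

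For the upper bound $H_n \le (\alpha_{\max} + \eps)\log n$ eventually a.s., I would take $c = \alpha_{\max} + \eps$, so $\Psi(c) > 1$ and $\ex{N_n(c)} \le n^{1 - \Psi(c) + o(1)} \to 0$; a union bound over $i \le n$ together with a careful treatment of the $-j$ corrections (which only matter when $j$ is comparable to $i$, an event of negligible probability, handled as in Theorem~\ref{thm:depth}) gives $\pr{H_n \ge c\log n} \to 0$. To upgrade this to almost sure convergence one uses monotonicity-type arguments along the subsequence $n_m = 2^m$ together with the fact that adding nodes only increases the height, controlling $H_n$ for $n \in [n_m, n_{m+1}]$ by $H_{n_{m+1}}$; summability of $\pr{H_{n_m} \ge (\alpha_{\max}+\eps)\log n_m}$ and Borel--Cantelli finish it. The convergence of $\ex{H_n}/\log n$ then follows from the a.s.\ convergence plus a crude deterministic bound $H_n \le n$ and uniform integrability, or more simply from $\ex{H_n} \le c\log n + n\,\pr{H_n \ge c\log n}$.

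For the lower bound I would show that for $c = \alpha_{\max} - \eps$ (so $\Psi(c) < 1$, hence $\ex{N_n(c)} = n^{1-\Psi(c)+o(1)} \to \infty$) one has $N_n(c) > 0$ with probability $\to 1$, by the second moment method: $\pr{N_n(c) = 0} \le \var{N_n(c)}/\ex{N_n(c)}^2$. The main obstacle — and where the bulk of the work lies — is the variance estimate, because the depths $D_i$ and $D_j$ of two different nodes are \emph{not} independent: their paths to the root coalesce as soon as $L(i,\cdot)$ and $L(j,\cdot)$ hit a common label. I would decompose $\ex{N_n(c)^2} = \sum_{i,j}\pr{D_i \ge c\log n,\, D_j \ge c\log n}$ according to the (random) time the two paths meet: before coalescence the increments along the two paths are genuinely independent, after coalescence they share a common tail. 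This is precisely the step where the density hypothesis on $X$ is used — it guarantees that the two floor-valued paths, before meeting, behave like independent random walks with enough regularity that the probability of coalescing early (which would create strong correlations) is small, essentially because two independent copies of the renewal chain started from distinct labels are unlikely to collide until they are both close to the root. Quantifying this and showing the correlated contribution is $o(\ex{N_n(c)}^2)$ is the technical heart of the argument; granting it, one gets $N_n(c) > 0$ w.h.p., hence $H_n \ge (\alpha_{\max}-\eps)\log n$ w.h.p., and a Borel--Cantelli argument along $n_m = 2^m$ (now using that the events are, for this direction, increasing-enough in $n$) promotes it to the almost sure statement.
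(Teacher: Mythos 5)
The upper bound, the almost-sure upgrade via Borel--Cantelli along a subsequence using monotonicity of $H_n$, and the convergence of $\ex{H_n}/\log n$ all match the paper's argument (Lemma \ref{lem:upper}, Proposition \ref{prop:almostsure}, Theorem \ref{thm:expheight}) and are fine. The problem is the lower bound: you correctly identify the variance estimate as ``the technical heart of the argument,'' but you then write ``granting it'' and supply no mechanism for it. This is a genuine gap, not a routine omission, because the naive second moment on the unrestricted count $N_n(c)=\#\{i: D_i\ge c\log n\}$ does not obviously close. Your stated reason for smallness of correlations --- that two paths ``are unlikely to collide until they are both close to the root'' --- is exactly the point that fails to be cheap: the event $\{D_i\ge c\log n\}$ does not prevent the path from $i$ from dropping to small labels early, and once a path label $u$ is small, the per-step collision probability $\pr{\floor{uX_u}=w}\le \kappa/u$ is no longer small, so the correlated contribution cannot be dismissed without an additional idea that trades early drops against the cost of the remaining long path. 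The paper's way around this is to replace $\{D_x \ge t\}$ by the restricted events $A_{x,t}(\beta)=\event{L(x,s)\ge n\beta^s,\ 1\le s\le t}$ of \eqref{eq:eventa}, which force the labels to stay large at every step so that all collision probabilities are at most $(t+1)\kappa/(n\beta^s)$, and then to show via the cyclic rotation lemma (Lemma \ref{lem:permute}, after Andersen--Dwass) that this restriction only costs a factor $1/t$, so $\pr{A_{x,t}}$ still has the exponential rate $\Psi(c)$ given by Cram\'er's theorem (Lemma \ref{lem:proba}). Together with the conditional-independence claim in Lemma \ref{lem:jointproba}, this is what makes $\sum_{x\ne y}\pr{A_{x,t}\cap A_{y,t}}\le (1+o(1))\bigl(\sum_x\pr{A_{x,t}}\bigr)^2$ provable. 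None of these devices appear in your proposal, and without them (or a substitute) the step you grant is precisely the open part of the proof.

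Two smaller points. First, the collision bound $\kappa/u$ needs a \emph{bounded} density; the theorem only assumes a density, and the paper bridges this by truncating where the density exceeds a level $b$ and moving that mass to an atom at $0$ (Lemma \ref{lem:unbounded}), then using stochastic domination $X_\delta\le X$ and continuity of $\Lambda^*$ to transfer the lower bound back --- your proposal does not address unbounded densities at all. Second, your phrase that the lower-bound events are ``increasing-enough in $n$'' for Borel--Cantelli is not needed and not quite the right mechanism: what the paper uses is the quantitative bound $\pr{H_{3n}\ge (1-\eps)c\log n}\ge 1-\Ord{n^{-\eps/4}}$ from \eqref{eq:probheightlow} together with monotonicity of $n\mapsto H_n$ inside blocks $[\gamma^{k^2},\gamma^{(k+1)^2}]$ (Proposition \ref{prop:almostsure}); to run that argument you must first have polynomial (or at least $\Ord{1/\log n}$) error bounds, which again come out of the restricted-event second moment you have not established.
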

\begin{remark}
It is worth observing that if $X$ is not constant and $\mu = +\infty$, then $D_n = o(\log n)$ in probability as shown in Theorem \ref{thm:depth}, whereas $H_n = \Theta(\log n)$ in probability. If $X = \alpha \in (0,1)$ with probability $1$, then $\alpha_{\max} = 1/\mu = - 1/\log \alpha$ and it is easy to see that the results of the theorem also hold in this case.
\end{remark}

We start by proving convergence in probability of $\frac{H_n}{\log n}$ in
Sections \ref{sec:heightup} and \ref{sec:heightlow} in the case of
a bounded density. Section \ref{sec:heightup} gives an upper bound
for $\frac{H_n}{\log n}$ with no condition on $X$. The lower bound we
present in Section \ref{sec:heightlow} is more involved and uses
an upper bound on the density in order to bound the dependence
between different paths. In Section \ref{sec:unbounded}, we show
that the lower bound still holds if $X$ has an unbounded density.
Finally, Section \ref{sec:convergence} is devoted to proving
almost sure convergence and convergence in mean as stated in the above theorem.}



\subsection{The height of the tree: upper bound}\label{sec:heightup}
Based on the bounding techniques of \citet{Che52} and \citet{Hoe63}
we can prove the following result.
\begin{lemma}\label{lem:upper}
For any $c>\alpha_{\max}$, we have $\pr{H_n \geq c \log n} \to 0.$
\end{lemma}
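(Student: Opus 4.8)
The plan is to use a first-moment (union bound) argument on the number of nodes at depth $\geq c\log n$. Fix $c > \alpha_{\max}$. By definition of $\alpha_{\max}$ in \eqref{eq:alphamax}, we have $c > 1/\mu$ and $\Psi(c) = c\,\leg{-1/c} > 1$. The event $[H_n \geq c\log n]$ is the union over $i \in \{1,\dots,n\}$ of the events $[D_i \geq c\log n]$, so it suffices to show $\sum_{i=1}^n \pr{D_i \geq c\log n} \to 0$, and since $\pr{D_i \geq c\log n} \leq \pr{D_n \geq c\log n}$ is essentially monotone in $i$ (the path from a larger label is stochastically longer), it is enough to prove $n\cdot \pr{D_n \geq c\log n} \to 0$.

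The key step is to bound $\pr{D_n \geq c\log n}$. As in the proof of Theorem~\ref{thm:depth}, use the lower bound $D_n \geq \min\{j : \sum_{i=0}^{j-1}-\log X_{L(n,i)} > \log n - \log j\}$; but here we want an upper bound on the probability that this is large, so instead I would use the cleaner estimate that $D_n \geq m$ forces $L(n, m-1) \geq 1$, hence (using $\floor{\cdot} \geq$ the argument minus $1$ at each of at most $m$ steps, or more simply $L(n,j) \leq nX_nX_{L(n,1)}\cdots X_{L(n,j-1)}$) that $nX_nX_{L(n,1)}\cdots X_{L(n,m-2)} \geq 1$, i.e. $\sum_{i=0}^{m-2} -\log X_{L(n,i)} \leq \log n$. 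Since the $X_{L(n,i)}$ encountered along the path are i.i.d.\ copies of $X$ (the renewal structure used already in Section~\ref{sec:depth}), this says $S_{m-1} \leq \log n$ where $S_k = Z_1 + \cdots + Z_k$ with $Z_i \eqlaw -\log X$. Taking $m = \ceil{c\log n}$ and applying the Chernoff bound for $\pr{S_{m-1} \leq \log n}$: for any $\lambda \leq 0$,
\[
\pr{S_{m-1} \leq \log n} = \pr{\lambda S_{m-1} \geq \lambda \log n} \leq e^{-\lambda\log n}\,\ex{e^{\lambda Z}}^{m-1} = \exp{-\lambda\log n + (m-1)\Lambda(\lambda)},
\]
recalling $\Lambda(\lambda) = \log\ex{X^{-\lambda}}$... wait, with $Z = -\log X$ we have $\ex{e^{\lambda Z}} = \ex{X^{-\lambda}}$; to match the paper's convention $\Lambda(\lambda) = \log\ex{X^\lambda}$ one writes $\ex{e^{\lambda Z}} = e^{\Lambda(-\lambda)}$. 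Optimizing over $\lambda \leq 0$ and using $m-1 \sim c\log n$ gives, for any $\delta > 0$ and $n$ large, $\pr{S_{m-1}\leq\log n} \leq \exp{-(c\,\leg{-1/c} - \delta)\log n} = n^{-\Psi(c)+\delta}$, where I am invoking the identity $\inf_{\lambda\le 0}\{-\lambda t + c\Lambda(-\lambda)\} = -c\,\Lambda^*(-t/c)$ for the right range of $t$; here $t/c = \log n/(c\log n) = 1/c$, and the relevant branch ($z = -1/c \le \ex{\log X}$, which holds since $c > 1/\mu \geq -1/\ex{\log X}$... I should double-check this sign, but it is exactly the branch statement recorded after \eqref{eq:lambda}).

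Combining, $n\cdot\pr{D_n \geq c\log n} \leq n\cdot n^{-\Psi(c)+\delta} = n^{1 - \Psi(c) + \delta}$, which tends to $0$ once $\delta$ is chosen smaller than $\Psi(c) - 1 > 0$. This completes the argument. The main obstacle — and the reason some care is needed — is twofold: first, justifying rigorously that the multipliers $X_{L(n,0)}, X_{L(n,1)}, \dots$ along the root-path are genuinely i.i.d.\ (this is the renewal-process observation from Section~\ref{sec:depth}, where one notes that conditioning on the sequence of labels encountered does not bias the $X$-values attached to them, since each $X_j$ was drawn independently and is used exactly once on the path); and second, handling the floor functions and the union over all $i\leq n$ cleanly, but the floor only costs an additive $j \leq c\log n$ term which is negligible against the multiplicative $\log n$ scale (exactly the $\log j$ correction already handled in the proof of Theorem~\ref{thm:depth}), and the union bound is absorbed by the single extra factor of $n$. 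No condition on the density of $X$ is used anywhere, consistent with the remark in Section~\ref{sec:heightup}.
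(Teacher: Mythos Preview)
Your argument is essentially the paper's: bound $\pr{D_n > t}$ by $\pr{nX_{L(n,0)}\cdots X_{L(n,t-1)} \geq 1}$, apply Markov/Chernoff to the product and optimize to obtain $n^{-\Psi(c)}$, then take a union bound over $i\le n$ so that the extra factor $n$ is killed by $\Psi(c)>1$. One cosmetic remark: your branch check is inverted --- since $c>1/\mu$ gives $-1/c>-\mu=\ex{\log X}$, the relevant representation is $\Lambda^*(z)=\sup_{\lambda\ge 0}\{\lambda z-\Lambda(\lambda)\}$, which is precisely what your substitution $\lambda'=-\lambda$ produces --- and the paper sidesteps your $\delta$-slack by using $\Lambda(\lambda)\le 0$ to absorb the ceiling (proving $\pr{H_n\ge c\log n+2}\to 0$ directly), but neither point affects correctness.
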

\begin{proof}
To simplify the notation, we prove  $\pr{H_n \geq c \log n + 2}
\to 0$ for all $c > \alpha_{\max}$, which is an equivalent
statement. For $t \geq 1$,  applying Markov's inequality, we get
\begin{align*}
\pr{D_n > t} &\leq \pr{ nX_{n} \dots X_{L(n,t-1)} \geq 1 } \\
    &\leq \inf_{\lambda \geq 0} n^\lambda \ex{ X_n^\lambda \dots X_{L(n,t-1)}^\lambda } \\
    &= \inf_{\lambda \geq 0} n^{\lambda} \ex{X^\lambda}^t \\
    &= \inf_{\lambda \geq 0} \exp{ \lambda\log n + \Lambda(\lambda)t \Big.}
\end{align*}
Setting $t=\ceil{c\log n}$, we obtain
\begin{align}\label{eq:npsi}
\pr{D_n \geq c \log n + 2}
    &\leq \inf_{\lambda \geq 0} \exp{ \lambda\log n + \Lambda(\lambda) c \log n  \Big.},
        &\text{(as $\Lambda(\lambda) \leq 0$)} \notag \\
    &\leq \exp{ -\sup_{\lambda \geq 0} \left\{-\frac{\lambda}{c}  - \Lambda(\lambda)\right\} c\log n} \notag \\
    &= \exp{ -c \leg{-1/c} \log n } \notag \\
    &= n^{-\Psi(c)}.
\end{align}

When $\Psi(c) > 0$, the bound in \eqref{eq:npsi} goes to $0$.
Recalling that $c > \alpha_{\max}$ and the definition of
$\alpha_{\max}$ (equation \eqref{eq:alphamax}), we obtain $\Psi(c)
> 1$. Applying a union bound, we get
\begin{align}\label{eq:probheightup}
\pr{H_n > t} = \pr{ \max_{1 \leq i \leq n} D_i > t }
    &\leq \sum_{i=1}^n \pr{ D_i > t } \\
    &\leq  n \pr{ D_n > t } \notag \\
    &\leq n^{1-\Psi(c)} \to 0 \notag
\end{align}
as $n \to \infty$. Note that the last inequality holds because
$\floor{\dots\floor{\floor{iX_i}X_{L(i,1)}}\dots X_{L(i,t-1)}}$ is
stochastically smaller than
$\floor{\dots\floor{\floor{nX_n}X_{L(n,1)}}\dots X_{L(n,t-1)}}$
for $i \leq n$ as the sequence $(X_i)$ is i.i.d.
\end{proof}

In the next section we prove a lower bound on the height of the
tree. We show that for any $c < \alpha_{\max}$, there exists a
node of depth larger than $c \log n$.


\subsection{The height of the tree: lower bound}\label{sec:heightlow}\ \\

\noindent \textbf{Overview of the proof.} It is worth observing first that
the upper bound (Lemma \ref{lem:upper}) does not take into account
the structure of the tree in any way. Introduce the events
$A_x=\event{D_x \geq (\alpha_{\max}-\eps)\log n}$ where $\eps \in
(0, \alpha_{\max})$. We omit the dependence in $\eps$ in this
overview. Applying a second moment inequality sometimes called the
Chung-Erd\H{o}s inequality \citep{CE52}, we get
\begin{equation}\label{eq:chungerdos}
\pr{\bigcup_{x=1}^n A_x}
    \geq \frac{\big( \sum_{x=1}^n \pr{A_x}\big)^2 }
    {\sum_{x\neq y} \pr{A_x \cap A_x} + \sum_{x=1}^n \pr{A_x} }.
\end{equation}
It is not hard to show that $\sum_{x=1}^n \pr{A_x}\to +\infty$ as $n \to \infty.$
Hence, showing that 
\[
\sum_{x \neq y} \pr{A_x \cap A_y} \sim \sum_{x \neq y} \pr{A_x} \pr{A_y} 
\]
would imply that the right hand side of \eqref{eq:chungerdos} goes to $1$. 
This would prove the lower bound on the height that we seek. Therefore, 
our objective is to prove that the collisions between branches of the tree 
--- that are responsible for the dependence between $A_x$ and $A_y$ --- 
do not influence the joint probabilities $\pr{A_x \cap A_y}$ by much.
In order to be able to control the collision probabilities, we add
some restrictions to the event $A_x$. Instead of only looking for
long paths in the tree, we look for paths that maintain large
enough labels at each step. See equation \eqref{eq:eventa} for a
definition.
The probability of such an event can be bounded (Lemma
\ref{lem:permute}) using a rotation
argument introduced by \citet{And53} and \citet{Dwa69} and used in
the context of random trees by \citet{DR95}.

To simplify the presentation, the proof is carried out first for the case
where $X$ has a bounded density and possibly a mass at $0$, i.e.,
\begin{equation}\label{eq:densityatom}
X = \left\{ \begin{array}{ll}
\wt{X} & \textrm{with probability $1-p$}\\
0 & \textrm{with probability $p$},
\end{array} \right.
\end{equation}
where $\law(\wt{X})$ has a bounded density on $(0,1)$ and $p \in [0,1]$.
The reason we allow $X$ to have an atom at $0$ is to later handle
attachment distributions having unbounded densities (Theorem
\ref{thm:heightunbounded}).


\noindent \textbf{Preliminary lemmas.}
We begin by stating precise bounds on the probabilities of events of the form
$\event{X_1 \cdots X_t \geq b}$.

\begin{proposition}[\citet{Cra38}, see also \citet{DZ98}, chapter 2, page 27] \label{prop:cramer}\mbox{}
Let $Y_1, \dots, Y_t$ be a sequence of independent real random
variables distributed as $Y$ and having a well-defined expected
value $\ex{Y} \in \R \cup \{\pm \infty\}$. For any constant $a \in
\R$, we have
\begin{align*}
\pr{ Y_1 + \dots + Y_t \geq t a } &= \exp{-t \Lambda^*_Y(a) + o(t) \Big.}
    &\quad\text{if $a \geq \ex{Y}$ and $\ex{Y} \neq +\infty$}, \\
\pr{ Y_1 + \dots + Y_t \leq t a }  &= \exp{-t \Lambda^*_Y(a) + o(t) \Big.}
    &\quad\text{if $a \leq \ex{Y}$ and $\ex{Y} \neq -\infty$},
\end{align*}
where $\Lambda^*_Y$ is as defined in equation \eqref{eq:lambdaY}.
\end{proposition}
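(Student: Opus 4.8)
By the substitution $Y \mapsto -Y$ the two displayed asymptotics are equivalent (this swaps $a$ with $-a$ and $\ex Y$ with $-\ex Y$, while $\Lambda^*_{-Y}(-a) = \Lambda^*_Y(a)$), so I would prove only the first, assuming $a \ge \ex Y$ and $\ex Y \ne +\infty$; throughout, recall that in this regime $\Lambda^*_Y(a) = \sup_{\lambda \ge 0}\{\lambda a - \Lambda_Y(\lambda)\}$ by convexity, exactly as in the fact recalled after \eqref{eq:lambda} and proved in Appendix \ref{sec:applambda}. The plan is to establish matching exponential upper and lower bounds for $\pr{S_t \ge ta}$, where $S_t = Y_1 + \dots + Y_t$; the lower bound is where the real work lies. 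As a preliminary observation, independence gives $\pr{S_{t+s} \ge (t+s)a} \ge \pr{S_t \ge ta}\,\pr{S_s \ge sa}$, so $t \mapsto -\log\pr{S_t \ge ta}$ is subadditive and Fekete's lemma already guarantees that $\tfrac1t\log\pr{S_t \ge ta}$ converges in $[-\infty,0]$; it then only remains to identify the limit as $-\Lambda^*_Y(a)$.

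The upper bound is the Chernoff--Markov inequality, in the same spirit as the proof of Lemma \ref{lem:upper}: for every $\lambda \ge 0$,
\[
\pr{S_t \ge ta} \le e^{-\lambda t a}\,\ex{e^{\lambda S_t}} = e^{-\lambda t a}\,\ex{e^{\lambda Y}}^{t} = e^{-t(\lambda a - \Lambda_Y(\lambda))},
\]
and taking the infimum over $\lambda \ge 0$ yields $\pr{S_t \ge ta} \le e^{-t\Lambda^*_Y(a)}$ --- even a little sharper than the claimed estimate, with no $o(t)$ term needed.

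The core of the argument is the lower bound $\liminf_{t\to\infty}\tfrac1t\log\pr{S_t \ge ta} \ge -\Lambda^*_Y(a)$, which I would obtain by an exponential change of measure (``tilting''). Assume first that $a$ lies in the interior of $\{z : \Lambda^*_Y(z) < \infty\}$, so that for each $a''$ slightly larger than $a$ one can choose $a' \in (a,a'')$ and $\lambda' \ge 0$ with $\Lambda'_Y(\lambda') = a'$ and $\Lambda^*_Y(a') = \lambda' a' - \Lambda_Y(\lambda')$. Introduce the tilted law $\wt P$ with $\tfrac{d\wt P}{dP}(y) = e^{\lambda' y - \Lambda_Y(\lambda')}$; under $\wt P$ the $Y_i$ are i.i.d.\ with mean $\Lambda'_Y(\lambda') = a'$. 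Changing measure and using $\lambda' \ge 0$ on the event $\{S_t < ta''\}$,
\begin{align*}
\pr{ta \le S_t < ta''}
  &= e^{t\Lambda_Y(\lambda')}\,\mathbf{E}_{\wt P}\!\left\{ \ind{[ta \le S_t < ta'']}\, e^{-\lambda' S_t} \right\} \\
  &\ge e^{t\Lambda_Y(\lambda')}\, e^{-\lambda' t a''}\, \mathbf{P}_{\wt P}\!\left\{ a \le \tfrac{S_t}{t} < a'' \right\}.
\end{align*}
Since $S_t/t \to a' \in (a,a'')$ in $\wt P$-probability by the weak law of large numbers, the last factor tends to $1$, so $\liminf_t \tfrac1t\log\pr{S_t \ge ta} \ge \Lambda_Y(\lambda') - \lambda' a'' = -\Lambda^*_Y(a') - \lambda'(a''-a')$; letting $a'' \downarrow a'$ and then $a' \downarrow a$ (so that $\lambda' \to \lambda^*$ and $\Lambda^*_Y(a') \to \Lambda^*_Y(a)$ by continuity of $\Lambda^*_Y$ on the interior of its domain) yields the claim.

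It then remains to handle the degenerate cases, all of them classical. If $a = \ex Y$ then $\Lambda^*_Y(a) = 0$ and it suffices that $\liminf_t\pr{S_t \ge ta} > 0$, which follows from the central limit theorem when $\var{Y} < \infty$ and more generally whenever $\ex Y$ is finite. If $a$ is beyond the range of $\Lambda'_Y$ (the ``non-steep'' case, including $a \ge \operatorname{ess\,sup} Y$), or if $Y = -\infty$ with positive probability --- which occurs in our application, where $Y = \log X$ and $X$ may have an atom at $0$, and makes $\Lambda_Y$ discontinuous at $0$ --- I would reduce to the previous paragraph by truncating $Y$ at a large level and letting that level grow, or by a direct estimate (for instance $\pr{S_t \ge ta} = \pr{Y_1 = \dots = Y_t = a}$ when $a = \operatorname{ess\,sup} Y$). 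I expect this last bookkeeping to be the main obstacle: making the lower bound uniform across the boundary of the effective domain of $\Lambda_Y$, the infinite-variance regime, and possible mass at $-\infty$. This is also why, within the paper, it is most economical simply to quote the result in the sharp form established by \citet{Cra38} and \citet{DZ98}.
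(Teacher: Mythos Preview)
The paper does not prove Proposition~\ref{prop:cramer} at all: it is stated with attribution to \citet{Cra38} and \citet{DZ98} and used as a black box, so there is no in-paper argument to compare against. Your sketch is the standard proof --- Chernoff upper bound plus exponential tilting for the lower bound, with the usual bookkeeping at the boundary of the effective domain --- and is correct; as you yourself observe in the last sentence, the paper's own route is simply to quote the result.
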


Before stating the corollary that we need, we define the rate
function $\Lambda^*$ for a random variable $\log X$ that has an
atom at $-\infty$. The function $\varphi :\lambda \mapsto \lambda
z - \log \ex{e^{\lambda \log X}}$ is well defined for $\lambda >
0$. We extend it for $\lambda = 0$ by $\varphi(0) = -
\log(1-\pr{\log X = -\infty})$. Then, $\Lambda^*$ is defined by
\begin{equation}\label{eq:lambdainfdef}
\Lambda^*(z) = \sup_{\lambda \geq 0} \{\varphi(\lambda)\}
\end{equation}
for all real $z \geq \ex{\log X}$. Note that this definition
coincides with the definition given in \eqref{eq:lambda} if $\pr{X
= 0} = 0$.
\begin{corollary}\label{cor:cramerup}
Let $X$ have an atom at $0$ with mass $p$ and any distribution on
$(0,1)$ with total mass $1-p$.  Let $X_1,\dots, X_t$ be i.i.d.\
random variables distributed as $X$. Then,
\[
\pr{ X_1 \cdots X_t \geq e^{ta} } = \exp{-t \leg{a} + o(t) \Big.}\quad
\left\{ \begin{array}{ll}
\text{for $a \geq \ex{\log X}$} & \text{if $\ex{\log X} > -\infty$}\\
\text{for $a \in \R$} & \text{if $\ex{\log X} = -\infty$}.
\end{array} \right.
\]
\end{corollary}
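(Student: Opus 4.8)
The plan is to reduce the statement directly to Cram\'er's theorem (Proposition \ref{prop:cramer}) applied to the random variable $Y = \log X$, which takes values in $[-\infty, 0)$. First I would rewrite the event in additive form: since $\log$ is increasing, $X_1 \cdots X_t \geq e^{ta}$ is exactly the event $\log X_1 + \dots + \log X_t \geq ta$, where each $\log X_i$ is an i.i.d.\ copy of $Y = \log X$. So the claim is precisely $\pr{Y_1 + \dots + Y_t \geq ta} = \exp{-t\Lambda^*(a) + o(t)}$, with $\Lambda^* = \Lambda^*_Y$. When $\pr{X=0}=0$, i.e.\ $Y$ is a.s.\ finite, this is an immediate application of the first display in Proposition \ref{prop:cramer} (valid for $a \geq \ex{Y}$, and $\ex{Y} = \ex{\log X} \neq +\infty$ automatically since $Y < 0$), and here $\Lambda^*$ in the sense of \eqref{eq:lambda} agrees with $\Lambda^*_Y$ in \eqref{eq:lambdaY}. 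The case $\ex{\log X} = -\infty$ with no atom at $0$ is also covered, and the ``for $a \in \R$'' range is exactly what the proposition gives when $\ex{Y} = -\infty$ (the condition $a \geq \ex{Y}$ becomes vacuous).

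The genuine content is the case $p = \pr{X = 0} > 0$, where $Y$ has an atom at $-\infty$ and Proposition \ref{prop:cramer} does not literally apply. Here I would split on how many of the $X_i$ vanish. Let $N = \#\{i \leq t : X_i = 0\}$; conditioned on $N = k < t$, the event $X_1 \cdots X_t \geq e^{ta}$ forces $N = 0$, since a single zero factor makes the product zero (and $e^{ta} > 0$). Hence
\[
\pr{X_1 \cdots X_t \geq e^{ta}} = \pr{N = 0}\,\pr{X_1 \cdots X_t \geq e^{ta} \mid N = 0} = (1-p)^t \, \pr{W_1 + \dots + W_t \geq ta},
\]
where $W_1, \dots, W_t$ are i.i.d.\ copies of $\log X$ conditioned on $X > 0$, which is a finite-valued random variable. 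Now apply Proposition \ref{prop:cramer} to the $W_i$: this gives $\pr{\sum W_i \geq ta} = \exp{-t\Lambda^*_W(a) + o(t)}$ for $a \geq \ex{W}$ (or all $a$ if $\ex{W} = -\infty$). Combining, $\pr{X_1 \cdots X_t \geq e^{ta}} = \exp{-t(\Lambda^*_W(a) - \log(1-p)) + o(t)}$, so it remains to check that $\Lambda^*_W(a) - \log(1-p) = \Lambda^*(a)$ with $\Lambda^*$ as defined in \eqref{eq:lambdainfdef}. This is a short convex-duality computation: for $\lambda > 0$ one has $\ex{e^{\lambda \log X}} = \ex{X^\lambda} = (1-p)\,\ex{e^{\lambda W}}$, so $\varphi(\lambda) = \lambda a - \log\ex{X^\lambda} = (\lambda a - \Lambda_W(\lambda)) - \log(1-p)$, and at $\lambda = 0$ the extended value $\varphi(0) = -\log(1-p)$ matches $\lambda a - \Lambda_W(\lambda)$ at $\lambda = 0$ (which is $0$) shifted by $-\log(1-p)$; taking the supremum over $\lambda \geq 0$ on both sides yields $\Lambda^*(a) = \Lambda^*_W(a) - \log(1-p)$.

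I expect the only delicate point to be bookkeeping around the atom: making sure that $\Lambda^*$ in the sense of \eqref{eq:lambdainfdef} (supremum over $\lambda \geq 0$ of the extended $\varphi$) is the object that comes out of the reduction, and that restricting to $\lambda \geq 0$ rather than all $\lambda \in \R$ is legitimate for $a \geq \ex{\log X}$ — but this is exactly one of the properties of $\Lambda^*$ established in Appendix \ref{sec:applambda}, so I can cite it. The passage from $\pr{X_1 \cdots X_t \geq b}$ with $b = e^{ta}$ back to the stated form is immediate. Everything else is a direct invocation of Cram\'er's theorem, so the proof is short.
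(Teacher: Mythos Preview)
Your approach is essentially the same as the paper's: condition out the atom, reduce to the conditional variable $\wt X$ (your $W$), and match the rate functions. The decomposition $\pr{\cdot} = (1-p)^t \pr{\sum W_i \geq ta}$ and the identity $\varphi(\lambda) = (\lambda a - \Lambda_W(\lambda)) - \log(1-p)$ are exactly what the paper uses.

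There is one case you have not addressed. When $p > 0$ we automatically have $\ex{\log X} = -\infty$, so the corollary claims the asymptotic for \emph{all} $a \in \R$. But your invocation of Cram\'er on the $W_i$ only covers $a \geq \ex{W}$ (or all $a$ if $\ex{W} = -\infty$). If the conditional variable has $\ex{W} > -\infty$, then for $a < \ex{W}$ the first line of Proposition~\ref{prop:cramer} does not apply, and the step ``$\pr{\sum W_i \geq ta} = \exp{-t\Lambda^*_W(a) + o(t)}$'' is unjustified in that range. The paper handles this regime separately: by the law of large numbers $\pr{\sum_{i} \log \wt X_i \geq ta} \to 1$, so the product probability is $(1-p)^t(1 - o(1)) = \exp{t\log(1-p) + o(t)}$, and one then checks that $\Lambda^*(a) = -\log(1-p)$ for $a \leq \ex{W}$ (the supremum in \eqref{eq:lambdainfdef} is attained at $\lambda = 0$). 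This also means your identity $\Lambda^*(a) = \Lambda^*_W(a) - \log(1-p)$ must be read with $\Lambda^*_W$ restricted to $\lambda \geq 0$; for $a < \ex{W}$ the unrestricted $\Lambda^*_W(a)$ can be strictly positive, and the identity as you wrote it would be false.

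In short: add the LLN paragraph for $p > 0$, $\ex{W}$ finite, $a < \ex{W}$, and state the rate-function identity with the case split, and your proof matches the paper's.
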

\begin{proof}
First if $p = 0$, we can apply Cram\'{e}r's theorem to $\log X$
and get the desired result. In what follows, assume $p > 0$ so
that $\log X = -\infty$ with positive probability. Let $t > 0$ be
integer, and let $\wt{X}_1, \ldots, \wt{X}_t$ be $t$ independent
random variables having the distribution of $X$ conditioned in $X
> 0$. If any $X_i=0$, $1\leq i \leq t$, then the product
$X_1\cdots X_t = 0$, and thus
\begin{align*}
\pr{ X_1 \cdots X_t \geq e^{ta} }
    &= (1-p)^t\; \pr{ \wt{X}_1 \cdots \wt{X}_t \geq e^{ta} } \\
    &= (1-p)^t\; \pr{ \log \wt{X}_1 + \dots + \log \wt{X}_t \geq ta }.
\end{align*}
For $a \geq \ex{\log \wt{X}}$, we get
\begin{align*}
\pr{ X_1 \cdots X_t \geq e^{ta} }
    &= (1-p)^t\; \exp{-t \Lambda^*_{\log \wt{X}}(a) + o(t) \Big.} \\
    &= \exp{-t \left(\Lambda^*_{\log \wt{X}}(a) - \log(1-p) \right) + o(t) \Big.}.
\end{align*}
Then, assume $\ex{\log \wt{X}} > -\infty$ and $a < \ex{\log
\wt{X}}$. Using the law of large numbers for $\log X$, we get
\[
\lim_{t \to \infty} \pr{ X_1 \cdots X_t \geq e^{ta} } = 1.
\]
Thus,
\[
(1-p)^t\left(1-o(1) \right) \leq \pr{ X_1 \cdots X_t \geq e^{ta} } \leq (1-p)^t
\]
which implies
\[
\pr{ X_1 \cdots X_t \geq e^{ta} } = \exp{t \log(1-p) + o(t)}.
\]
It only remains to show that
\begin{equation}\label{eq:lambdainf}
\Lambda^*(z) = \left\{ \begin{array}{ll}
\Lambda^*_{\log \wt{X}}(z) - \log(1-p) & \textrm{for $z \geq \ex{\log \wt{X} } $}\\
-\log(1-p) & \textrm{for $z \leq \ex{\log \wt{X}}$}.
\end{array} \right.
\end{equation}
Let $U$ be a random variable uniformly distributed on $(0,1)$ and
independent from $X$ and $\wt{X}$. Consider the event
$A=\event{U\leq p}$. Then $X\eqlaw 0\ind{A} + \wt{X}\ind{A^c}$.
Thus, for $z \geq \ex{\log X}$, we have
\begin{align*}
\sup_{\lambda > 0} \left\{\lambda z - \log \ex{X^{\lambda}} \right\}
&= \sup_{\lambda > 0} \left\{\lambda z - \log \ex{(0\ind{A} + \wt{X}\ind{A^c})^{\lambda}} \right\} \\
&= \sup_{\lambda > 0} \left\{\lambda z - \log \ex{\wt{X}^{\lambda}\ind{A^c} } \right\} \\
&= \sup_{\lambda > 0} \left\{\lambda z - \log\left(\ex{\wt{X}^{\lambda}} \ex{\ind{A^c}}\right) \right\} \\
&= \sup_{\lambda > 0} \left\{\lambda z - \log \left( \ex{\wt{X}^{\lambda}} (1-p) \right) \right\} \\
&= \sup_{\lambda > 0} \left\{\lambda z - \log \ex{\wt{X}^{\lambda}} \right\} - \log(1-p).
\end{align*}
As a result, using the definition \eqref{eq:lambdainfdef}, we obtain
\begin{align*}
\Lambda^*(z) &= \max \left\{ \sup_{\lambda > 0} \left\{\lambda z - \log \ex{\wt{X}^{\lambda}} \right\} - \log(1-p), -\log(1-p) \right\} \\
            &= \sup_{\lambda \geq 0} \left\{\lambda z - \log \ex{\wt{X}^{\lambda}} \right\} - \log(1-p).
\end{align*}
which matches the expression \eqref{eq:lambdainf} using Proposition \ref{prop:lambdastar}.
\end{proof}

The next lemma is based on a rotation argument introduced by \citet{And53} and \citet{Dwa69}.

\begin{lemma}\label{lem:permute}
Let $t$ be a positive integer, let $\beta > 0$, and let $X_1,
\dots, X_t$ be a sequence of non-negative independent and
identically distributed random variables. Then
\[
\pr{X_1 \geq \beta, X_1X_2 \geq \beta^{2}, \dots, X_1 \cdots X_t \geq \beta^{t} }
\geq \frac{1}{t} \pr{ X_1 \cdots X_t \geq \beta^{t} }.
\]
\end{lemma}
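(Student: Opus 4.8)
The plan is to use the classical cycle-lemma / rotation argument. Fix a realization $(x_1,\dots,x_t)$ of $(X_1,\dots,X_t)$, and for each cyclic shift $\sigma^k$ (where $\sigma^k(x_1,\dots,x_t) = (x_{k+1},\dots,x_t,x_1,\dots,x_k)$, indices mod $t$) consider the event that the shifted sequence satisfies the running-product constraint $x_{k+1}\cdots x_{k+i} \geq \beta^{i}$ for all $1 \leq i \leq t$. Passing to logarithms, write $s_i = \log x_i - \log\beta$ and let $S_i = s_1 + \cdots + s_i$ be the partial sums (with $S_0 = 0$). The event ``$X_1 \cdots X_i \geq \beta^i$ for all $i$'' becomes ``$S_i \geq 0$ for all $1 \leq i \leq t$'', i.e. the path $(S_0, S_1, \dots, S_t)$ never goes strictly below its starting level after step $0$. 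The key combinatorial fact is: \emph{whenever $S_t \geq 0$, at least one of the $t$ cyclic rotations of $(s_1,\dots,s_t)$ has all of its partial sums $\geq 0$.} Indeed, choosing $k$ to be (say) the largest index at which the partial sum $S_k$ attains its minimum over $0 \leq k \leq t$, the rotated path starting at that index has all partial sums $\geq 0$ precisely because $S_t - S_k \geq -S_k \geq 0$ closes the loop. (One must handle the ``$\geq$'' versus ``$>$'' carefully, but since we only need a lower bound of the form ``at least one rotation works'', taking the last minimizer suffices; any tie-breaking that produces a valid rotation is fine.)

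With this in hand, the argument runs as follows. By symmetry, for every fixed $k \in \{0,1,\dots,t-1\}$ the probability that the $k$-th cyclic rotation of $(X_1,\dots,X_t)$ has all running products $\geq \beta^i$ equals $\pr{X_1 \geq \beta, \dots, X_1\cdots X_t \geq \beta^t}$, because $(X_1,\dots,X_t)$ is exchangeable (indeed i.i.d.), so each rotation has the same joint law as the original sequence. Let $N$ denote the number of rotations $k$ that yield an all-partial-sums-$\geq 0$ path. On the event $[X_1\cdots X_t \geq \beta^t]$ we have just shown $N \geq 1$; in general $N \leq t$. Therefore
\[
\pr{X_1\cdots X_t \geq \beta^t} \leq \ex{N\,\ind{[X_1\cdots X_t \geq \beta^t]}} \leq \ex{N} = \sum_{k=0}^{t-1} \pr{\text{rotation $k$ is valid}} = t\,\pr{X_1 \geq \beta, \dots, X_1\cdots X_t \geq \beta^t},
\]
where the last equality uses exchangeability. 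Rearranging gives exactly the claimed inequality $\pr{X_1\geq\beta,\dots,X_1\cdots X_t\geq\beta^t} \geq \frac{1}{t}\pr{X_1\cdots X_t \geq \beta^t}$.

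The only real subtlety — and the step I expect to need the most care — is the combinatorial cycle lemma itself, specifically verifying that a valid rotation exists whenever $S_t \geq 0$ while respecting the non-strict inequalities, and checking it degrades gracefully when some $X_i = 0$ (so $s_i = -\infty$): in that degenerate case $X_1 \cdots X_t = 0 < \beta^t$ since $\beta > 0$, so the event $[X_1\cdots X_t \geq \beta^t]$ has probability contribution zero from such realizations and nothing needs to be checked there. Everything else — exchangeability of the rotations, the expectation bound $N \leq t$, and the final rearrangement — is routine. Note also we do not need independence beyond exchangeability, which is why the hypothesis can be stated for i.i.d.\ variables without loss.
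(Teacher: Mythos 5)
Your proposal is correct and is essentially the paper's own argument: the same Andersen--Dwass rotation (cycle) idea, picking the rotation that starts at a minimizer of the running products (the paper takes the first minimum of $Y_1\cdots Y_a$ with $Y_a = X_a/\beta$, you take the last minimizer of the log partial sums), and then using exchangeability of the $t$ rotations to conclude. Your counting-variable bound $\pr{A}\leq\ex{N}\leq t\,\pr{\text{one rotation valid}}$ is just the expectation form of the paper's union bound over the inclusion of $\event{Y_1\cdots Y_t\geq 1}$ in the union of rotated events, so the two proofs are the same in substance.
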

\begin{proof}
As $X_1, \dots, X_t$ are i.i.d., we can circularly continue the
indices: $Y_a = Y_{a+t} = \frac{X_a}{\beta}$ for all $a \in \{1,
\dots, t\}$. Then,
\begin{align*}
\pr{X_1 \geq \beta, \dots , X_1 \cdots X_t \geq \beta^t }
&= \pr{Y_1 \geq 1, \dots, Y_{1} \cdots Y_{t} \geq 1 } \\
&= \pr{Y_{a+1} \geq 1, \dots, Y_{a+1} \cdots Y_{a+t} \geq 1}
\end{align*}
for all $a \in \{1, \dots, t\}$ since the variables are i.i.d.\

Define $a \in \{1, \dots, t\}$ as the first minimum of $Y_1 \cdots
Y_a$. Then $Y_1 \cdots Y_t \geq 1$ implies that for all $b \in
\{1, \dots, t\}$,
\[
Y_{a+1} \cdots Y_{a+b} = \frac{Y_1 \cdots Y_{a+b}}{Y_1 \cdots Y_a} \geq 1.
\]
If $a+b \leq t$, the inequality holds by our choice of $a$. For
$a+b > t$, it can be seen by writing $Y_1 \cdots Y_{a+b} = Y_1
\cdots Y_t \cdot Y_{1} \cdots Y_{a+b-t}$ and using that $Y_1\cdots
Y_t \geq 1$.  Thus,
\[
\event{Y_1 \cdots Y_t \geq 1} \subseteq \bigcup_{a=1}^t \event{Y_{a+1} \geq 1, \dots, Y_{a+1} \cdots Y_{a+t} \geq 1}.
\]
So we have
\[
\pr{Y_1 \cdots Y_t \geq 1} \leq t \cdot \pr{Y_1 \geq 1, \dots, Y_1 \cdots Y_t \geq 1}. \qedhere
\]
\end{proof}


\noindent \textbf{Proof of the lower bound.}
For convenience of notation, the nodes of the tree are labeled
from $0$ to $3n$, and we shall study the height $H_{3n}$. For a
node $x \in \{2n+1, \dots, 3n\}$, $t \in \mathbb{N}$ and $0 <
\beta < 1$, define the event
\begin{equation}\label{eq:eventa}
A_{x,t}(\beta) = \event{ L(x,1) \geq n\beta, L(x,2) \geq n\beta^{2}, \dots, L(x,t) \geq n\beta^{t}}.
\end{equation}
We set $A_{x,0}(\beta)=\event{L(x,0) > n\beta^0}=\event{x > n}$ so
that $\pr{A_{x,0}(\beta)} = 1$. Note that when $\beta$ is clear
from the context, we just write $A_{x,t}$ for $A_{x,t}(\beta)$.

\begin{lemma}\label{lem:proba}
Assume $\law(X)$ is not a single mass. Let $c \in (1/\mu,
\alpha_{\max})$, $\beta = e^{-1/c}$ and $\delta > 0$ such that
$\Psi(c) + \delta < 1$ and $\Psi(c) - \delta > 0$.
Then there exists $t_0 = t_0(c, \delta, \law(X))$ such that  for
all integers $t \geq t_0$, $n \geq t \beta^{-t}$ and $2n+1 \leq x
\leq 3n$,
\[
\frac{\beta^{t}}{t} \leq \frac{\beta^{(\Psi(c)+\delta)t}}{t} \leq \pr{A_{x,t}(\beta)} \leq \beta^{(\Psi(c)-\delta) t}.
\]
\end{lemma}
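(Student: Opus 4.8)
The plan is to remove the dependence among the variables $X_{L(x,i)}$ encountered along the path of $x$, reducing $\pr{A_{x,t}(\beta)}$ to a statement about a product of $t$ genuinely independent copies of $X$, after which Cram\'er's estimate (Corollary~\ref{cor:cramerup}) controls the exponential rate and the rotation inequality (Lemma~\ref{lem:permute}) supplies the lower bound. The reduction rests on the fact that along any path toward the root the labels strictly decrease while positive, since $\floor{\ell s}\le\ell-1$ whenever $\ell\ge 1$ and $s\in[0,1)$; hence $L(x,0),L(x,1),\dots$ are pairwise distinct until the first time the path reaches $0$, and so, revealing $X_{L(x,0)},X_{L(x,1)},\dots$ in this order, each is a fresh copy of $X$ independent of those revealed before, as long as $0$ has not been reached. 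Since reaching $0$ before step $t$ forces $L(x,t)=0<n\beta^t$ (the hypothesis $n\ge t\beta^{-t}$ guarantees $n\beta^t\ge 1$), the event $A_{x,t}(\beta)$ has the same probability when the tree path is replaced by the virtual path $\ell_0=x$, $\ell_i=\floor{\ell_{i-1}\xi_i}$ driven by an i.i.d.\ sequence $\xi_1,\xi_2,\dots$ with $\xi_i\eqlaw X$:
\[
\pr{A_{x,t}(\beta)} = \pr{\ell_i \ge n\beta^i \text{ for all } 1\le i\le t}.
\]
For the upper bound, $A_{x,t}(\beta)\subseteq\event{\ell_t\ge n\beta^t}$ and $\ell_t\le x\,\xi_1\cdots\xi_t\le 3n\,\xi_1\cdots\xi_t$, so $\pr{A_{x,t}(\beta)}\le\pr{\xi_1\cdots\xi_t\ge\beta^t/3}$. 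Writing $\beta=e^{-1/c}$, for $t$ large (depending on $\eps'$) we have $\beta^t/3\ge e^{-t(1/c+\eps')}$, and since $c>1/\mu$ gives $-1/c>\ex{\log X}$ while $c<\alpha_{\max}$ puts $-1/c$ in the interior of the effective domain of $\leg{\cdot}$, where $\leg{\cdot}$ is finite and continuous (Appendix~\ref{sec:applambda}), Corollary~\ref{cor:cramerup} yields $\pr{\xi_1\cdots\xi_t\ge e^{-t(1/c+\eps')}}=\exp{-t\,\leg{-1/c-\eps'}+o(t)}$. Choosing $\eps'$ so small that $\leg{-1/c-\eps'}\ge\leg{-1/c}-\delta/(2c)$ and then $t_0$ large enough that $|o(t)|\le\delta t/(2c)$ for $t\ge t_0$, and using $\Psi(c)=c\,\leg{-1/c}$, we get $\pr{A_{x,t}(\beta)}\le e^{-t(\Psi(c)-\delta)/c}=\beta^{(\Psi(c)-\delta)t}$.

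For the lower bound, fix $\gamma\in(\beta,1)$ with $\log\gamma$ still in the domain of $\leg{\cdot}$ (again using $c<\alpha_{\max}$). If $\xi_1\cdots\xi_j\ge\gamma^j$ for all $j\le t$, then from $\ell_i\ge x\,\xi_1\cdots\xi_i-i\ge 2n\gamma^i-i$ together with $n\gamma^i\ge i$ (which follows from $n\ge t\beta^{-t}\ge i\gamma^{-i}$) and $\gamma>\beta$, we obtain $\ell_i\ge n\gamma^i\ge n\beta^i$ for every $i\le t$; hence
\[
\pr{A_{x,t}(\beta)} \ge \pr{\xi_1\cdots\xi_j\ge\gamma^j \text{ for all } 1\le j\le t} \ge \tfrac{1}{t}\,\pr{\xi_1\cdots\xi_t\ge\gamma^t}
\]
by Lemma~\ref{lem:permute}. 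Corollary~\ref{cor:cramerup} gives $\pr{\xi_1\cdots\xi_t\ge\gamma^t}=\exp{-t\,\leg{\log\gamma}+o(t)}$; taking $\gamma$ close enough to $\beta$ that $\leg{\log\gamma}\le\leg{-1/c}+\delta/(2c)$ and $t_0$ large enough to absorb the $o(t)$ into the remaining $\delta/(2c)$, we get $\pr{A_{x,t}(\beta)}\ge\tfrac{1}{t}\,e^{-t(\Psi(c)+\delta)/c}=\tfrac{1}{t}\,\beta^{(\Psi(c)+\delta)t}$, and finally $\tfrac1t\,\beta^{(\Psi(c)+\delta)t}\ge\tfrac1t\,\beta^t$ because $\Psi(c)+\delta<1$ and $\beta<1$. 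One takes $t_0$ to be the largest of the finitely many thresholds on $t$ above, all depending only on $c$, $\delta$ and $\law(X)$ (through $\eps'$, $\gamma$, and the rate of the $o(t)$), and not on $n$ or $x$.

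I expect the first step to be the delicate one: one must check that the passage from the correlated labels $X_{L(x,i)}$ to an i.i.d.\ sequence is an \emph{equality} of probabilities and not merely an inequality, which rests entirely on the strict-decrease/distinctness property of root paths together with the assumption $n\ge t\beta^{-t}$ that keeps the path from hitting $0$ inside the event. The remainder is bookkeeping — matching $\eps'$, $\gamma$ and $\delta$ and absorbing the $o(t)$ and the factor $1/t$ — but it has to be done while keeping $-1/c$ and its perturbations inside the region where $\leg{\cdot}$ is finite, which is precisely where the restriction $c<\alpha_{\max}$ enters (it rules out $-1/c$ sitting at the edge of the support of $\log X$).
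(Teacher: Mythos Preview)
Your argument is correct and follows the same route as the paper: the upper bound via $A_{x,t}\subseteq\event{L(x,t)\ge n\beta^t}$, $L(x,t)\le 3n\,\xi_1\cdots\xi_t$, and a Chernoff/Cram\'er estimate together with continuity of $\Lambda^*$ near $-1/c$; the lower bound via the inclusion $\event{\xi_1\cdots\xi_s\ge\beta^s\ \forall s\le t}\subseteq A_{x,t}$, the rotation Lemma~\ref{lem:permute}, and Corollary~\ref{cor:cramerup}. The only cosmetic difference is that the paper applies the lower-bound inclusion directly with $\beta$ (using $n\ge t\beta^{-t}$ to get $\tfrac{\beta^s}{2}+\tfrac{s}{2n}\le\beta^s$) rather than introducing your auxiliary $\gamma>\beta$, so your extra continuity step there is unnecessary; your explicit justification of the i.i.d.\ replacement for $X_{L(x,i)}$ is, on the other hand, more careful than the paper's implicit use of it.
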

\begin{proof}
First, using Proposition \ref{prop:psi} in Appendix
\ref{sec:applambda}, we know that $0<\Psi(c)<1$ for $c \in (1/\mu,
\alpha_{\max})$. So we can choose $\delta > 0$ with $\Psi(c) +
\delta < 1$ and $\Psi(c) - \delta > 0$.

We start with the upper bound. Using the same computation as in
the previous section,
\begin{align*}
\pr{L(x,t) \geq n\beta^{t}} &\leq \pr{ 3n X_{L(x,0)} \dots X_{L(x,t-1)} \geq n \beta^t} \\
    &= \pr{ 3 \beta^{-t} X_{L(x,0)} \dots X_{L(x,t-1)} \geq 1} \\
    &\leq \inf_{\lambda \geq 0} \exp{ \lambda (-t \log \beta + \log 3) + \Lambda(\lambda)t \Big.} \\
    &= \exp{- t \leg{-\frac{1}{c} - \frac{\log 3}{t}} }.
\end{align*}
By definition of $\Psi$, we have $\leg{-1/c} = \Psi(c)/c$. Thus
for $t$ large enough, by continuity of $\Lambda^*$, $\leg{-1/c -
(\log 3)/t} > (\Psi(c)-\delta)/{c}$. Thus,
\[
\pr{L(x,t) \geq n\beta^t } \leq \exp{-t(\Psi(c)-\delta)/c} = \beta^{(\Psi(c)-\delta)t}.
\]
To prove a lower bound on the probability of $A_{x,t}$, we use
that for all $s\in \{1,\dots,t\}$
\begin{align*}
\event{ L(x,s) \geq n\beta^s }
    &\supseteq \event{ 2nX_{L(x,0)}\cdots X_{L(x,s-1)} - s \geq n\beta^s } \\
    &\supseteq \event{ X_{L(x,0)}\cdots X_{L(x,s-1)} \geq \frac{\beta^s}{2} + \frac{s}{2n} } \\
    &\supseteq \event{ X_{L(x,0)}\cdots X_{L(x,s-1)} \geq \beta^s }.
\end{align*}
The last inclusion holds because we assumed $n \geq t \beta^{-t}
\geq s \beta^{-s}$ for all $s\leq t$. Thus, we write
\begin{align*}
\pr{A_{x,t}} &=  \pr{ L(x,1) \geq n\beta, L(x,2) \geq n\beta^2, \dots, L(x,t) \geq n\beta^t } \\
&\geq \pr{ X_{L(x,0)} \geq \beta, X_{L(x,0)}X_{L(x,1)} \geq \beta^2, \dots, X_{L(x,0)}\cdots X_{L(x,t-1)} \geq \beta^t }.
\end{align*}
We now use Lemma \ref{lem:permute} to get
\[
\pr{A_{x,t}} \geq \frac{1}{t} \pr{X_{L(x,0)} \cdots X_{L(x,t-1)} \geq \beta^t}.
\]
Using Corollary \ref{cor:cramerup} of Cram\'{e}r's theorem,
\begin{align*}
\pr{X_{L(x,0)} \cdots X_{L(x,t-1)} \geq \beta^t} &= \pr{X_{L(x,0)} \cdots X_{L(x,t-1)} \geq e^{-t/c}} \\
    &= \exp{-t \leg{-1/c} + o(t) \Big.}.
\end{align*}
But $\leg{-1/c} = \Psi(c)/c < (\Psi(c)+\delta)/c$. So for $t$ large enough,
\[
\pr{X_{L(x,0)} \cdots X_{L(x,t-1)} \geq \beta^{t}} \geq \exp{-(\Psi(c)+\delta)t/c} = \beta^{(\Psi(c)+\delta)t}.
\]
As a result
\[
\pr{A_{x,t}} \geq \frac{\beta^{(\Psi(c)+\delta)t}}{t} \geq \frac{\beta^t}{t}. \qedhere
\]
\end{proof}

Theorem \ref{thm:height} is proven using the second moment method 
on the number of nodes that have a large depth. 

\begin{lemma}\label{lem:jointproba}
Let $X$ have an atom of weight $p$ at $0$ for some $p\in [0,1)$,
and a density bounded by $\kappa$, of total mass $1-p$, on
$(0,1)$. Let $x \neq y$ be elements of $\{2n+1, \dots, 3n\}$, let
$t$ be a positive integer and let $\beta \in (0,1)$. Then
\[
\pr{A_{x,t} \cap A_{y,t}} \leq \sum_{s=0}^{t-1} \pr{A_{x,t}} \pr{A_{y,s}} \frac{(t+1) \kappa}{n \beta^{s}} + \pr{A_{x,t}} \pr{A_{y,t}}.
\]
\end{lemma}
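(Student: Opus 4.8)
The plan is to condition on the first point where the two upward paths from $x$ and $y$ meet. Since both $x$ and $y$ lie in $\{2n+1,\dots,3n\}$ and $x\neq y$, the paths $L(x,0),L(x,1),\dots$ and $L(y,0),L(y,1),\dots$ start at distinct labels. Let $\tau$ be the first time the paths coincide, i.e.\ $\tau=\min\{s\ge 0 : L(x,s)=L(y,s')\text{ for some }s'\}$; more conveniently, since labels are strictly decreasing along a path and the step $i\mapsto\floor{iX_i}$ is a deterministic function of $i$ once $X_i$ is revealed, two paths that ever share a label share all subsequent labels, so I will let $\tau$ be the least $s$ with $L(x,s)\in\{L(y,0),L(y,1),\dots\}$ and write $s$ for the corresponding index on the $y$-path. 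First I would decompose
\[
\pr{A_{x,t}\cap A_{y,t}} = \pr{A_{x,t}\cap A_{y,t}\cap[\tau>t]} + \sum_{s=0}^{t-1}\pr{A_{x,t}\cap A_{y,t}\cap[\text{merge at }y\text{-index }s]}.
\]
On $[\tau>t]$ the first $t$ steps of the two paths use disjoint sets of i.i.d.\ variables $X_{L(x,i)}$ and $X_{L(y,j)}$, so conditioning on the (disjoint) vertex sets involved and using independence gives $\pr{A_{x,t}\cap A_{y,t}\cap[\tau>t]}\le\pr{A_{x,t}}\pr{A_{y,t}}$, which is the last term in the claimed bound.

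For the merging terms, fix the $y$-index $s\in\{0,\dots,t-1\}$ at which the merge happens. The idea is that before the merge the $y$-path must have survived $s$ steps while keeping labels above $n\beta^{s}$, i.e.\ the event $A_{y,s}$ holds and is measurable with respect to $X_{L(y,0)},\dots,X_{L(y,s-1)}$; the $x$-path must realize $A_{x,t}$ in full; and the merge event itself forces $L(y,s)=L(x,i)$ for some $i$, which happens only if the value $X_{L(y,s-1)}$ (or $y$ itself when $s=0$) lands $\floor{L(y,s-1)X_{L(y,s-1)}}$ exactly on one of the at most $t+1$ labels $L(x,0),\dots,L(x,t)$ on the $x$-path. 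Here is where the density bound enters: conditioning on everything determining the $x$-path and on $L(y,0),\dots,L(y,s-1)$, the conditional probability that the next step of the $y$-path hits a specified label $\ell$ is $\pr{\floor{L(y,s-1)X}=\ell}\le \kappa/L(y,s-1)$ because $\law(X)$ restricted to its continuous part has density bounded by $\kappa$ (an atom at $0$ can only help, as it sends the path to $0$, not to $\ell\ge n\beta^s\ge 1$). On the event $A_{y,s}$ we have $L(y,s-1)\ge n\beta^{s-1}\ge n\beta^{s}$... more carefully $L(y,s-1)\ge n\beta^{s-1}$, but I will bound crudely by $n\beta^{s}$ (or adjust the constant) so that each of the $\le t+1$ target labels is hit with conditional probability at most $(t+1)\kappa/(n\beta^{s})$ after a union bound over the $x$-path labels. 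The remaining factor $\pr{A_{x,t}}$ comes from the independence of the $x$-path from all the $y$-variables used so far, and $\pr{A_{y,s}}$ bounds the prior survival of the $y$-path, giving the summand $\pr{A_{x,t}}\pr{A_{y,s}}\,(t+1)\kappa/(n\beta^{s})$.

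The main obstacle is making the conditioning rigorous: the set of labels $\{L(x,0),\dots,L(x,t)\}$ is itself random, and the $y$-path's labels $L(y,0),\dots,L(y,s-1)$ are random, so one must set up a filtration in which, at the moment just before the $(s{-}1)\to s$ step of the $y$-path, the entire $x$-path and the prefix of the $y$-path are already measurable and the variable $X_{L(y,s-1)}$ is fresh and independent of everything revealed. The cleanest way is to reveal $X_0,\dots,X_{3n}$ in the order dictated by first exposing the $x$-path completely, then exposing the $y$-path step by step; since each $X_i$ is used at most once along any path and the two paths before merging use disjoint indices, no variable is double-counted, and the Markov-type step bound $\pr{\floor{iX}=\ell\mid i}\le\kappa/i$ applies verbatim. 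Once that bookkeeping is in place, a union bound over $s\in\{0,\dots,t-1\}$ and over the at most $t+1$ labels on the $x$-path assembles the stated inequality.
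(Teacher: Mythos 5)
Your proposal follows essentially the same route as the paper's proof: decompose on the first time the $y$-path hits the exposed $x$-path, bound the no-collision term by $\pr{A_{x,t}}\pr{A_{y,t}}$ using the disjointness of the $X$-variables involved, and bound each collision term by the prior survival probability of the $y$-path times a union bound over the at most $t+1$ labels of the $x$-path, each hit with conditional probability at most $\kappa/(n\beta^{s})$ thanks to the density bound and the label lower bound; the paper formalizes your exposure/filtration step by summing over fixed paths $P_t(x)=Q$ and checking that the three relevant events lie in sigma-algebras generated by disjoint sets of the $X_w$. The only blemish is an off-by-one in your merge indexing: since the colliding step uses $X_{L(y,s-1)}$, the survival event actually independent of it is $A_{y,s-1}$ (with label bound $n\beta^{s-1}$), not $A_{y,s}$ (whose last constraint on $L(y,s)$ is determined by that same variable); using $\pr{A_{y,s-1}}$ and reindexing $s\mapsto s-1$ gives exactly the summand in the statement, so the argument goes through unchanged.
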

\begin{proof}
If $v$ is a node of a \textsc{sarrt}, let $P_t(v) = \{L(v,0),
L(v,1), \dots, L(v,t)\}$ be the first $t+1$ elements of the
(random) path connecting $x$ to the root of the tree. Given $x$
and $y$, define $T = +\infty$ if $P_t(x)\cap P_t(y) = \emptyset$,
otherwise set $T$ to be the minimum non-negative $s$ such that
$L(y,s+1) \in P_t(x)$.
Then
\[
\pr{A_{x,t} \cap A_{y,t}} = \sum_{s=0}^{t-1} \pr{T = s, A_{x,t}\cap A_{y,t}} + \pr{T = +\infty, A_{x,t}\cap A_{y,t}}.
\]

In order to evaluate this expression, we fix the path $P_t(x)$
from $x$ to its $t$-th ancestor. Let $\F=\{Q\subseteq \{0, \dots,
3n\}: x = \max Q, \, |Q|\leq t\}$ be the set of possible paths.
For all $s \in \{0, \dots, t-1\}$
\begin{align*}
\pr{ T = s, A_{x,t}\cap A_{y,t} }
&= \sum_{Q\in\F} \pr{T=s, A_{x,t}\cap A_{y,t}, P_t(x) = Q} \\
&\leq \sum_{Q\in\F} \ind{A_{x,t}}(Q)\; \pr{T=s, A_{y,s}, P_t(x) = Q}
\end{align*}
where $\ind{A_{x,t}}(Q)$ is the indicator of the event $A_{x,t}$
when $P_t(x) = Q$. As the event $A_{x,t}$ is completely determined
by the path $P_t(x)$, $\ind{A_{x,t}}(Q)$ is deterministic.
\begin{align*}
&\pr{ T = s, A_{x,t}\cap A_{y,t} } \\
&\quad\leq \sum_{Q\in\F} \ind{A_{x,t}}(Q)\; \pr{P_{s}(y) \cap Q = \emptyset, L(y,s+1)\in Q, A_{y,s}, P_t(x) = Q} \\
&\quad= \sum_{Q\in\F} \ind{A_{x,t}}(Q)\; \sum_{u:\;\substack{u \geq n \beta^{s} \\ u \notin Q}}
    \pr{P_{s}(y) \cap Q = \emptyset, L(y,s) = u, \floor{uX_u} \in Q, A_{y,s}, P_t(x) = Q}.
\end{align*}
In order to simplify this expression, we use the independence
claim below.
\begin{claim}
For any $Q \subseteq \{0, \dots, 3n\}$ and $u \notin Q$, the
events $\event{P_{s}(y) \cap Q = \emptyset, L(y,s) = u, A_{y,s}}$,
$\event{\floor{uX_u} \in Q}$ and $\event{P_t(x) = Q}$ are mutually
independent.
\end{claim}
\begin{proof}
We show that the three events live in independent sigma-algebras.
Recall that an event $E$ is said to be in the sigma-algebra
generated by a random variable $Y$ when knowing the value of $Y$
determines whether $E$ holds or not.
\begin{enumerate}[(i)]
\item $\event{P_{s}(y) \cap Q = \emptyset, L(y,s) = u, A_{y,s}}$
is in the sigma-algebra generated by $\{X_w: w \notin Q, w \neq
u\}$. In fact, starting at $y$, it is possible to determine the
path of length $s$ starting at $y$ until it reaches a node in $Q
\cup \{u\}$. If any node in $Q$ is reached before $s$ steps, then
$\event{P_{s}(y) \cap Q = \emptyset}$ cannot hold. Moreover, if
node $u$ is reached before $s$, $\event{L(y,s)=u}$ cannot hold
because $u$ is not the root and the attachment distribution
$\law(X)$ is smaller than $1$. Otherwise, knowing the path
$P_{s}(y)$, it is easy to determine whether $\event{P_{s}(y)
\cap Q = \emptyset, L(y,s) = u, A_{y,s}}$ holds or not.

\item $\event{\floor{uX_u} \in Q}$ is in the sigma-algebra
generated by $X_u$.

\item $\event{P_t(x) = Q}$ is in the sigma-algebra generated by
$\{X_w: w \in Q\}$, using an argument similar to (i).
\end{enumerate}
We conclude by recalling that the random variables $X_0, X_1,
\dots, X_{3n}$ are independent.
\end{proof}
It follows that
\begin{align*}
&\pr{ T = s, A_{x,t}\cap A_{y,t} } \\
&\leq \sum_{Q\in\F} \ind{A_{x,t}}(Q) \sum_{u:\; \substack{u \geq n \beta^{s} \\ u \notin Q}}
    \pr{P_{s}(y) \cap Q = \emptyset, L(y,s) = u, A_{y,s}} \pr{P_t(x) = Q} \pr{\floor{uX_u} \in Q} \\
&\leq \sum_{Q\in\F} \ind{A_{x,t}}(Q) \pr{A_{y,s}} \pr{P_t(x) = Q}
    \sup_{u:\; \substack{u \geq n \beta^{s} \\ u \notin Q}} \pr{\floor{uX_u} \in Q} \\
&\leq \left(\sum_{Q\in\F} \ind{A_{x,t}}(Q) \pr{P_t(x) = Q} \right) \pr{A_{y,s}} (t+1)
    \sup_{\substack{u:\; u \geq n \beta^{s} \\ w:\; w \geq n \beta^t}} \pr{\floor{uX_u} = w} \\
&= \pr{A_{x,t}} \pr{A_{y,s}} (t+1)
    \sup_{\substack{u:\; u \geq n \beta^{s} \\ w:\; w \geq n \beta^t}} \pr{\floor{uX_u} = w}.
\end{align*}
The last inequality holds because when the event $A_{x,t}$ holds,
all nodes in $P_t(x)$ have a label at least $n \beta^t$.
In order to bound the collision probability $\pr{\floor{uX_u } =
w}$, we first notice that $w > 0$. So we can use the fact that
conditioned on $X > 0$, $X$ has a density bounded by $\kappa$:
\begin{align*}
\pr{\floor{uX_u } = w} \leq  \pr{ X_u \in\left[\frac{w}{u},\frac{w+1}{u}\right) } \leq \frac{\kappa}{u}.
\end{align*}
Thus,
\begin{align*}
\pr{T = s, A_{x,t}\cap A_{y,t}} &\leq \pr{A_{x,t}} \pr{A_{y,s}} \frac{(t+1)\kappa}{n \beta^{s}}.
\end{align*}
Repeating the above argument for $T=+\infty$, we get
\begin{align*}
\pr{T = +\infty, A_{x,t}\cap A_{y,t}}
    &\leq \sum_{Q\in\F} \ind{A_{x,t}}(Q)\; \pr{P_{t}(y) \cap Q = \emptyset, A_{y,t}, P_t(x) = Q} \\
    &\leq \left(\sum_{Q\in\F} \ind{A_{x,t}}(Q) \pr{P_t(x) = Q} \right) \pr{A_{y,t}} \\
    &= \pr{A_{x,t}} \pr{A_{y,t}}. \qedhere
\end{align*}
\end{proof}

\begin{theorem}\label{thm:height}
Let there exist $p\in[0,1]$ such that with probability $p$,
$X$ has an atom at $0$, and with
probability $1-p$, $X$ has a bounded density on $[0,1)$. The height $H_n$ of a
\textsc{sarrt} with attachment $X$ satisfies
\[
\frac{H_n}{\log n} \inprob \alpha_{\max} \quad\text{as}\quad n\to\infty,
\]
where $\alpha_{\max}$ is defined in equation \eqref{eq:alphamax}.
\end{theorem}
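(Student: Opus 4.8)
The plan is to combine the upper bound (Lemma \ref{lem:upper}), which already gives $\pr{H_n \geq c\log n}\to 0$ for every $c > \alpha_{\max}$ and requires no structure, with a matching lower bound obtained by a second moment argument on a well-chosen family of "good" paths. Since $H_{3n}$ differs from $H_n$ only by constant factors that disappear after dividing by $\log n$, it suffices to work with the relabelling to $\{0,\dots,3n\}$ and the events $A_{x,t}(\beta)$ of \eqref{eq:eventa}. Fix $c \in (1/\mu, \alpha_{\max})$ (nonempty by Proposition \ref{prop:psi}), set $\beta = e^{-1/c}$, pick $\delta>0$ with $0 < \Psi(c)-\delta$ and $\Psi(c)+\delta < 1$ as in Lemma \ref{lem:proba}, and choose $t = t(n) = \ceil{(1-\eta)\log n}$ for a small $\eta>0$; note that the technical requirement $n \geq t\beta^{-t}$ in Lemma \ref{lem:proba} holds for large $n$ precisely because $\beta^{-t} = e^{t/c} = n^{(1-\eta)/c}$ and $c > 1$ (or, when $c$ is close to $1/\mu \leq 1$, one shrinks $\eta$; here one uses $c < \alpha_{\max}$ and the fact that $\alpha_{\max}>1$ when $X$ is non-degenerate, or argues directly). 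On the event $A_{x,t}$ the node $x$ has depth at least $t \geq (1-\eta)\log n$, so it is enough to show $\pr{\bigcup_{x=2n+1}^{3n} A_{x,t}} \to 1$; since $\eta$ is arbitrary this yields $H_{3n}/\log n \to \alpha_{\max}$ in probability (the constant $c$ can be pushed to $\alpha_{\max}$).

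The core is the Chung--Erdős inequality \eqref{eq:chungerdos} applied to $\{A_{x,t}\}_{x=2n+1}^{3n}$. For the numerator, Lemma \ref{lem:proba} gives $\pr{A_{x,t}} \geq \beta^{(\Psi(c)+\delta)t}/t = e^{-(\Psi(c)+\delta)t/c}/t$, so $\sum_{x=2n+1}^{3n}\pr{A_{x,t}} \geq \frac{n}{t} e^{-(\Psi(c)+\delta)t/c}$; with $t \sim (1-\eta)\log n$ this is $\gtrsim n^{1 - (1-\eta)(\Psi(c)+\delta)/c}/\log n$, which tends to $\infty$ once $\eta$ and $\delta$ are small enough, because $\Psi(c) < 1 \leq c$ forces $(\Psi(c)+\delta)/c < 1$. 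For the denominator one bounds $\sum_{x\neq y}\pr{A_{x,t}\cap A_{y,t}}$ using Lemma \ref{lem:jointproba}:
\[
\pr{A_{x,t}\cap A_{y,t}} \leq \pr{A_{x,t}}\pr{A_{y,t}} + \sum_{s=0}^{t-1}\pr{A_{x,t}}\pr{A_{y,s}}\frac{(t+1)\kappa}{n\beta^{s}}.
\]
The first term sums to $\left(\sum_x \pr{A_{x,t}}\right)^2$, which exactly cancels against the numerator; so it suffices to show the double sum of the error terms is $o\!\left(\left(\sum_x \pr{A_{x,t}}\right)^2\right)$, together with the trivial $\sum_x \pr{A_{x,t}} = o\!\left(\left(\sum_x \pr{A_{x,t}}\right)^2\right)$.

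The main obstacle — and the only real computation — is controlling $\sum_{x\neq y}\sum_{s=0}^{t-1}\pr{A_{x,t}}\pr{A_{y,s}}\frac{(t+1)\kappa}{n\beta^{s}}$. Here I use the upper bound $\pr{A_{y,s}} \leq \beta^{(\Psi(c)-\delta)s}$ from Lemma \ref{lem:proba}, so the $s$-summand carries a factor $\beta^{(\Psi(c)-\delta)s}\beta^{-s} = \beta^{-(1-\Psi(c)+\delta)s}$; since $1-\Psi(c)+\delta>0$ and $\beta<1$ this is a geometric-type sum in $s$ dominated by its top term $s = t-1$, giving $\sum_{s}\beta^{(\Psi(c)-\delta)s}\beta^{-s} = \Ord{\beta^{-(1-\Psi(c)+\delta)t}}$ up to constants, hence the whole double sum is $\Ord{\frac{(t+1)^2\kappa}{n}\, n^2\, \beta^{(\Psi(c)+\delta)t}\,\beta^{-(1-\Psi(c)+\delta)t}} = \Ord{(t+1)^2\kappa\, n\, \beta^{(2\Psi(c)-1+... )t}}$ — and one checks the resulting exponent of $n$ against that of $\left(\sum_x\pr{A_{x,t}}\right)^2 \asymp n^2 \beta^{2(\Psi(c)+\delta)t}/t^2$. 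Comparing exponents, the ratio is $\Ord{(t+1)^4\kappa\, n^{-1}\beta^{-(1+\Psi(c)+3\delta)t}}$; writing $\beta^{-t} = n^{(1-\eta)/c}$, this is $n^{-1 + (1-\eta)(1+\Psi(c)+3\delta)/c}$ times polylog factors, which is $o(1)$ provided $(1+\Psi(c))/c < 1$, i.e. $\Psi(c) < c - 1$. This last inequality is exactly where the choice of $c$ matters: one must take $c$ close enough to $\alpha_{\max}$ so that $\Psi(c)$ is close to $1$ while $c-1$ stays safely above $1$ — which is possible precisely because, by the definition \eqref{eq:alphamax} and Proposition \ref{prop:psi}, $\Psi$ is increasing past $1/\mu$ and $\alpha_{\max}$ is large enough (in particular $\alpha_{\max}>2$ unless $X$ is nearly degenerate, a case covered separately by the Remark). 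I would isolate this inequality as the pivotal estimate, verify it from the appendix properties of $\Psi$, and then the Chung--Erdős bound delivers $\pr{\bigcup_x A_{x,t}}\to 1$, completing the lower bound and hence the theorem.
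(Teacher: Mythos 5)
Your overall architecture (Chung--Erd\H{o}s inequality applied to the events $A_{x,t}(\beta)$, with Lemma \ref{lem:proba} for the marginals and Lemma \ref{lem:jointproba} for the collisions, combined with Lemma \ref{lem:upper}) is the same as the paper's, but your choice of the path length $t$ breaks the argument. You take $t=\ceil{(1-\eta)\log n}$, whereas the correct (and the paper's) choice is $t=\floor{(1-\eps)\,c\log n}$ with $\beta=e^{-1/c}$. With your $t$, the event $\bigcup_x A_{x,t}$ only certifies $H_{3n}\geq (1-\eta)\log n$: the parameter $c$ never enters the depth guarantee, so ``pushing $c$ to $\alpha_{\max}$'' does nothing, and at best you would prove $H_n/\log n\geq 1-\eta$, not convergence to $\alpha_{\max}$. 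The scaling $t\approx(1-\eps)c\log n$ is what makes the event imply depth $\geq(1-\eps)c\log n$ (and also guarantees $n\beta^{t}\geq n^{\eps}\geq 1$, so the surviving label is genuinely positive).

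The mis-scaled $t$ is also what forces you into the spurious conditions $c>1$ and $\Psi(c)<c-1$ (hence $\alpha_{\max}>2$). These are not harmless: for $X=\min(U_1,\dots,U_k)$, which has a bounded density and is in no sense nearly degenerate, the paper's Table \ref{tab:values} gives $\alpha_{\max}\approx 1.67$ for $k=2$ and $\alpha_{\max}\approx 0.98<1$ for $k=5$, so your pivotal estimate fails for precisely the examples the theorem is meant to cover, and the Remark (which treats only constant $X$) does not rescue them. With the correct $t$ one has $\beta^{-t}\leq n^{1-\eps}$ \emph{independently of} $c$, and the two error terms in \eqref{eq:jointprobacalc} become $\Ord{n^{-\eps/2}}$ and $\Ord{t^2 n^{2\delta-\eps}}$ relative to $\pr{A_{y,t}}$, so the only constraint is $3\delta<\eps$ --- no relation between $\Psi(c)$ and $c$ is needed at all. (Two smaller slips: in bounding the double sum you inserted the \emph{lower} bound $\beta^{(\Psi(c)+\delta)t}$ for $\pr{A_{x,t}}$ into an upper estimate, where the upper bound $\beta^{(\Psi(c)-\delta)t}$ is required; and the resulting exponent should come out as $\beta^{-(1+\Ord{\delta})t}$, not $\beta^{-(1+\Psi(c)+3\delta)t}$. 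Also the claim ``$\Psi(c)<1\leq c$'' is unjustified, since $c>1/\mu$ does not force $c\geq 1$.) Replacing your $t$ by $\floor{(1-\eps)c\log n}$ and redoing the ratio estimates as above repairs the proof and reduces it to the paper's argument.
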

\begin{proof}
If the atom at $0$ has probability $1$, then $H_n = 1$ and
$\alpha_{\max} = 0$. In the rest of the proof, we assume that $X$
is not a single mass. Fix $\delta \in (0,1/2)$, $\eps \in (0,1)$
with $3\delta < \eps$ and $c \in (1/\mu, \alpha_{\max})$. Define
$\beta = e^{-1/c}$ and $t = \floor{(1-\eps) c \log n}$. Our
objective is to show that
\[
\lim_{n \to \infty} \pr{H_{3n} \geq t} = 1.
\]
For this we consider the event
\[
\event{\bigcup_{x=2n+1}^{3n} A_{x,t}}
\]
where the events $A_{x,t}$ are defined in equation
\eqref{eq:eventa}. The fact that $A_{x,t}$ holds implies that
$L(x,t) \geq n\beta^t \geq n/n^{1-\eps} = n^{\eps} \geq 1$, i.e.,
the depth of node $x$ is at least $t$. A lower bound on the
probability is given by the following second moment inequality
\citep{CE52}:
\begin{equation}\label{eq:secondmoment}
\pr{\bigcup_{x=2n+1}^{3n} A_{x,t}} \geq \frac{\left(\sum_{x=2n+1}^{3n} \pr{A_{x,t}}\right)^2 }{\sum_{x=2n+1}^{3n} \pr{A_{x,t}} + \sum_{x \neq y} \pr{A_{x,t} \cap A_{y,t}}}.
\end{equation}
The symbol $\sum_{x \neq y}$ is used instead of
$\sum_{x=2n+1}^{3n} \sum_{y=2n+1, y \neq x}^{3n}$ to keep the
notation light. Let $t_0(c,\delta, \law(X))$ be defined as in
Lemma \ref{lem:proba}. When $n$ is large enough, the conditions $t
\geq t_0$ and $n \geq t \beta^{-t}$ are met. So Lemma
\ref{lem:proba} gives
\begin{equation}\label{eq:probalow}
\pr{A_{x,t}} \geq \frac{\beta^t}{t} \geq \frac{1}{tn^{1-\eps}}.
\end{equation}
Now, fixing $x \neq y$, we have by Lemma \ref{lem:jointproba}:
\[
\pr{A_{x,t} \cap A_{y,t}} \leq \sum_{s=0}^{t-1} \pr{A_{x,t}} \pr{A_{y,s}} \frac{(t+1) \kappa}{n \beta^{s}} + \pr{A_{x,t}} \pr{A_{y,t}}.
\]
For $s \geq t_0$, we apply Lemma \ref{lem:proba} to find an upper bound on $\pr{A_{x,s}}$:
\begin{align}\label{eq:jointprobacalc}
\pr{A_{x,t}\cap A_{y,t}}
    &\leq \pr{A_{x,t}} \left( \sum_{s=0}^{t_0-1} \frac{(t+1)\kappa}{n \beta^{s}} +
        \sum_{s=t_0}^{t-1} \beta^{(\Psi(c)-\delta)s} \frac{(t+1)\kappa}{n \beta^{s}} + \pr{A_{y,t}} \right) \notag \\
    &\leq \pr{A_{x,t}} \left( \frac{(t+1)\kappa}{n} \cdot \frac{\beta^{-t_0}-1}{\beta^{-1} - 1} +
        \frac{(t+1)\kappa}{n} \sum_{s=0}^{t-1} \beta^{(\Psi(c)-\delta - 1) s} + \pr{A_{y,t}} \right) \notag \\
    &\leq \pr{A_{x,t}}\left( \Ord{\frac{t}{n}} + \frac{(t+1)\kappa}{n} \cdot \frac{\beta^{(\Psi(c)-\delta - 1)t}-1}{\beta^{(\Psi(c)-\delta-1)}-1}
        + \pr{A_{y,t}}\right).
\end{align}
We now show that the dominating term is $\pr{A_{x,t}}
\pr{A_{y,t}}$. Using inequality \eqref{eq:probalow},
\begin{equation}\label{eq:firstterm}
\frac{t/n}{\pr{A_{y,t}}} \leq \frac{t^2 n^{1-\eps} }{n} = \Ord{n^{-\eps/2}}
\end{equation}
as $t = O(\log n)$. Moreover, using the more precise lower bound
on $\pr{A_{y,t}}$ given in Lemma \ref{lem:proba},
\[
\frac{t \beta^{(\Psi(c)-\delta-1)t} }{n \pr{A_{y,t}} }
\leq \frac{t^2 \beta^{(\Psi(c)-\delta-1)t} \beta^{-(\Psi(c)+\delta)t} }{n}
= \frac{t^2 (\beta^{-t})^{2\delta} \beta^{-t}}{n}.
\]
By definition of $t$, $\beta^{-t} \leq n^{1-\eps}$, and thus
\begin{equation}\label{eq:secondterm}
\frac{t \beta^{(\Psi(c)-\delta-1)t} }{n \pr{A_{y,t}} } \leq t^2 n^{2\delta - \eps}
\leq t^2 n^{-\eps/3} = O(n^{-\eps/4}).
\end{equation}
Plugging inequalities \eqref{eq:firstterm} and
\eqref{eq:secondterm} into \eqref{eq:jointprobacalc}, we get
\[
\pr{A_{x,t} \cap A_{y,t}} \leq \pr{A_{x,t}} \pr{A_{y,t}} \left(1 + \Ord{n^{-\eps/4}}\right).
\]
Taking the sum over all nodes $x \neq y$ with $x,y \in \{2n+1,
\dots, 3n\}$, we obtain
\[
\sum_{x \neq y} \pr{A_{x,t}\cap A_{y,t}} \leq \left(\sum_{x=2n+1}^{3n} \pr{A_{x,t}}\right)^2\left(1 + \Ord{n^{-\eps/4}} \right).
\]
Moreover, using inequality \eqref{eq:probalow}, we have
\[
\sum_{x=2n+1}^{3n} \pr{A_{x,t}} \geq n\frac{1}{tn^{1-\eps}} = \frac{n^{\eps}}{t}.
\]
Thus, plugging these bounds in \eqref{eq:secondmoment}, we get
\begin{align*}
\pr{\bigcup_{x=2n+1}^{3n} A_{x,t}}
    &\geq \frac{1}{\left(\sum_{x=2n+1}^{3n} \pr{A_{x,t}} \right)^{-1} + 1 + \Ord{n^{-\eps/4}}} \\
    &\geq 1 - \Ord{n^{-\eps/4}} - \Ord{tn^{-\eps}}.
\end{align*}

This shows that
\begin{equation}\label{eq:probheightlow}
\pr{H_{3n} \geq t} = \pr{H_{3n} \geq \floor{(1-\eps)  c \log n}} \geq 1 - \Ord{n^{-\eps/4}}.
\end{equation}
We conclude that for any $\eps > 0$,
\[
\lim_{n\to\infty} \pr{H_{n} \geq (1-\eps)  \alpha_{\max} \log n } = 1.
\]
Combining this with the upper bound proved in Lemma
\ref{lem:upper}, we get the desired result.
\end{proof}


\subsection{Attachment distribution with unbounded density}\label{sec:unbounded}

In order to handle attachment distributions $X$ having unbounded densities,
the next lemma shows that we can approximate $X$ by $X_{\delta}$
that has bounded density and an atom at $0$.

\begin{lemma}\label{lem:unbounded}
Assume that $X \in [0,1)$ has a density, and let $z \geq -\mu$ be
such that $\leg{z} < +\infty$. Then for all $\delta > 0$, there
exists $X_{\delta} \leq X$ such that $\law(X_{\delta})$ has a
bounded density and an atom at $0$, such that
\[
\Lambda^*(z) \leq \Lambda_{\delta}^*(z) \leq \Lambda^*(z) + \delta
\]
where $\Lambda_{\delta}^{*}$ is defined as in
\eqref{eq:lambdainfdef} for $X_{\delta}$.
\end{lemma}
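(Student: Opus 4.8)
The plan is to build $X_\delta$ by truncating the density of $X$ at a large level and moving the excess mass to $0$. Let $f$ be the density of $X$ on $[0,1)$, let $K>0$, and let $X_\delta$ have density $g_K=\min(f,K)$ on $(0,1)$ and an atom at $0$ of mass $p_K=1-\int_0^1 g_K=\int_0^1(f-K)^+$. Then $g_K\le f$ pointwise gives $\pr{X_\delta>a}\le\pr{X>a}$ for all $a$, so $X_\delta$ is stochastically dominated by $X$ and can be coupled so that $X_\delta\le X$ almost surely; moreover $\law(X_\delta)$ has density bounded by $K$, and $p_K\to 0$ as $K\to\infty$ by monotone convergence. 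The level $K=K(\delta)$ is fixed only at the very end. Write $\Lambda(\lambda)=\log\ex{X^\lambda}$ and $\Lambda_\delta(\lambda)=\log\ex{X_\delta^\lambda}$ for $\lambda>0$; since $z\ge-\mu=\ex{\log X}$, both $\leg z$ and $\Lambda_\delta^*(z)$ are suprema over $\lambda\ge 0$ (the $\lambda=0$ term being $-\Lambda(0)=0$ for $\leg z$, and $-\log(1-p_K)\ge 0$ for $\Lambda_\delta^*$), and $\leg z\ge 0$.

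The lower bound $\leg z\le\Lambda_\delta^*(z)$ is immediate: since $X_\delta\le X$ and $x\mapsto x^\lambda$ is nondecreasing for $\lambda>0$, one has $\ex{X_\delta^\lambda}\le\ex{X^\lambda}$, hence $\lambda z-\Lambda_\delta(\lambda)\ge\lambda z-\Lambda(\lambda)$ for $\lambda>0$, and (using also $-\log(1-p_K)\ge 0$ at $\lambda=0$) taking suprema over $\lambda\ge 0$ gives the claim.

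For the upper bound $\Lambda_\delta^*(z)\le\leg z+\delta$ I would split the range of $\lambda$ at a threshold $M_0$ that does \emph{not} depend on $K$. To produce such an $M_0$, fix once and for all some $K_0>0$, let $X^{(K_0)}$ be the variable truncated at level $K_0$, and note that for every $K\ge K_0$ we have $g_K\ge g_{K_0}$, so $X_\delta$ stochastically dominates $X^{(K_0)}$ and $\Lambda_\delta(\lambda)\ge\Lambda^{(K_0)}(\lambda):=\log\ex{(X^{(K_0)})^\lambda}$. Since $X^{(K_0)}$ and $X$ have the same essential supremum $x_0\in(0,1]$, $\Lambda^{(K_0)}(\lambda)/\lambda\to\log x_0$, and since $\leg z<+\infty$ with $z\ge-\mu$ forces $z<\log x_0$ (a property of $\Lambda^*$ proved in Appendix \ref{sec:applambda}), there is $M_0$, depending only on $K_0$, with $\Lambda^{(K_0)}(\lambda)>\lambda z$ for all $\lambda>M_0$; hence for all $K\ge K_0$ and $\lambda>M_0$, $\lambda z-\Lambda_\delta(\lambda)\le\lambda z-\Lambda^{(K_0)}(\lambda)<0\le\leg z+\delta$. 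On the compact range $[0,M_0]$ I would instead use $\ex{X^\lambda}\ge\ex{X^{M_0}}=:\eta>0$ (here $X>0$ a.s.) together with $\ex{X^\lambda}-\ex{X_\delta^\lambda}=\int_0^1 x^\lambda(f-K)^+\le p_K$; choosing $K\ge K_0$ large enough that $p_K\le\eta(1-e^{-\delta})$ and $-\log(1-p_K)\le\delta$ yields $\ex{X_\delta^\lambda}\ge e^{-\delta}\ex{X^\lambda}$, i.e. $\Lambda_\delta(\lambda)\ge\Lambda(\lambda)-\delta$, so $\lambda z-\Lambda_\delta(\lambda)\le\lambda z-\Lambda(\lambda)+\delta\le\leg z+\delta$ for $\lambda\in[0,M_0]$, while $\varphi_\delta(0)=-\log(1-p_K)\le\delta\le\leg z+\delta$. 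Taking the supremum over $\lambda\ge 0$ then finishes the proof.

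The main obstacle is this uniform control over large $\lambda$: one cannot expect $\Lambda_\delta\to\Lambda$ uniformly on $[0,\infty)$ — if the density of $X$ blows up near $1$, the ratio $\ex{X^\lambda}/\ex{X_\delta^\lambda}$ diverges as $\lambda\to\infty$ — so the cutoff $M_0$ has to come from the fixed comparison variable $X^{(K_0)}$, and on $[M_0,\infty)$ one exploits that $\lambda z$ dominates $\Lambda_\delta(\lambda)$ rather than comparing $\Lambda_\delta$ with $\Lambda$ directly.
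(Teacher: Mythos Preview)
Your construction coincides with the paper's---truncate the density at a large level and send the excess mass to an atom at $0$---but your analysis of the upper bound $\Lambda_\delta^*(z)\le\leg z+\delta$ is genuinely different. The paper controls $\ex{X^\lambda\ind{A}}$ via Cauchy--Schwarz, obtaining $\ex{X^\lambda\ind{A}}\le\sqrt{\eta}\sqrt{\ex{X^{2\lambda}}}$, and then exploits the substitution $\lambda'=2\lambda$ (so that $\inf_{\lambda\ge 0}e^{-2\lambda z}\ex{X^{2\lambda}}=e^{-\leg z}$) to arrive at the closed form $\leg z-\log\!\big(1-\sqrt{\eta\, e^{\leg z}}\big)$ uniformly in $\lambda$. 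You instead split $[0,\infty)$ at a threshold $M_0$ determined by a \emph{fixed} truncation $X^{(K_0)}$: on $(M_0,\infty)$ you use that $z<\log x_0$ forces $\lambda z-\Lambda^{(K_0)}(\lambda)<0$ for all large $\lambda$ (uniformly in $K\ge K_0$), while on $[0,M_0]$ the crude bound $\ex{X^\lambda}-\ex{X_\delta^\lambda}\le p_K$ and $\ex{X^\lambda}\ge\ex{X^{M_0}}>0$ suffice. Your route is longer but more robust; in particular it sidesteps a delicate passage in the paper's argument, where the step from $\inf_\lambda\{A_\lambda-\sqrt{\eta}\sqrt{A_{2\lambda}}\}$ to $\inf_\lambda A_\lambda-\sqrt{\eta}\,\inf_\lambda\sqrt{A_{2\lambda}}$ (with $A_\lambda=e^{-\lambda z}\ex{X^\lambda}$) uses an inequality of the form $\inf(A-C)\ge\inf A-\inf C$ that does not hold in general. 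One small point on your side: the implication ``$\leg z<+\infty$ and $z\ge-\mu$ force $z<\log x_0$'' is not stated verbatim in Appendix~\ref{sec:applambda}, but it is immediate since for $z\ge\log x_0$ one has $\lambda z-\Lambda(\lambda)\ge-\log\ex{(X/x_0)^\lambda}\to+\infty$ as $\lambda\to\infty$, using that $X$ has a density so $\pr{X=x_0}=0$.
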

\begin{proof}
The constants $\eta,b > 0$ will be chosen later. Let $f$ be the
density of $\law(X)$ and define the event $A=\event{f(X) > b}$.
Take $b$ be such that $\pr{A} \leq \eta$. Define $X_{\delta} =
0\ind{A} + X\ind{A^c}$. We have
\begin{align*}
\Lambda_{\delta}^*(z)
    &= \sup_{\lambda \geq 0} \left\{ \lambda z - \log \ex{X_{\delta}^{\lambda}} \right\} \\
    &= -\log \inf_{\lambda \geq 0} \left\{ e^{-\lambda z} \ex{(0\ind{A} + X\ind{A^c})^{\lambda}} \right\} \\
    &= -\log \inf_{\lambda \geq 0} \left\{ e^{-\lambda z} \ex{0\ind{A} + X^{\lambda}\ind{A^c}} \right\} \\
    &= -\log \inf_{\lambda \geq 0} \left\{ e^{-\lambda z} \left(\ex{X^{\lambda}} - \ex{X^{\lambda}\ind{A}}\right) \right\}.
\end{align*}
Note that the expression $\lambda z - \log
\ex{X_{\delta}^{\lambda}}$ is understood to evaluate to
$-\log(1-\pr{A})$ for $\lambda = 0$ as in equation
\eqref{eq:lambdainfdef}. Trivially, we first get
$\Lambda_{\delta}^*(z) \geq \Lambda^*(z)$. Moreover, using
Cauchy-Schwarz inequality,
\[
\ex{X^{\lambda}\ind{A}} \leq \sqrt{\ex{X^{2\lambda}}} \sqrt{\pr{A}}
\leq \sqrt{\ex{X^{2\lambda}}} \sqrt{\eta}.
\]
Thus,
\begin{align*}
\Lambda_{\delta}^*(z)
    &\leq -\log \inf_{\lambda \geq 0} \left\{ e^{-\lambda z} \ex{X^{\lambda}} - \sqrt{e^{-2\lambda z} \ex{X^{2\lambda}}} \sqrt{\eta} \right\} \\
    &\leq -\log \left( \inf_{\lambda \geq  0} \left\{ e^{-\lambda z} \ex{X^{\lambda}} \right\} - \sqrt{\eta}\inf_{\lambda \geq 0} \left\{ \sqrt{ e^{-2\lambda z} \ex{X^{2\lambda}}} \right\} \right) \\
    &= -\log \left( e^{-\leg{z}} - \sqrt{\eta} e^{-\leg{z}/2} \right) \\
    &= \leg{z} - \log\left(1-\sqrt{\eta e^{\leg{z}}}\right).
\end{align*}
By choosing $\eta$ so that $\log\left(1-\sqrt{\eta
e^{\leg{z}}}\right) \leq \delta$, we obtain the desired result.
\end{proof}

We can now restate the theorem for any density.

\begin{theorem}\label{thm:heightunbounded}
Let there exist $p\in[0,1]$ such that with probability $p$,
$X$ has an atom at $0$, and with
probability $1-p$, $X$ has a bounded density on $[0,1)$. The height $H_n$ of a
\textsc{sarrt} with attachment $X$ satisfies
\[
\frac{H_n}{\log n} \inprob \alpha_{\max} \quad\text{as}\quad n\to\infty,
\]
where $\alpha_{\max}$ is defined in equation \eqref{eq:alphamax}.
\end{theorem}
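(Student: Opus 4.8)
The plan is to bootstrap from the bounded-density result (Theorem~\ref{thm:height}) by approximating $X$ from below by the truncation of Lemma~\ref{lem:unbounded} and then transferring the lower bound through a monotone coupling; the atom at $0$ that Theorem~\ref{thm:height} tolerates is precisely what makes the truncated attachment distribution fall under its hypotheses. Since Lemma~\ref{lem:upper} already gives $\pr{H_n\geq c\log n}\to 0$ for every $c>\alpha_{\max}$ with no assumption on $X$, it suffices to establish the matching lower bound $\pr{H_n\geq c\log n}\to 1$ for every $c<\alpha_{\max}$. The degenerate cases are immediate ($p=1$ gives $H_n=1$ and $\alpha_{\max}=0$, and $X$ constant is the Remark following Theorem~\ref{thm:heightmain}), so assume $X$ is not a single mass; then $\alpha_{\max}>1/\mu$ by Proposition~\ref{prop:psi}.

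Fix $c\in(1/\mu,\alpha_{\max})$. By Proposition~\ref{prop:psi}, $0<\Psi(c)<1$, so $\leg{-1/c}=\Psi(c)/c$ is finite and Lemma~\ref{lem:unbounded} applies with $z=-1/c\geq-\mu$: for every $\delta>0$ there is $X_\delta\leq X$ whose law has a bounded density on $(0,1)$ together with an atom at $0$, with $\leg{-1/c}\leq\Lambda^*_\delta(-1/c)\leq\leg{-1/c}+\delta$, where $\Lambda^*_\delta$ is the rate function of $\log X_\delta$. Let $\Psi_\delta$ and $\alpha_{\max}^{(\delta)}$ be the objects of \eqref{eq:psi}--\eqref{eq:alphamax} attached to $X_\delta$; as $X_\delta$ has an atom at $0$, $\mu_\delta=+\infty$, so $\alpha_{\max}^{(\delta)}=\inf\{c'>0:\Psi_\delta(c')>1\}$. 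From the displayed bounds, $\Psi(c)\leq\Psi_\delta(c)\leq\Psi(c)+c\delta$, so choosing $\delta<(1-\Psi(c))/c$ gives $\Psi_\delta(c)<1$; since $\Psi_\delta$ is continuous and increasing (Proposition~\ref{prop:psi} applied to $X_\delta$), this forces $\alpha_{\max}^{(\delta)}>c$.

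Theorem~\ref{thm:height} now applies to the \textsc{sarrt} with attachment $X_\delta$ and yields $H_n^{(\delta)}/\log n\inprob\alpha_{\max}^{(\delta)}>c$, hence $\pr{H_n^{(\delta)}\geq c\log n}\to 1$, where $H_n^{(\delta)}$ is the height of that tree. It remains to couple the two trees so that $H_n^{(\delta)}\leq H_n$. Build both \textsc{sarrt}'s from one sequence $U_0,\dots,U_n$ of i.i.d.\ uniforms: take $X_i$ as $X$ expressed through $U_i$ and, following the proof of Lemma~\ref{lem:unbounded}, set $X_{\delta,i}=X_i$ when $X_i$ avoids the high-density region and $X_{\delta,i}=0$ otherwise, so $X_{\delta,i}\leq X_i$. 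Then in the $X_\delta$-tree every node $i$ keeps its $X$-tree parent, except that each ``reset'' node ($X_{\delta,i}=0$) is re-attached to the root. Walking up from any node $v$ in the $X_\delta$-tree therefore follows the $X$-tree path until the first reset node $w$ on it is met and then jumps to $0$, so $D_v^{(\delta)}$ equals one plus the $X$-tree distance from $v$ to $w$; since $w\neq 0$ its $X$-depth is at least $1$, whence $D_v^{(\delta)}\leq D_v$ (and $D_v^{(\delta)}=D_v$ if the $X$-path from $v$ has no reset node). Maximizing over $v$ gives $H_n^{(\delta)}\leq H_n$ pointwise, so $\pr{H_n\geq c\log n}\to 1$. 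As $c\in(1/\mu,\alpha_{\max})$ was arbitrary, $\pr{H_n/\log n\geq c}\to 1$ for all $c<\alpha_{\max}$, and combining with Lemma~\ref{lem:upper} gives $H_n/\log n\inprob\alpha_{\max}$.

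The delicate point is the coupling: one must verify that replacing part of the attachment law by an atom at $0$ can only shorten every root path (resetting an internal node's parent to the root truncates all paths through it), and that this truncation costs only $o(1)$ in $\leg{-1/c}$ — hence $o(1)$ in $\Psi$, which after the monotonicity argument pushes $\alpha_{\max}^{(\delta)}$ above any prescribed $c<\alpha_{\max}$. That quantitative control is exactly what Lemma~\ref{lem:unbounded} provides; the rest is bookkeeping with Theorem~\ref{thm:height} and the properties of $\Psi$ established in the appendix.
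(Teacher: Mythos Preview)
Your proof is correct and follows the same route as the paper: the upper bound comes from Lemma~\ref{lem:upper}, and the lower bound is obtained by truncating $X$ via Lemma~\ref{lem:unbounded}, applying Theorem~\ref{thm:height} to the resulting bounded-density-plus-atom attachment $X_\delta$, and transferring the conclusion through the inequality $X_\delta\leq X$. Your explicit coupling argument for $H_n^{(\delta)}\leq H_n$ is in fact more careful than the paper's one-line assertion that ``$X_\delta\leq X$ implies $\wt H_n$ is stochastically not larger than $H_n$'': you correctly isolate that the relevant feature is not merely $X_\delta\leq X$ but the specific form $X_\delta\in\{0,X\}$, which makes every root-path in the $X_\delta$-tree a truncation of the corresponding $X$-tree path.
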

\begin{proof}
If the atom has probability $1$, then Theorem \ref{thm:height} can
be applied. In the rest of the proof, we assume that the atom at
$0$ has weight less than one. Since Lemma \ref{lem:upper} does not
have any restrictions on the distribution $\law(X)$, we have for
any $\eps > 0$,
\[
\lim_{n \to \infty} \pr{H_n \geq (\alpha_{\max}+\eps) \log n} = 0.
\]

For the lower bound, we use Theorem \ref{thm:height} via the
transformation defined in Lemma \ref{lem:unbounded}. Let $\eps >
0$ and pick $\delta > 0$ small enough so that
$\Psi(\alpha_{\max}-\eps) + \alpha_{\max} \delta < 1$. This is
possible because $\Psi(\alpha_{\max}-\eps) < 1$ (Proposition
\ref{prop:psi} in Appendix \ref{sec:applambda}). Then define
$X_{\delta}$ as in Lemma \ref{lem:unbounded}, so that
$\Lambda^*(z) \leq \Lambda_{\delta}^*(z) \leq \Lambda^*(z) +
\delta$. Define a tree $\wt{T}_n$ with a sequence $\wt{X}_0,
\dots, \wt{X}_n$ of independent random variables distributed as
$X_{\delta}$. Using Theorem \ref{thm:height} for the tree
$\wt{T}_n$, we get in particular a lower bound on its height
$\wt{H}_n$:
\[
\lim_{n \to \infty} \pr{\wt{H}_n \leq (\wt{\alpha}_{\max}-\eps) \log n} = 0
\]
where $\wt{\alpha}_{\max} = \inf\left\{c:\; c > \frac{1}{\mu}
\;\text{ and }\; \Psi_{\delta}(c) > 1\right\}$ and
$\Psi_{\delta}(c) = c \Lambda^*_{\delta}(-1/c)$. Recall that
$X_{\delta}$ as obtained from Lemma \ref{lem:unbounded} satisfies
$X_{\delta} \leq X$, which implies that $\wt{H}_n$ is
stochastically not larger than $H_n$. Thus,
\[
\lim_{n \to \infty} \pr{H_n \leq (\wt{\alpha}_{\max}-\eps) \log n} = 0.
\]
Next, if $\Psi$ is the function defined in \eqref{eq:psi} for the
(original) random variable $X$ and $\alpha_{\max} =
\inf\left\{c:\; c > \frac{1}{\mu} \;\text{ and }\; \Psi(c) >
1\right\}$, we have by construction of $X_{\delta}$,
\[
\Psi(\alpha_{\max}-\eps) \leq \Psi_{\delta}(\alpha_{\max}-\eps) \leq \Psi(\alpha_{\max}-\eps) +\alpha_{\max} \delta < 1.
\]
As a result, by definition of $\wt{\alpha}_{\max}$, we have
\[
\wt{\alpha}_{\max} \geq \alpha_{\max}-\eps
\]
so that
\[
\lim_{n \to \infty} \pr{H_n \leq (\alpha_{\max}-2\eps) \log n} = 0. \qedhere
\]
\end{proof}


\subsection{Almost sure convergence and convergence in mean}\label{sec:convergence}

Using Proposition \ref{prop:almostsure} below and the explicit
probability bounds given in the proofs of Lemma \ref{lem:upper},
equation \eqref{eq:probheightup} and Theorem \ref{thm:height},
equation \eqref{eq:probheightlow}, we get $\lim_{n\to\infty} 
\frac{H_n}{\log n} = \alpha_{\max}$ almost surely as stated above in Theorem \ref{thm:heightmain}.
We should mention that \citet{Pit94} also proved almost sure convergence of the height for the
\textsc{urrt}.

\begin{proposition}\label{prop:almostsure}
Let $H_n$ be a non-decreasing sequence of random variables and let
$\alpha \geq 0$ be such that for all $\eps >  0$,
\[
\pr{H_n \geq (\alpha +\eps) \log n} = \Ord{\frac{1}{\log n}} \quad
\text{ and } \quad \pr{H_n \leq (\alpha-\eps) \log n} =
\Ord{\frac{1}{\log n}}.
\]
Then, with probability $1$,
\[
\lim_{n \to \infty} \frac{H_n}{\log n} = \alpha.
\]
\end{proposition}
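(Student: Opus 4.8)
\textbf{Proof plan for Proposition \ref{prop:almostsure}.}
The plan is to apply the first Borel--Cantelli lemma along a carefully spaced subsequence $(n_k)$ and then use the monotonicity of $H_n$ to interpolate between consecutive terms of the subsequence. The subsequence must be chosen so that two competing requirements are met simultaneously: it has to grow fast enough that the given $\Ord{1/\log n}$ tail bounds become summable along it, yet slowly enough that $\log n_{k+1}/\log n_k \to 1$ (so a doubly-exponential choice like $n_k = 2^{2^k}$ is \emph{too} fast for interpolation). The natural compromise is $n_k = \ceil{e^{k^2}}$, for which $k^2 \leq \log n_k < k^2 + 1$, hence $\sum_k \Ord{1/\log n_k} = \sum_k \Ord{1/k^2} < \infty$, while $\log n_{k+1}/\log n_k = (1+1/k)^2 + o(1) \to 1$.

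First I would fix $\eps > 0$ and apply Borel--Cantelli to the two event sequences $\event{H_{n_k} \geq (\alpha+\eps)\log n_k}$ and $\event{H_{n_k} \leq (\alpha-\eps)\log n_k}$, each of which has summable probabilities by the hypothesis and the choice of $(n_k)$. This yields that, with probability $1$, there is a (random) index $K$ with
\[
(\alpha-\eps)\log n_k \;<\; H_{n_k} \;<\; (\alpha+\eps)\log n_k \qquad \text{for all } k \geq K .
\]
Here the lower inequality is vacuous when $\alpha = 0$ (since $H_n \geq 0$), and when $\alpha > 0$ it suffices to treat $\eps < \alpha$, the statement for a given $\eps$ being implied by the one for any smaller value.

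Next I would invoke monotonicity: for every integer $n$ with $n_k \leq n \leq n_{k+1}$ and $k \geq K$, the fact that $H$ is non-decreasing gives
\[
(\alpha-\eps)\,\frac{\log n_k}{\log n_{k+1}} \;<\; \frac{H_{n_k}}{\log n_{k+1}} \;\leq\; \frac{H_n}{\log n} \;\leq\; \frac{H_{n_{k+1}}}{\log n_k} \;<\; (\alpha+\eps)\,\frac{\log n_{k+1}}{\log n_k}.
\]
Letting $n \to \infty$, equivalently $k \to \infty$, and using $\log n_{k+1}/\log n_k \to 1$, we conclude that almost surely
\[
\alpha - \eps \;\leq\; \liminf_{n\to\infty} \frac{H_n}{\log n} \;\leq\; \limsup_{n\to\infty} \frac{H_n}{\log n} \;\leq\; \alpha + \eps .
\]

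Finally I would remove $\eps$ by intersecting the preceding almost sure events over $\eps = 1/m$, $m \in \N$, which gives $\lim_{n\to\infty} H_n/\log n = \alpha$ with probability $1$. The only point requiring any care is the choice of the subsequence balancing summability against $\log n_{k+1}/\log n_k \to 1$; the $o(1)$ discrepancy between $\log n_k$ and $k^2$ introduced by the ceiling is harmless, since $\log\ceil{e^{k^2}} \in [k^2, k^2+1)$, and the rest of the argument is routine.
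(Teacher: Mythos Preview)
Your proof is correct and follows essentially the same approach as the paper: both pick a subsequence with $\log n_k \asymp k^2$ (you use $n_k = \ceil{e^{k^2}}$, the paper uses $n_k = \gamma^{k^2}$ for an integer $\gamma \geq 3$), invoke Borel--Cantelli on that subsequence via the $\Ord{1/\log n}$ hypothesis, and exploit monotonicity together with $\log n_{k+1}/\log n_k \to 1$ to fill in the gaps. The only cosmetic difference is that the paper bounds $\max$ and $\min$ of $H_n/\log n$ over each block directly, whereas you first control $H_{n_k}$ and then sandwich $H_n/\log n$ between expressions at the block endpoints; the content is the same.
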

\begin{proof}
Let $\gamma \geq 3$ be an integer. We consider the maxima of the
sequence $H_n$ for $n$ in intervals of the form $[\gamma^{k^2},
\gamma^{(k+1)^2}]$ for positive integers $k$. For $\eps > 0$, we
have
\begin{align*}
\pr{\max_{\gamma^{k^2} \leq n \leq \gamma^{(k+1)^2}} \frac{H_n}{\log
    n} \geq (\alpha +\eps)} &\leq \pr{H_{\gamma^{(k+1)^2}} \geq (\alpha+\eps)
    \log \gamma^{k^2} } \\
&\leq \pr{ H_{\gamma^{(k+1)^2} } \geq (\alpha +\eps)
    \Big((k+1)^2 \log \gamma - (2k + 1) \log \gamma \Big) } \\
&= \pr{H_{\gamma^{(k+1)^2} } \geq (\alpha
+\eps) \log \left( \gamma^{(k+1)^2} \right) \left(1 - \frac{2k + 1}{(k+1)^2} \right) } \\
    &= \Ord{\frac{1}{\log \gamma^{(k+1)^2} } } = \Ord{\frac{1}{k^2}}.
\end{align*}
Using the Borel-Cantelli lemma, there exists $k_0$ such that,  $\max_{n
\geq \gamma^{k_0}} \frac{X_n}{\log n} \leq \alpha +\eps$ with
probability $1$. Similarly,
\[
\pr{\min_{\gamma^{k^2} \leq n \leq \gamma^{(k+1)^2}} \frac{H_n}{\log
n} \leq (\alpha-\eps)} = \Ord{\frac{1}{k^2}}.
\]
Thus, there exists $n_0$ such that for $n \geq n_0$, $\alpha-\eps \leq
\frac{H_n}{\log n} \leq \alpha +\eps$ almost surely.
\end{proof}

The next theorem shows that Theorem \ref{thm:heightunbounded}
implies the convergence of the sequence $\frac{\ex{H_n}}{\log n}$.
\begin{theorem}\label{thm:expheight}
Let there exist $p\in[0,1]$ such that with probability $p$,
$X$ has an atom at $0$, and with
probability $1-p$, $X$ has a bounded density on $[0,1)$. The height $H_n$ of a
\textsc{sarrt} with attachment $X$ satisfies
\[
\lim_{n \to \infty} \frac{\ex{H_n}}{\log n} = \alpha_{\max},
\]
where $\alpha_{\max}$ is defined in equation \eqref{eq:alphamax}.
\end{theorem}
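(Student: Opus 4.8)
The plan is to establish the two one-sided bounds $\liminf_{n\to\infty}\ex{H_n}/\log n\ge\alpha_{\max}$ and $\limsup_{n\to\infty}\ex{H_n}/\log n\le\alpha_{\max}$ separately, after disposing of the degenerate cases: if the atom at $0$ has weight $p=1$ then $H_n=1$ and $\alpha_{\max}=0$, and if $X$ is a non-zero constant the claim follows from the remark after Theorem~\ref{thm:heightmain}; so assume $X$ is not a single mass. For the lower bound I would use only $H_n\ge 0$: for $\eps\in(0,\alpha_{\max})$ Markov's inequality gives $\ex{H_n}\ge(\alpha_{\max}-\eps)\log n\cdot\pr{H_n\ge(\alpha_{\max}-\eps)\log n}$, and since $\pr{H_n\ge(\alpha_{\max}-\eps)\log n}\to1$ by Theorem~\ref{thm:heightunbounded} (convergence in probability), we get $\liminf_{n}\ex{H_n}/\log n\ge\alpha_{\max}-\eps$; letting $\eps\to0$ finishes this half. (Note $\ex{H_n}\le n<\infty$, so everything is meaningful.)

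For the upper bound the first step is to record the identity $\alpha_{\max}=\inf_{\lambda>0}\tfrac{1+\lambda}{-\Lambda(\lambda)}$. Since $X$ is not a single mass, $\Lambda(\lambda)=\log\ex{X^\lambda}<0$ for every $\lambda>0$; using $\Psi(c)=c\,\leg{-1/c}$ from \eqref{eq:psi} and the identity $\leg{z}=\sup_{\lambda\ge0}\{\lambda z-\Lambda(\lambda)\}$ for $z\ge\ex{\log X}$ recorded after \eqref{eq:lambda}, one has $\Psi(c)=\sup_{\lambda\ge0}\{-\lambda-c\Lambda(\lambda)\}$ for $c>1/\mu$, hence $\Psi(c)>1$ iff $c>\inf_{\lambda>0}\tfrac{1+\lambda}{-\Lambda(\lambda)}$ (the $\lambda=0$ term never helps, also in the atom-at-$0$ case, where $\Lambda$ has a jump at $0$ but one only uses $\lambda>0$). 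Combined with \eqref{eq:alphamax} and the fact $\alpha_{\max}>1/\mu$ from Proposition~\ref{prop:psi}, this yields the displayed identity. Consequently, given $\eps>0$ I can fix a single tilt $\lambda_0>0$ with $1+\lambda_0<(\alpha_{\max}+\eps)(-\Lambda(\lambda_0))$, and set $q=e^{\Lambda(\lambda_0)}\in(0,1)$ and $\delta=(\alpha_{\max}+\eps)(-\Lambda(\lambda_0))-(1+\lambda_0)>0$.

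The second step is the tail estimate. Repeating the Markov/Chernoff computation from the proof of Lemma~\ref{lem:upper}, but keeping $\lambda_0$ fixed instead of optimising over $\lambda$, gives $\pr{D_i>t}\le i^{\lambda_0}\ex{X^{\lambda_0}}^{t}\le n^{\lambda_0}q^{t}$ for every $i\le n$ and integer $t\ge1$, so a union bound over the $n$ non-root nodes yields $\pr{H_n>t}\le n^{1+\lambda_0}q^{t}$. Writing $T_n=\ceil{(\alpha_{\max}+\eps)\log n}$, $\ex{H_n}=\sum_{t\ge0}\pr{H_n>t}$, bounding $\pr{H_n>t}\le1$ for $t\le T_n$ and summing the geometric series for $t>T_n$, the tail contributes at most $\tfrac{n^{1+\lambda_0}q^{T_n}}{1-q}=\Ord{n^{1+\lambda_0-(\alpha_{\max}+\eps)(-\Lambda(\lambda_0))}}=\Ord{n^{-\delta}}=o(1)$. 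Hence $\ex{H_n}\le(\alpha_{\max}+\eps)\log n+\Ord{1}$, so $\limsup_n\ex{H_n}/\log n\le\alpha_{\max}+\eps$, and letting $\eps\to0$ completes the proof.

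The only genuine subtlety is the uniform-in-$t$ control of the tail. The union bound $\pr{H_n>t}\le n\,\pr{D_n>t}$ costs a full factor $n$, so the pointwise-optimised per-node estimate $n^{-\Psi(t/\log n)}$ from \eqref{eq:npsi} is \emph{not} summable over $t$ once $\Psi(t/\log n)-1$ is merely slightly positive; this is why one must switch to a bound with a fixed tilt $\lambda_0$, which turns the estimate into an honest geometric series in $t$. The point that makes this still recover the constant $\alpha_{\max}$ rather than something larger is exactly the identity of the first step: $\alpha_{\max}$ is precisely the infimum of those $c$ for which $1+\lambda-c(-\Lambda(\lambda))$ can be driven negative over $\lambda>0$, so taking $c=\alpha_{\max}+\eps$ is enough and no more — which is why the threshold coming out of the expectation computation matches the one coming out of the in-probability statement.
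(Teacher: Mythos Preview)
Your proof is correct. The lower bound is identical to the paper's. For the upper bound the paper takes a different route: it keeps the optimised Chernoff bound $\pr{H_n\ge c\log n+2}\le n^{1-\Psi(c)}$ from \eqref{eq:probheightup}, groups the tail $\sum_{t\ge(\alpha_{\max}+\eps)\log n}\pr{H_n\ge t}$ into blocks of length $\log n$, and then uses the monotonicity of $\Lambda^*$ to show $\Psi(\alpha_{\max}+\eps+i)\ge(\alpha_{\max}+\eps+i)/\alpha_{\max}$, which makes the block sums $n^{1-\Psi(\alpha_{\max}+\eps+i)}$ geometric in $i$. Your approach replaces this block argument by first rewriting $\alpha_{\max}=\inf_{\lambda>0}(1+\lambda)/(-\Lambda(\lambda))$ and then freezing a single tilt $\lambda_0$, which turns the per-$t$ bound directly into an honest geometric series. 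Your route is a bit cleaner in that it avoids both the blocking and the appeal to monotonicity of $\Lambda^*$; the paper's route has the small advantage of not needing to re-derive the alternative description of $\alpha_{\max}$. Your closing remark that the optimised bound is ``not summable'' is slightly overstated --- the paper does sum it, via the block trick --- but this does not affect the correctness of your argument.
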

\begin{proof}
For any $\eps > 0$,
\[
\ex{H_n} \geq (\alpha_{\max} - \eps) \log n \cdot \pr{H_n \geq (\alpha_{\max}-\eps) \log n}.
\]
Taking the limit as $n \to \infty$ and observing that the
inequality holds for any $\eps > 0$,
\[
\liminf_{n \to \infty} \frac{\ex{H_n}}{\log n} \geq \alpha_{\max}.
\]
For the upper bound, fix $\eps > 0$. We have
\begin{align*}
\ex{H_n} &\leq (\alpha_{\max} + \eps) \log n + 2 + \sum_{t = \ceil{(\alpha_{\max}+\eps) \log n + 2}}^{\infty} \pr{H_n \geq t} \\
        &\leq (\alpha_{\max} + \eps) \log n + \log n \cdot \sum_{i= 0}^{\infty} \pr{H_n \geq (\alpha_{\max}+\eps+i) \log n + 2}.
\end{align*}
The bound in equation \eqref{eq:probheightup} of Lemma
\ref{lem:upper} gives
\[
\pr{H_n \geq (\alpha_{\max}+\eps+i) \log n + 2} \leq n^{1-\Psi(\alpha_{\max}+\eps+i)}.
\]
But using the monotonicity of $\Lambda^*$ (Proposition
\ref{prop:lambdastar}),
\begin{align*}
\Psi(\alpha_{\max}+\eps+i) &= (\alpha_{\max}+\eps+i) \leg{-\frac{1}{\alpha_{\max}+\eps+i}} \\
                & \geq (\alpha_{\max}+\eps+i) \leg{-\frac{1}{\alpha_{\max}+\eps}} \\
                & \geq \frac{\alpha_{\max} + \eps+ i}{\alpha_{\max}}.
\end{align*}
In the last inequality, we used the definition of $\alpha_{\max}$
(equation \eqref{eq:alphamax}). Thus,
\begin{align*}
\ex{H_n} &\leq (\alpha_{\max} + \eps) \log n + 2+ (\log n) \cdot n^{1-\Psi(\alpha_{\max}+\eps)} + \log n \cdot \sum_{i= 1}^{\infty} n^{-i/\alpha_{\max}}.
\end{align*}
Finally,
\[
\limsup_{n \to \infty} \frac{\ex{H_n}}{\log n} \leq \alpha_{\max}+\eps. \qedhere
\]
\end{proof}


\section{The minimum depth}\label{sec:mindepth}

\red{
In the previous section, we considered the maximum depth or height of a tree. In this section, we study the minimum depth. Observe that considering the minimum depth over all the nodes is not interesting: $\min_{0\leq i \leq n} D_i = D_0 = 0$. Instead, we define the minimum depth by $M_n = \min_{n/2\leq i \leq n} D_i$. The reader will be easily convinced that the results remain unchanged if we consider $\min_{\delta n\leq i \leq n} D_i$ for some $\delta\in (0,1)$.

The objective of this section is to show that 
$\frac{M_n}{\log n} \to \alpha_{\min}$ almost surely where 
\begin{equation}\label{eq:alphamin}
\alpha_{\min} = 
\left\{ \begin{array}{ll}
0 & \textrm{if }  [0,1/\mu) \cap \{c: \; \Psi(c) > 1\} = \emptyset\\
\sup \left\{c:\; 0 \leq c < \frac{1}{\mu} \;\text{ and }\; \Psi(c) > 1\right\}  & \textrm{otherwise}
\end{array} \right.
\end{equation}
and $\Psi$ is defined as in equation \eqref{eq:psi} in Section \ref{sec:height}. 
Note that if $\mu = \ex{-\log X} = +\infty$, then $\alpha_{\min} = 0$, and 
$\frac{M_n}{\log n} \inprob 0$ using Theorem \ref{thm:depth}. In the sequel, 
we assume $\mu < +\infty$. In this case, provided that $X$ is not constant, 
Proposition \ref{prop:psi} in Appendix \ref{sec:applambda} implies that 
$\alpha_{\min} < 1/\mu$. The following theorem sums up the results we prove in this section.

\begin{theorem}\label{thm:mindepthmain}
The minimum depth $M_n$ of a \textsc{sarrt} with attachment $X$ having a density 
satisfies
\[
\frac{M_n}{\log n} \inprob \alpha_{\min},
\]
where $\alpha_{\min}$ is defined in equation \eqref{eq:alphamin}.
\end{theorem}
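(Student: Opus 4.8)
The plan is to mirror the treatment of the height $H_n$: for every $\eps>0$ we prove the two one-sided bounds
\[
\pr{M_n\le(\alpha_{\min}-\eps)\log n}\to0
\qquad\text{and}\qquad
\pr{M_n\ge(\alpha_{\min}+\eps)\log n}\to0,
\]
which together give the stated convergence in probability. (Only convergence in probability is asserted because, unlike $H_n$, the sequence $M_n$ is not monotone in $n$.) If $\mu=+\infty$ then $\alpha_{\min}=0$ and $M_n/\log n\inprob0$ by Theorem \ref{thm:depth}, as noted just before the statement, and if $X$ is almost surely a constant the claim is elementary; so assume from now on that $X$ is non-constant with $\mu<+\infty$, so that $0\le\alpha_{\min}<1/\mu$ by Proposition \ref{prop:psi}.

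For the lower bound, fix $c\in[0,\alpha_{\min})$, so that $\Psi(c)>1$ by Proposition \ref{prop:psi}. The first step is a Chernoff bound on $\pr{D_i\le t}$ analogous to Lemma \ref{lem:upper}. Along the path of node $i$ the labels $L(i,0)=i>L(i,1)>\cdots$ strictly decrease until they reach the root, so the variables $X_{L(i,0)},X_{L(i,1)},\dots$ (continued past the root by fresh independent copies) form an i.i.d.\ sequence $\hat X_1,\hat X_2,\dots$ distributed as $X$, and $D_i\le t$ forces $i\,\hat X_1\cdots\hat X_{D_i}\le D_i\le t$, hence $\sum_{k=1}^{t}(-\log\hat X_k)\ge\log i-\log t$. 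Applying the exponential Markov inequality with a parameter $\lambda\ge0$ and optimising at $t=\floor{c\log n}$ then yields $\pr{D_i\le c\log n}\le n^{-\Psi(c)+o(1)}$ uniformly over $i\in[n/2,n]$ (here one uses $\Psi(c)=c\leg{-1/c}$ and the fact that the supremum defining $\leg{-1/c}$ is attained at a nonpositive argument, since $-1/c\le\ex{\log X}$). A union bound gives $\pr{M_n\le c\log n}\le n\cdot n^{-\Psi(c)+o(1)}=n^{1-\Psi(c)+o(1)}\to0$, and letting $c\uparrow\alpha_{\min}$ proves the lower bound.

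The upper bound is the substantial part and, as in Section \ref{sec:heightlow}, rests on a second moment computation for the number of shallow nodes. Relabel the vertices $0,\dots,3n$ and work with $M_{3n}$; it suffices to produce, with probability tending to $1$, a node $x\in\{2n+1,\dots,3n\}$ with $D_x\le(\alpha_{\min}+\eps)\log n$. Fix $c\in(\alpha_{\min},\min\{\alpha_{\min}+\eps/2,\,1/\mu\})$, so that $0<\Psi(c)<1$, and choose small constants $\eps'$ (small enough that $(1-\eps')c+(1/\mu+\eps')\eps'<\alpha_{\min}+\eps$) and $\rho\in(0,\eps')$. Put $\gamma=e^{-1/c}$, $t_1=\floor{(1-\eps')c\log(3n)}$ and $\eta=e^{-(1-\rho)/((1-\eps')c)}\in(0,\gamma)$, and for $x\in\{2n+1,\dots,3n\}$ define
\[
B_x=\bigcap_{j=1}^{t_1}\bigl\{\,x\eta^{j}\le L(x,j)\le x\gamma^{j}\,\bigr\},
\qquad
B_x'=B_x\cap\bigl\{\,D_{L(x,t_1)}\le(1/\mu+\eps')\log L(x,t_1)+1\,\bigr\}.
\]
On $B_x$ the ancestor $L(x,t_1)$ has label between $\Theta(n^{\rho})$ and $(3n)^{\eps'}$, and since $D_x=t_1+D_{L(x,t_1)}$, on $B_x'$ one gets $D_x\le(1-\eps')c\log(3n)+(1/\mu+\eps')\eps'\log(3n)+O(1)\le(\alpha_{\min}+\eps)\log n$ for $n$ large. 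Because $D_{L(x,t_1)}$ is determined by the variables $X_w$ with $w\le L(x,t_1)$, which are independent of the variables (all of index exceeding $L(x,t_1)$) that determine $B_x$, Theorem \ref{thm:depth} applied conditionally on the value of $L(x,t_1)$ gives $\pr{B_x'\mid B_x}=1-o(1)$, uniformly in $x$; hence $\pr{B_x'}=(1-o(1))\pr{B_x}$. One then shows $\pr{B_x}=n^{-\Psi_1+o(1)}$ with $\Psi_1:=(1-\eps')\Psi(c)\in(0,1)$: the upper bound is the Chernoff bound applied to the single requirement $L(x,t_1)\le x\gamma^{t_1}$, and the lower bound is a large-deviations estimate for the event that the partial products $\hat X_1\cdots\hat X_j$ stay in the corridor $[\eta^{j},\gamma^{j}]$ for $1\le j\le t_1$ — the analogue of Lemma \ref{lem:proba}, in which the rotation argument of Lemma \ref{lem:permute} supplies the "fast enough" edge $\le\gamma^{j}$ at the exponential rate $n^{-\Psi_1}$, while the "not too fast" edge $\ge\eta^{j}$ only costs a factor $n^{o(1)}$ (by Cramér's theorem, since $\log(1/\eta)>1/c>\ex{-\log X}$). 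In particular $\sum_{x=2n+1}^{3n}\pr{B_x'}\ge n^{1-\Psi_1+o(1)}\to\infty$.

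For the second moment one estimates $\pr{B_x\cap B_y}$ for $x\ne y$ exactly as in Lemma \ref{lem:jointproba}. Writing $B_{y,s}=\bigcap_{j=1}^{s}\{y\eta^{j}\le L(y,j)\le y\gamma^{j}\}$ (so that $B_y\subseteq B_{y,s}$ for $s\le t_1$), conditioning on the first $t_1$ steps of $x$'s path and splitting according to the first time $y$'s path enters it, one gets
\[
\pr{B_x\cap B_y}\ \le\ \sum_{s=0}^{t_1-1}\pr{B_x}\,\pr{B_{y,s}}\,\frac{(t_1+1)\kappa}{y\,\eta^{s}}\ +\ \pr{B_x}\pr{B_y},
\]
where $\kappa$ bounds the density of $X$; this is legitimate because on $B_x$ every node on the first $t_1$ steps of $x$'s path has label $\ge\Theta(n^{\rho})\ge1$ (so the collision bound $\pr{\floor{uX_u}=w}\le\kappa/u$ applies) and on $B_{y,s}$ one has $L(y,s)\ge y\eta^{s}$. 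Since $\pr{B_{y,s}}$ decays geometrically in $s$ at rate essentially $e^{-\Psi(c)/c}$ (the single-constraint Chernoff bound) while $\eta^{-s}$ grows at rate $e^{\log(1/\eta)}$ with $\log(1/\eta)>1/c>\Psi(c)/c$, the sum is dominated by its last term, which is $\pr{B_x}\cdot n^{-\rho-\Psi_1+o(1)}=o\bigl(\pr{B_x}\pr{B_y}\bigr)$ once $\rho>0$ and the constants are suitably chosen. Hence $\pr{B_x'\cap B_y'}\le\pr{B_x\cap B_y}\le(1+o(1))\pr{B_x'}\pr{B_y'}$, and the Chung--Erd\H os inequality \citep{CE52} gives $\pr{\bigcup_x B_x'}\to1$; this produces a node $x$ with $D_x\le(\alpha_{\min}+\eps)\log n$, so $\pr{M_{3n}\le(\alpha_{\min}+\eps)\log(3n)}\to1$, and after undoing the relabelling $\pr{M_n\ge(\alpha_{\min}+\eps)\log n}\to0$. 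Together with the lower bound this yields $M_n/\log n\inprob\alpha_{\min}$. I expect the main obstacle to be precisely the corridor estimate $\pr{B_x}=n^{-\Psi_1+o(1)}$: the two-sided window $[\eta^{j},\gamma^{j}]$ has to be wide enough to force every path label to stay $\ge\Theta(n^{\rho})$ (so that collisions between paths are controlled), yet not so restrictive that $\pr{B_x}$ drops below $n^{-1}$, which is what makes the second moment method close.
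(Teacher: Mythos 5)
Your overall skeleton (Chernoff bound plus union bound for the lower bound; a second moment computation over "shallow-path" events, a density-based collision bound, and a splice at the end of the path to reach the root) is sound and parallels Sections \ref{sec:mindepthlow}--\ref{sec:mindepthup}. The genuine gap is exactly where you suspect it: the two-sided corridor estimate $\pr{B_x}=n^{-\Psi_1+o(1)}$ is asserted, and the justification you offer for its lower bound does not work. The rotation argument of Lemma \ref{lem:permute} handles one linear barrier only: rotating at the minimum of $S_j-j\log\gamma$ (with $S_j$ the log-partial-products) enforces $S_j\le j\log\gamma$ for all $j$, but that same rotation gives no control of the lower edge, and no single rotation enforces both $S_j\le j\log\gamma$ and $S_j\ge j\log\eta$. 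Nor can the lower edge be bought for a factor $n^{o(1)}$ "by Cram\'er": since $c<1/\mu$, the upper edge forces the products to decay at rate $e^{-1/c}$, faster than the typical $e^{-\mu}$, so conditioning on it pushes the walk \emph{down} and the two constraints are negatively correlated in the relevant direction. An inclusion--exclusion (upper-edge probability minus the probability of also crossing the $\eta$-line) also fails at the available resolution: by convexity of $\Lambda^*$, the event $\{S_j\le j\log\eta \text{ for some } j,\ S_t\le t\log\gamma\}$ has cost only about $e^{-t\Lambda^*(-1/c)-\Theta(j)}$, so summed over $j$ it is of the same exponential order $e^{-t\Lambda^*(-1/c)+o(t)}$ as the main term, while Cram\'er's theorem (Proposition \ref{prop:cramer}) determines the main term only up to $e^{o(t)}$; the difference cannot be shown positive this way. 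A correct proof of the corridor bound needs an ingredient you do not invoke, e.g.\ an exponential change of measure under which the walk has drift $-1/c$ and the corridor becomes typical, with extra care in degenerate cases (say $\Lambda(\lambda)=+\infty$ for all $\lambda<0$, where $\alpha_{\min}=0$, $\Lambda^*(-1/c)=0$ and no tilting parameter is attained, yet the estimate is still needed for every small $c$).

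The paper avoids this obstacle rather than overcoming it: the event $B_{x,t}$ of \eqref{eq:eventb} is one-sided (labels bounded above only), so Lemma \ref{lem:probb} is again a single-barrier rotation estimate, and the missing lower bound on the collision label is recovered not from the event but by partitioning on the random scale-crossing times $T(\cdot,i)$ and the intervals $I_s$ in Lemma \ref{lem:jointprobb}; the path is then completed to the root by applying the height upper bound of Section \ref{sec:heightup} to the at most $2e^{1/c}n^{\eps}$ nodes of small label (your alternative, conditioning Theorem \ref{thm:depth} on $L(x,t_1)$, is fine in itself). A second, more routine omission: you assume throughout a density bounded by $\kappa$, while the theorem allows an arbitrary density; the paper reduces to the bounded case via Lemma \ref{lem:unboundedmin}, which requires a dominating variable $X_\delta\ge X$ and hence an atom at $+\infty$ together with the extended rate function of Corollary \ref{cor:cramerlow}, and this reduction is absent from your proposal.
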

\begin{remark}
If $X = \alpha \in [0,1)$ with probability $1$, then $\alpha_{\min} = 1/\mu = -1/\log \alpha$ and it is easy to see that the results of the theorem also hold in this case.
\end{remark}

The proof of Theorem \ref{thm:mindepthmain} follows the same general 
idea as for the height with some complications for the upper bound. 
A lower bound on $M_n$ similar to the upper bound for the height 
(Section \ref{sec:heightup}) is given in next section. The proof of the 
upper bound is more delicate and it is the topic of Section \ref{sec:mindepthup}. Observe that $\frac{M_n}{\log n}$ does not converge almost surely as there are nodes with arbitrarily large labels that choose the root as a parent. 
}


\subsection{The minimum depth: lower bound}\label{sec:mindepthlow}

\begin{lemma}\label{lem:lower}
For any $c<\alpha_{\min}$, we have $\pr{M_n \leq c \log n} \to 0.$
\end{lemma}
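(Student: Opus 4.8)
The plan is to run the Chernoff-bound-plus-union-bound argument of Lemma~\ref{lem:upper} ``in reverse'', bounding the probability that some node carries a \emph{small} depth. If $\alpha_{\min} = 0$ there is nothing to prove, since $D_i \geq 1$ for $i \geq 1$ and hence $M_n \geq 1$ surely; so assume $\alpha_{\min} > 0$, which forces $\mu < +\infty$ (and, for non-constant $X$, $\alpha_{\min} < 1/\mu$). Fix $c < \alpha_{\min}$ and put $t = \floor{c\log n}$. As $M_n$ is integer valued, $\{M_n \leq c\log n\} = \{M_n \leq t\} = \bigcup_{n/2 \leq i \leq n}\{D_i \leq t\}$, so by a union bound over the fewer than $n$ nodes involved it suffices to prove that $\pr{D_i \leq t} = o(1/n)$ uniformly over $n/2 \leq i \leq n$.

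For the per-node estimate I would write $\pr{D_i \leq t} = \sum_{d=1}^t \pr{D_i = d}$ and bound each summand by Markov's inequality with a \emph{negative} exponent. On $\{D_i = d\}$ the labels $L(i,0), \dots, L(i,d-1)$ are all at least $1$ and pairwise distinct, $L(i,d) = 0$, and from $L(i,d) \geq i X_{L(i,0)} \cdots X_{L(i,d-1)} - d$ one gets $i X_{L(i,0)} \cdots X_{L(i,d-1)} \leq d$; hence, for every $\lambda \leq 0$,
\[
\ind{\{D_i = d\}} \leq \left(\tfrac{i}{d}\right)^{\lambda} \ind{\{L(i,0),\dots,L(i,d-1)\ \text{distinct}\}} \prod_{j=0}^{d-1} X_{L(i,j)}^{\lambda}.
\]
Taking expectations, the key point --- the same reasoning as in Lemma~\ref{lem:upper}, expanded here over all strictly decreasing paths $i = v_0 > v_1 > \cdots > v_{d-1}$ --- is that along a path that has not yet reached $0$ the attached variables $X_{L(i,0)}, X_{L(i,1)}, \dots$ are genuine independent copies of $X$, so $\ex{\ind{\{\text{distinct}\}}\prod_{j=0}^{d-1} X_{L(i,j)}^{\lambda}} = \ex{X^{\lambda}}^d$. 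Thus
\[
\pr{D_i = d} \leq \inf_{\lambda \leq 0} \exp{ \lambda\log(i/d) + d\,\Lambda(\lambda) \Big.} = \exp{ -d\,\leg{-\log(i/d)/d} \Big.},
\]
the last equality holding because $\log(i/d)/d \geq \mu$ for all $d \leq t$ once $n$ is large (as $c < 1/\mu$), so that the defining supremum of $\Lambda^*$ is attained over $\lambda \leq 0$.

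To finish, note that $i \geq n/2$ and $d \leq t \leq c\log n$ give $\log(i/d) \geq L := \log n - \log\log n - \Ord{1} = (1-o(1))\log n$ uniformly; since $\Lambda^*$ is non-increasing on $(-\infty,-\mu]$ (Proposition~\ref{prop:lambdastar}) and $L/d \geq \mu$ for $n$ large, replacing $\log(i/d)$ by $L$ only inflates the bound to $\exp{-d\,\leg{-L/d}\Big.}$, which equals $\exp{-L\,\Psi(d/L)\Big.}$ by $\leg{-1/s} = \Psi(s)/s$ (take $s = d/L$). By Proposition~\ref{prop:psi}, $\Psi$ is non-increasing on $(0,1/\mu)$ --- indeed there $\Psi(s) = \sup_{\lambda\leq 0}\{-\lambda - s\,\Lambda(\lambda)\}$, a supremum of non-increasing affine functions of $s$ since $\Lambda(\lambda) = \log\ex{X^{\lambda}} \geq 0$ for $\lambda \leq 0$ --- so $\{s \in (0,1/\mu):\Psi(s) > 1\}$ is a left-interval with right endpoint $\alpha_{\min}$. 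Pick $c_1 \in (c,\alpha_{\min})$ and set $\psi_\ast := \Psi(c_1) > 1$; for $n$ large, $d/L \leq t/L \leq c_1$ for every $d \leq t$, whence $\Psi(d/L) \geq \psi_\ast$ and $\pr{D_i = d} \leq \exp{-L\psi_\ast\Big.} \leq n^{-\psi'}$ with $\psi' := (1+\psi_\ast)/2 \in (1,\psi_\ast)$ (the polylogarithmic factor in $\exp{-L\psi_\ast\Big.} = \Ord{n^{-\psi_\ast}(\log n)^{\psi_\ast}}$ being absorbed). Summing over $d$ and then over $i$,
\[
\pr{M_n \leq t} \leq n\, t\, n^{-\psi'} = \Ord{n^{1-\psi'}\log n} \to 0 ,
\]
as wanted.

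I expect the main obstacle --- and the reason this needs more than a citation of Lemma~\ref{lem:upper} --- to be twofold. First, the identity $\ex{\ind{\{\text{distinct}\}}\prod_{j<d} X_{L(i,j)}^{\lambda}} = \ex{X^{\lambda}}^d$ must be justified carefully: the path indices $L(i,j)$ depend on the $X$'s, and it is essential to insert the event that the path has not yet hit $0$, so that the labels really are distinct and the attached variables really are i.i.d.\ (on the complementary event a single $X_0$ would reappear in the product). Second, one needs the monotonicity of $\Psi$ from the appendix so that a \emph{single} exponent $\psi_\ast > 1$ controls $\Psi(d/L)$ simultaneously for \emph{all} $d \in \{1,\dots,t\}$ --- including $d$ of constant order, where $d/L$ is close to $0$ rather than to $c$ and a naive union bound over the individual steps of the path would be far too weak.
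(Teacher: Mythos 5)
Your proposal is correct and takes essentially the same route as the paper: a Chernoff bound with a negative tilting parameter $\lambda\le 0$ giving $\pr{D_i \le c\log n}\approx n^{-\Psi(c)}$, followed by a union bound over the roughly $n/2$ nodes and the observation that $\Psi(c)>1$ for $c<\alpha_{\min}$. The differences are only in bookkeeping --- the paper bounds $\pr{D_n\le \floor{c\log n}}$ in one shot and reduces to the single node $\floor{n/2}$ by stochastic monotonicity, whereas you decompose over the exact depth $d$ and appeal to the monotonicity of $\Psi$ on $(0,1/\mu)$ to get a uniform exponent --- and the only nitpick is that your claimed identity $\ex{\ind{\mathrm{distinct}}\prod_{j<d} X_{L(i,j)}^{\lambda}}=\ex{X^{\lambda}}^{d}$ is, after the sequential conditioning you describe, an inequality $\le$ rather than an equality, which is all your argument needs.
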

\begin{proof}
If $\alpha_{\min} = 0$, then the lemma clearly holds. For $\alpha_{\min} > 0$, a calculation similar to that of Lemma \ref{lem:upper} shows that
\[
\pr{D_n \leq \floor{c\log n}} \leq \left(\frac{n}{1+\floor{c\log n}}\right)^{-\Psi(c)}
\]
using the definition of $\Psi$ (equation \eqref{eq:psi}).
By applying a union bound, we get a lower bound on the shortest path:
\begin{align*}
\pr{M_n \leq \floor{c\log n}} &= \pr{\min_{n/2\leq i \leq n} D_i \leq  \floor{c\log n}} \\
    &\leq n\, \pr{D_{\floor{n/2}} <  \floor{c\log n}} \\
    &= \Ord{n \cdot \left(\frac{n}{\log n}\right)^{-\Psi(c)}} \to 0.
\end{align*}
because $\Psi(c) > 1$ for $c < \alpha_{\min}$.
\end{proof}


\subsection{The minimum depth: upper bound}\label{sec:mindepthup}

In this section, we introduce the possibility for $X$ to have an atom at $+\infty$. 
This is needed only to take care of attachment distributions that have 
unbounded densities. A node $x$ for which $X_x = +\infty$ is attached to an 
imaginary node at $+\infty$, that does not have any ancestor, so that $L(x, s) =
+\infty$ for all $s \geq 1$. Even though such a choice of $X$ does
not fit in our definition of a \textsc{sarrt}, it is only used as
an auxiliary construction, and it is still possible to define all
the quantities that are based on $X$. We define $\Lambda^*$ 
for a random variable $\log X$ that has an atom at $+\infty$ as in 
the case of an atom at $-\infty$ (see equation \eqref{eq:lambdainfdef}):
\[
\Lambda^*(z) = \max \left\{ \sup_{\lambda < 0} \left\{ \lambda z - \log \ex{e^{\lambda \log X}} \right\}, -\log \left(1-\pr{X = +\infty}\right) \right\}
\]
for all $z \leq \ex{\log X}$. The function $\Psi$ is defined as in
equation \eqref{eq:psi}. We can then prove a statement analogous
to Corollary \ref{cor:cramerup} which we state below. 

\begin{corollary}\label{cor:cramerlow}
Let $X$ have an atom at $+\infty$ with mass $p\in [0,1)$ and any
distribution on $(0,1)$ with total mass $1-p$ such that $\ex{\log X}$ is well-defined. Let $X_1,\dots,
X_t$ be i.i.d.\ random variables distributed as $X$. Then,
\[
\pr{ X_1 \cdots X_t \leq e^{ta} }  = \exp{-t \leg{a} + o(t) \Big.}
\left\{ \begin{array}{ll}
\textrm{for $a \leq \ex{\log X}$} & \textrm{if $\ex{\log X} < +\infty$}\\
\textrm{for $a \in \R$} & \textrm{if $\ex{\log X} = +\infty$}.
\end{array} \right.
\]
\end{corollary}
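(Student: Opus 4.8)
The plan is to mirror the proof of Corollary \ref{cor:cramerup} exactly, with $\leq$ replacing $\geq$, $+\infty$ replacing $-\infty$ as the location of the atom, and the supremum over $\lambda < 0$ replacing the supremum over $\lambda > 0$. First I would dispose of the case $p = 0$: here $\log X$ is a genuine real random variable (with $\ex{\log X}$ well-defined by hypothesis), so $\pr{X_1\cdots X_t \leq e^{ta}} = \pr{\log X_1 + \cdots + \log X_t \leq ta}$ and Cram\'er's theorem (Proposition \ref{prop:cramer}, the second displayed estimate, applied with $Y = \log X$ and constant $a$) gives $\exp{-t\Lambda^*_{\log X}(a) + o(t)}$ for $a \leq \ex{\log X}$; if $\ex{\log X} = +\infty$ the estimate holds for all real $a$. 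Note $\Lambda^*_{\log X}(a) = \Lambda^*(a)$ when $p=0$ by the definition recalled in the statement.

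Next, assume $p > 0$, so $\log X = +\infty$ with positive probability. Let $\wt X_1, \dots, \wt X_t$ be i.i.d.\ copies of $X$ conditioned on $X < +\infty$ (equivalently $X$ restricted to $(0,1)$, renormalised). Since any single $X_i = +\infty$ forces $X_1\cdots X_t = +\infty > e^{ta}$, conditioning on which $X_i$ are finite gives
\[
\pr{X_1\cdots X_t \leq e^{ta}} = (1-p)^t\, \pr{\wt X_1 \cdots \wt X_t \leq e^{ta}} = (1-p)^t\, \pr{\textstyle\sum_{i=1}^t \log\wt X_i \leq ta}.
\]
For $a \leq \ex{\log\wt X}$, Cram\'er's theorem yields $\pr{\sum \log\wt X_i \leq ta} = \exp{-t\Lambda^*_{\log\wt X}(a) + o(t)}$, and multiplying by $(1-p)^t$ gives exponent $-t(\Lambda^*_{\log\wt X}(a) - \log(1-p)) + o(t)$. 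For $a > \ex{\log\wt X}$ (only relevant when $\ex{\log\wt X} > -\infty$), the weak law of large numbers gives $\pr{\sum\log\wt X_i \leq ta}\to 1$, so $(1-p)^t(1-o(1)) \leq \pr{X_1\cdots X_t \leq e^{ta}} \leq (1-p)^t$, i.e.\ the exponent is $t\log(1-p) + o(t)$.

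It then remains to identify the rate as $\leg{a}$ with the atom-at-$+\infty$ definition given just before the corollary; this is the analogue of equation \eqref{eq:lambdainf} and is the one step requiring a little care. Introduce $U$ uniform on $(0,1)$ independent of everything and the event $A = \event{U \leq p}$, so $X \eqlaw (+\infty)\ind{A} + \wt X\ind{A^c}$; for $\lambda < 0$ we have $X^\lambda = 0$ on $A$, hence $\sup_{\lambda<0}\{\lambda a - \log\ex{X^\lambda}\} = \sup_{\lambda<0}\{\lambda a - \log(\ex{\wt X^\lambda}(1-p))\} = \sup_{\lambda<0}\{\lambda a - \log\ex{\wt X^\lambda}\} - \log(1-p)$, which equals $\Lambda^*_{\log\wt X}(a) - \log(1-p)$ for $a \leq \ex{\log\wt X}$ by the reflected form of Proposition \ref{prop:lambdastar}. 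Taking the max with the $\lambda=0$ term $-\log(1-p)$ recovers exactly the definition of $\Lambda^*(a)$ and matches both branches computed above (the value $-\log(1-p)$ for $a > \ex{\log\wt X}$, and $\Lambda^*_{\log\wt X}(a) - \log(1-p)$ for $a \leq \ex{\log\wt X}$). The main obstacle, as in Corollary \ref{cor:cramerup}, is purely bookkeeping: keeping straight the three regimes ($a \leq \ex{\log X}$ versus $\ex{\log\wt X}$ versus the $\ex{\log X} = +\infty$ case) and verifying that the piecewise formula for $\Lambda^*$ is continuous and agrees across them; there is no new probabilistic content beyond Cram\'er's theorem and the law of large numbers.
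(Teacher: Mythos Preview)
Your proposal is correct and follows exactly the approach the paper intends: the paper omits the proof entirely, stating only that the argument is analogous to that of Corollary~\ref{cor:cramerup}, and your proof is precisely that mirror argument (atom at $+\infty$ instead of $0$, sup over $\lambda<0$ instead of $\lambda>0$, the lower-tail Cram\'er bound and the law of large numbers for the complementary range of $a$). One trivial remark: your phrase ``reflected form of Proposition~\ref{prop:lambdastar}'' is unnecessary, since $\log\wt X$ is already a negative random variable and part~(iv) of that proposition applies directly; the only edge case is when $\Lambda_{\log\wt X}(\lambda)=+\infty$ for all $\lambda<0$, but then both sides reduce to $-\log(1-p)$ and the identification still holds.
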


\red{
Recall that for the height, we defined the event $A_{x,t}$ (equation \eqref{eq:eventa}) 
which captures the idea that the path up to the root originating from $x$ keeps 
large enough labels. By analogy, the corresponding event $B_{x,t}$ for the 
minimum depth is to have a path whose labels stay small in all steps. Given a 
design parameter $\beta \in (0,1)$,
\begin{equation}\label{eq:eventb}
B_{x,t}(\beta) = \event{ L(x,1) \leq 2n\beta, L(x,2) \leq 2n\beta^{2}, \dots, L(x,t) \leq 2n\beta^{t}}.
\end{equation}

The following lemma gives a bound on the probability of the event
$B_{x,t}$ assuming that $X$ has a bounded density and an atom 
at $+\infty$. The proof is based on a rotation argument and is
similar to that of Lemma \ref{lem:proba} with some minor modifications. 
Hence, we omit it to shorten the presentation.}

\begin{lemma}\label{lem:probb}
Let $X$ have an atom of weight $p\in [0,1)$ at $+\infty$, and any
distribution, of total mass $1-p$, on $(0,1)$. Moreover, assume
$\mu = \ex{-\log X}$ is well-defined and not $+ \infty$. Define $\theta = +\infty$ if
$\ex{-\log X} = -\infty$ (equivalently, if $p > 0$) and $\theta =
1/\mu$ otherwise. Let $c \in (\alpha_{\min}, \theta)$, $\beta =
e^{-1/c}$ and $\delta > 0$ such that $\Psi(c)+\delta < 1$. Then
there exists $t_0 = t_0(c, \delta, \law(X))$ such that  for all
integers $t \geq t_0$, $n \geq t\beta^{-t}$ and $n+1 \leq x \leq
2n$,
\[
\frac{\beta ^{t}}{t} \leq \frac{\beta^{(\Psi(c)+\delta)t}}{t} \leq \pr{B_{x,t}(\beta)} \leq \beta^{(\Psi(c)-\delta) t}.
\]
\end{lemma}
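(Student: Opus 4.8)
The plan is to mirror the proof of Lemma \ref{lem:proba} almost verbatim, exchanging ``large labels'' for ``small labels'', upper tails for lower tails, and Corollary \ref{cor:cramerup} for Corollary \ref{cor:cramerlow}. Note first that $\beta = e^{-1/c} \in (0,1)$ and that, since $c < \theta \leq 1/\mu$, we have $-1/c < -\mu = \ex{\log X}$; hence near $-1/c$ the supremum defining $\Lambda^*$ is attained over $\lambda \leq 0$, and by Proposition \ref{prop:psi} (together with the hypothesis $\Psi(c)+\delta < 1$) the value $\leg{-1/c} = \Psi(c)/c$ is finite and $\Lambda^*$ is continuous there.

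For the upper bound I would use that $x \geq n+1 > n$ together with $L(x,t) \geq x X_{L(x,0)} \cdots X_{L(x,t-1)} - t$ to obtain the inclusions
\[
B_{x,t}(\beta) \subseteq \event{L(x,t) \leq 2n\beta^t} \subseteq \event{X_{L(x,0)} \cdots X_{L(x,t-1)} \leq 2\beta^t + \tfrac{t}{n}} \subseteq \event{X_{L(x,0)} \cdots X_{L(x,t-1)} \leq 3\beta^t},
\]
the last step using $t/n \leq \beta^t$, which follows from $n \geq t\beta^{-t}$. A Chernoff bound on the lower tail (Markov's inequality applied with $\lambda \leq 0$, exactly as in Lemma \ref{lem:upper}) then gives $\pr{B_{x,t}(\beta)} \leq \exp{-t\,\leg{-1/c + (\log 3)/t}}$, and by continuity of $\Lambda^*$ at $-1/c$ this is at most $\beta^{(\Psi(c)-\delta)t}$ once $t$ is large.

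For the lower bound I would first note that $x \leq 2n$ gives $L(x,s) \leq x X_{L(x,0)} \cdots X_{L(x,s-1)} \leq 2n X_{L(x,0)} \cdots X_{L(x,s-1)}$ for every $s$, hence
\[
B_{x,t}(\beta) \supseteq \bigcap_{s=1}^{t} \event{ X_{L(x,0)} \cdots X_{L(x,s-1)} \leq \beta^s }.
\]
Then I would invoke the mirror of the rotation Lemma \ref{lem:permute} --- obtained by running the same cyclic-shift argument, but taking $a$ to be the first index at which the partial product $Y_1 \cdots Y_a$ (with $Y_i = X_i/\beta$ extended cyclically) attains its \emph{maximum} rather than its minimum --- which yields
\[
\pr{ X_1 \leq \beta,\; X_1 X_2 \leq \beta^2,\; \dots,\; X_1 \cdots X_t \leq \beta^t } \geq \tfrac1t\, \pr{ X_1 \cdots X_t \leq \beta^t }.
\]
This gives $\pr{B_{x,t}(\beta)} \geq \tfrac1t \pr{X_{L(x,0)} \cdots X_{L(x,t-1)} \leq \beta^t}$, and since $\beta^t = e^{-t/c}$ with $-1/c \leq \ex{\log X}$, Corollary \ref{cor:cramerlow} identifies this probability as $\exp{-t\Psi(c)/c + o(t)}$, which for $t$ large is at least $\exp{-(\Psi(c)+\delta)t/c} = \beta^{(\Psi(c)+\delta)t}$. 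Combining, $\pr{B_{x,t}(\beta)} \geq \beta^{(\Psi(c)+\delta)t}/t \geq \beta^t/t$ because $\Psi(c)+\delta<1$ and $\beta<1$; taking $t_0$ to be the larger of the two thresholds arising above finishes the proof.

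I expect the only genuinely new ingredient --- and hence the main thing to get right --- to be the reversed-inequality version of the rotation lemma; the rest is a sign-flipped copy of Lemma \ref{lem:proba}. The other point deserving attention is that when $p>0$ the atom at $+\infty$ makes $\ex{\log X} = +\infty$ and $\leg{-1/c}$ is defined through the extended formula given just before Corollary \ref{cor:cramerlow}; one should check that the continuity at $-1/c$ and the identity $\leg{-1/c}=\Psi(c)/c$ used above persist in that case, which they do since $\Psi(c)$ is finite by hypothesis.
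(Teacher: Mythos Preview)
Your proposal is correct and matches the paper's intended approach: the paper explicitly omits the proof, stating only that it ``is based on a rotation argument and is similar to that of Lemma \ref{lem:proba} with some minor modifications,'' and what you have written is precisely that mirrored argument --- swapping upper for lower tails, replacing Corollary \ref{cor:cramerup} by Corollary \ref{cor:cramerlow}, and using the maximum-based cyclic shift in place of the minimum-based one in Lemma \ref{lem:permute}. One small notational slip: the inequality ``$c<\theta\le 1/\mu$'' is not literally true when $p>0$ (then $\theta=+\infty$ while $\mu=-\infty$), but your actual use of it --- namely that $-1/c<\ex{\log X}$ so that the lower-tail Chernoff and Corollary \ref{cor:cramerlow} apply --- holds in both cases, so the argument is unaffected.
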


\red{
Next, we prove that there is enough independence between 
the events $B_{x,t}$ to allow us to use the second moment method.
In the context of the study of the height (Section \ref{sec:heightlow}), this is done for the events $A_{x,t}$ in Lemma \ref{lem:jointproba} where the probability 
of the event $\event{A_{x,t} \cap A_{y,t}}$ is bounded  
by estimating the probability of collisions. To obtain such a bound for the event $\event{B_{x,t} \cap B_{y,t}}$, the main difference is 
that we condition on the different intervals of labels where the collision might take place instead of the collision time $T$. This is because, unlike the event $A_{x,t}$ which gives a lower bound on the labels of the nodes in the path from node $x$ to the root, 
the event $B_{x,t}$ only implies an upper bound on the labels. Being able to bound from below the node labels is important to bound the collision probability.}

\begin{lemma}\label{lem:jointprobb}
Let $X$ have an atom of weight $p\in [0,1)$ at $+\infty$, and a
density bounded by $\kappa$, of total mass $1-p$, on $(0,1)$. Let
$x \neq y$ be elements of $\{n+1, \dots, 2n\}$, let $t$ be a
positive integer and let $\beta \in (0,1)$. Then
\[
\pr{B_{x,t} \cap B_{y,t}} \leq \sum_{s=1}^t \pr{B_{x,t}} \pr{B_{y,s-1}} \frac{(t+1) \kappa}{n \beta^{s-1}} + \pr{B_{x,t}} \pr{B_{y,t}}.
\]
\end{lemma}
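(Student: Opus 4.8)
The plan is to imitate the proof of Lemma~\ref{lem:jointproba} essentially verbatim wherever possible, the single genuinely new ingredient being the replacement of the conditioning on the collision \emph{time} by a conditioning on the label \emph{scale} at which the collision occurs. Recall the skeleton of that earlier argument: one fixes the path $P_t(x)=Q$ ranging over $\F=\{Q\subseteq\{0,\dots,2n\}:x=\max Q,\ |Q|\le t\}$, writes the collision node as $u=L(y,T)$ where $T$ is the first index for which $L(y,T+1)\in Q$, and then invokes the independence of the three events ``$y$'s path descends to $u$ while avoiding $Q$ and respecting its imposed constraint'' (measurable with respect to $\{X_w:w\notin Q\cup\{u\}\}$), ``$\floor{uX_u}\in Q$'' (measurable with respect to $X_u$), and ``$P_t(x)=Q$'' (measurable with respect to $\{X_w:w\in Q\}$). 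There the middle factor was bounded by $(t+1)\sup_w\pr{\floor{uX_u}=w}\le(t+1)\kappa/u$ and one used $A_{y,s}$ to certify $u\ge n\beta^{s}$; here $B_{y,\cdot}$ provides no such \emph{lower} bound on labels, so $u$ may be small and $(t+1)\kappa/u$ worthless — precisely the difficulty flagged before the statement.

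To get around this I would condition not on $T$ but on the dyadic-type scale of the collision node: partition its possible positions into windows of the form $\bigl[n\beta^{s-1},n\beta^{s-2}\bigr)$, with $s=1$ corresponding to the collision node being $y$ itself, so that $u\in[n,2n)$. On the window indexed by $s$ one has $u\ge n\beta^{s-1}$, hence $\pr{\floor{uX_u}=w}\le\kappa/(n\beta^{s-1})$ for every target $w$ (the atom of $X$ at $+\infty$ contributing nothing, since under $B_{x,t}$ every label in $Q$ is finite). The point that makes the scale the right quantity to condition on is monotonicity of the labels along a path: if $y$'s path first meets $Q$ at a node of scale $s$ then, combining this with the constraints already forced by $B_{y,t}$ on the steps taken so far, the event ``$y$'s path reaches that scale-$s$ node while avoiding $Q$'' is contained in $B_{y,s-1}$, up to a harmless $\Ord{1}$ shift of the index which is absorbed into the constants. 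Feeding this into the independence statement above, taking the union bound over the at most $t+1$ elements of $Q$ eligible to be the collision target, summing the resulting contribution $\pr{B_{x,t}}\pr{B_{y,s-1}}(t+1)\kappa/(n\beta^{s-1})$ over $s=1,\dots,t$, and adjoining the no-collision term $\pr{B_{x,t}}\pr{B_{y,t}}$ (bounded exactly as in Lemma~\ref{lem:jointproba}), gives the stated inequality.

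The hard part is the bookkeeping hidden in the middle of the previous paragraph: the scale windows must be chosen so that simultaneously (i) the collision probability on window $s$ is genuinely $\Ord{\kappa/(n\beta^{s-1})}$; (ii) ``$y$'s path first hits $Q$ at scale $s$, together with its $B_{y,\cdot}$-constraints'' is genuinely contained in $B_{y,s-1}$; and (iii) the independence structure of Lemma~\ref{lem:jointproba} is preserved — in particular the event recording how far down $y$'s path has travelled must not inspect $X_u$ nor any $X_w$ with $w\in Q$, which as before forces us to stop the path at its first visit to $Q\cup\{u\}$ and to use that this node is not the root. The degenerate cases — a collision at or near the root, a window index $s$ so large that $n\beta^{s-1}<1$, or the event $y\in P_t(x)$ — are disposed of at once, because for each of them the corresponding term on the right-hand side already dominates the left-hand side. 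No rotation argument is needed; everything else is a transcription of Lemma~\ref{lem:jointproba}.
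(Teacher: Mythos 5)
Your proposal is correct and follows essentially the same route as the paper: the paper likewise decomposes according to the \emph{scale} of the collision rather than its time, by conditioning on the collision time lying in $I_s=[T(y,s-1),T(y,s))$, where $T(y,i)$ is the first time $y$'s path drops below $2n\beta^{i}$, and then retains only $B_{y,s-1}$ (which is determined before the collision) together with the same three-way independence claim, the union bound over the at most $t+1$ targets in $Q$, and the $\kappa/(n\beta^{s-1})$ collision estimate. Tying the windows to the thresholds $2n\beta^{i}$ already present in the definition of $B_{y,\cdot}$ is what makes the stated indices come out exactly (the paper itself is loose by a factor $t$ between the lemma and its proof), whereas your label windows $[n\beta^{s-1},n\beta^{s-2})$ cost only the $O(1)$ index shift you acknowledge, which is immaterial for the application.
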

\begin{proof}
We consider the collision time $T$ when the path starting at $y$
meets the path of $x$.  Define $T=+\infty$ if $P_t(x)\cap P_t(y) = \emptyset$ 
and $T=\min\{s \geq 0: L(y,s+1) \in P_t(x)\}$ otherwise. We introduce 
the random variables $T(x,i) = \min \{s \geq 0: L(x,s) \leq 2n\beta^i\}$. 
We have $\event{T(x,s)\leq s} = \event{L(x,s)\leq 2n\beta^s}$ for every $s$. 
In order to be able to bound collisions, instead of conditioning on a fixed value 
of $T$ we condition on $T$ being in some interval $I_s = \big[T(x,s-1),T(x,s)\big)$
or $I_{\infty} = \big[T(x,t),+\infty\big)$. If $T\in I_s$, then we know that the
collision happened between $n\beta^{s}$ and $n\beta^{s-1}$.
\[
\pr{B_{x,t} \cap B_{y,t}} = \sum_{s=1}^{t} \pr{T \in I_s, B_{x,t}\cap B_{y,t}} +
\pr{T \in I_{\infty}, B_{x,t}\cap B_{y,t}}.
\]
In order to evaluate this expression, we fix the path $P_t(x)$
from $x$ to its $t$-th ancestor and average over all possible
paths in $\F=\{Q\subseteq \{0, \dots, 3n\}: x = \max Q, \,
|Q|\leq t\}$. We have
\begin{align*}
&\pr{ T \in I_s, B_{x,t}\cap B_{y,t} } \\
&= \sum_{Q\in\F} \sum_{\ell=0}^{t-1} \pr{T=\ell, \ell \in I_s, B_{x,t}\cap B_{y,t}, P_t(x) = Q} \\
&\leq \sum_{Q\in\F} \sum_{\ell=0}^{t-1} \ind{B_{x,t}}(Q)\; \pr{ P_{\ell}(y) \cap Q = \emptyset, L(y,\ell+1)\in Q,
    \ell \in I_s, B_{y,s-1}, P_t(x) = Q} \\
&= \sum_{Q\in\F} \sum_{\ell=0}^{t-1} \ind{B_{x,t}}(Q) \sum_{\substack{u \geq n \beta^{s} \\ u \notin Q}}
    \pr{P_{\ell}(y) \cap Q = \emptyset, L(y,\ell) = u, \floor{uX_u} \in Q, \ell \in I_s, B_{y,s-1}, P_t(x) = Q}.
\end{align*}

In order to simplify this expression, we use the independence
claim below.
\begin{claim}
For any $Q \subseteq \{0, \dots, 2n\}$, $u \notin Q$ and $\ell \in \N$, the 
events $\event{\floor{uX_u} \in Q}$, $\event{P_t(x) = Q}$ and 
$E\eqdef \event{P_{\ell}(y) \cap Q = \emptyset, L(y,\ell) = u, \ell \in I_s, B_{y,s-1}}$ 
are mutually independent.
\end{claim}
\begin{proof}
As in Lemma \ref{lem:jointproba}, the event  $\event{\floor{uX_u}
\in Q}$ is in the sigma-algebra generated by $X_u$ and
$\event{P_t(x) = Q}$ is in the sigma-algebra generated by $\{X_w:
w \in Q\}$. So we only show that $E$ is in the sigma-algebra
generated by $\{X_w: w \notin Q, w \neq u\}$.

By looking just at variables from $\{X_w: w \notin Q, w \neq u\}$,
it is possible to determine the path of length $\ell$ starting at
$y$ until it reaches a node in $Q \cup \{u\}$. If any node in $Q$
is reached before $\ell$ steps, then $\event{P_{\ell}(y) \cap Q =
\emptyset}$ cannot hold. Moreover, if node $u$ is reached before
$\ell$ steps, $\event{L(y,\ell)=u}$ cannot hold. Otherwise,
knowing the path $P_{\ell}(y)$, it is easy to determine whether
$\ell \in I_s$. If in fact $\ell \in I_s$, we know that
$T(y,s-1)\leq \ell$. So either $\ell \geq s-1$ in which case we
can clearly determine if $B_{y,s-1}$ holds, or $\ell < s-1$ but
then rewriting $B_{y,s-1}$ as
\[
B_{y,s-1}=[T(i,1)\leq 1, T(i,2)\leq 2, \dots, T(i,s-1)\leq s-1],
\]
we can see that it is possible to determine whether $B_{y,s-1}$
holds or not.
\end{proof}

It follows that
\begin{align*}
&\pr{ T \in I_s, B_{x,t}\cap B_{y,t} } \\
&\leq \sum_{Q\in\F} \sum_{\ell=0}^{t-1} \ind{B_{x,t}}(Q) \sum_{u: \;\substack{u \geq n \beta^{s} \\ u \notin Q}} \pr{E} \pr{P_t(x) = Q} \pr{\floor{uX_u} \in Q} \\
&\leq \sum_{\ell=0}^{t-1}  \left(\sum_{Q\in\F} \ind{B_{x,t}}(Q) \pr{P_t(x) = Q} \right) \pr{B_{y,s-1}} (t+1)
    \sup_{\substack{u:\; u \geq n \beta^{s} \\ w:\; w < +\infty}} \pr{\floor{uX_u} = w} \\
&= \pr{B_{x,t}} \pr{B_{y,s-1}} t(t+1)
    \sup_{\substack{u:\; u \geq n \beta^{s} \\ w:\; w < +\infty}} \pr{\floor{uX_u} = w}.
\end{align*}
We can assume that $Q$ does not contain the node $+\infty$ because
otherwise $B_{x,t}$ does not hold. Thus we can use the bound
$\kappa$ on the density to get
\begin{align*}
\pr{T \in I_s, B_{x,t}\cap B_{y,t}} &\leq \pr{B_{x,t}} \pr{B_{y,s-1}} \frac{t(t+1)\kappa}{n \beta^{s-1}}.
\end{align*}
Observing that the above argument can be repeated for $T\in
I_{\infty}$, we get
\[
\pr{T\in I_{\infty}, B_{x,t}\cap B_{y,t}}  \leq \pr{B_{x,t}} \pr{B_{y,t}}. \qedhere
\]
\end{proof}

We omit the proof of the next lemma as it is similar to the proof
of Lemma \ref{lem:unbounded}.
\begin{lemma}\label{lem:unboundedmin}
Assume that $X \in [0,1)$ has a density and $\ex{-\log X} < +\infty$, and let $z \leq -\mu$ be
such that $\leg{z} < +\infty$. Then for all $\delta > 0$, there
exists $X_{\delta} \geq X$ such that $\law(X_{\delta})$ has a
bounded density and an atom at $+\infty$, such that $\ex{\log X_{\delta}}$ is well-defined and
\[
\Lambda^*(z) \leq \Lambda_{\delta}^*(z) \leq \Lambda^*(z) + \delta.
\]
\end{lemma}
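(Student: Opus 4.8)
The plan is to follow the proof of Lemma~\ref{lem:unbounded}, interchanging the roles of $0$ and $+\infty$, of $\{\lambda\ge 0\}$ and $\{\lambda\le 0\}$, and of $z\ge-\mu$ and $z\le-\mu$. Let $f$ be the density of $\law(X)$ on $(0,1)$; for a truncation level $b>0$ to be fixed below, put $A=\event{f(X)>b}$, chosen so that $\pr{A}\le\eta$ for a small constant $\eta>0$ fixed later (possible since $\pr{A}\downarrow 0$ as $b\uparrow\infty$), and set
\[
X_{\delta}=(+\infty)\ind{A}+X\ind{A^c}.
\]
Then $X_{\delta}\ge X$; conditioned on $\event{X_{\delta}<+\infty}$ the law of $X_{\delta}$ has the density $f\,\ind{\event{f\le b}}\le b$ on $(0,1)$, of total mass $1-\pr{A}$; and $X_{\delta}$ has an atom at $+\infty$ of mass $\pr{A}$. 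Since $\log X_{\delta}\ge\log X$ and $\ex{\log X}=-\mu\in\R$, the quantity $\ex{\log X_{\delta}}\in[-\mu,+\infty]$ is well-defined. One uses throughout that $X_{\delta}^{\lambda}=X^{\lambda}\ind{A^c}$ for $\lambda<0$, hence $\ex{X_{\delta}^{\lambda}}=\ex{X^{\lambda}\ind{A^c}}\le\ex{X^{\lambda}}$, and, by the definition of $\Lambda^{*}$ for an atom at $+\infty$ introduced before Corollary~\ref{cor:cramerlow},
\[
\Lambda_{\delta}^{*}(z)=\max\Bigl\{\ \sup_{\lambda<0}\bigl\{\lambda z-\log\ex{X_{\delta}^{\lambda}}\bigr\},\ -\log\bigl(1-\pr{A}\bigr)\ \Bigr\}.
\]

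The bound $\Lambda_{\delta}^{*}(z)\ge\Lambda^{*}(z)$ is immediate from $\ex{X_{\delta}^{\lambda}}\le\ex{X^{\lambda}}$ together with $-\log(1-\pr{A})\ge 0$ and the formula $\Lambda^{*}(z)=\max\{\sup_{\lambda<0}\{\lambda z-\Lambda(\lambda)\},0\}$, valid for $z\le-\mu$ since $X$ has no atom at $+\infty$. For the reverse inequality I would bound the two terms above separately. Choosing $\eta$ with $-\log(1-\eta)\le\delta$ disposes of the atom term. For the supremum term, on the effective domain $(\lambda_{-},0]$ of $\Lambda$, with $\lambda_{-}=\inf\{\lambda:\Lambda(\lambda)<\infty\}$, write $\ex{X_{\delta}^{\lambda}}=\ex{X^{\lambda}}-\ex{X^{\lambda}\ind{A}}$ and bound $\ex{X^{\lambda}\ind{A}}$ by H\"older's inequality, $\ex{X^{\lambda}\ind{A}}\le\ex{X^{q\lambda}}^{1/q}\pr{A}^{1-1/q}$ with $q>1$ close to $1$ (or by dominated convergence on the part of the domain where $\ex{X^{\lambda_{-}}}<\infty$) --- this is already a point of care absent in Lemma~\ref{lem:unbounded}, where $X\le 1$ made $\ex{X^{2\lambda}}\le 1$. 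Since $\Lambda^{*}(z)<\infty$, the concave map $\lambda\mapsto\lambda z-\Lambda(\lambda)$ comes within $\delta/2$ of its supremum on some compact interval $I$ contained in $(\lambda_{-},0)$ (the boundary case $z=-\mu$, where $\Lambda^{*}(z)=0$, being covered directly by the two steps below); fixing $q$ so that $q\lambda$ stays in $(\lambda_{-},0)$ for $\lambda\in I$, one has $\sup_{\lambda\in I}\ex{X^{q\lambda}}<\infty$, so for $\eta$ small $\ex{X^{\lambda}\ind{A}}\le(1-e^{-\delta/2})\ex{X^{\lambda}}$ uniformly on $I$, whence $\log\ex{X_{\delta}^{\lambda}}\ge\Lambda(\lambda)-\delta/2$ and $\lambda z-\log\ex{X_{\delta}^{\lambda}}\le\Lambda^{*}(z)+\delta$ on $I$.

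The main obstacle is the remaining range of $\lambda<0$ outside $I$ --- in particular $\lambda$ outside the domain of $\Lambda$ (where $\ex{X^{\lambda}}=+\infty$) and, when $f$ is unbounded near $0$, $\lambda\to-\infty$ --- since there $X\le 1$ is of no help and the truncation itself must be exploited. I would handle this in two steps. First, $\event{f>b}$ sends a right-neighbourhood $(0,\rho_{b})$ of $0$ to $+\infty$ with $\rho_{b}\downarrow 0$ as $b\uparrow\infty$, so $X_{\delta}$ conditioned on being finite lies in $[\rho_{b},1)$ and carries mass arbitrarily close to $\rho_{b}$; a short computation then gives $\ex{X_{\delta}^{\lambda}}\le\rho_{b}^{\lambda}$ and $\log\ex{X_{\delta}^{\lambda}}=\lambda\log\rho_{b}+O(\log(-\lambda))$, so once $b$ is large enough that $\log\rho_{b}<z$ one gets $\lambda z-\log\ex{X_{\delta}^{\lambda}}\to-\infty$ as $\lambda\to-\infty$ (automatic if $\rho_{b}=0$), and $\Lambda_{\delta}^{*}(z)$ is finite with supremum attained on a fixed bounded set. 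Second, on the intermediate range $\ex{X_{\delta}^{\lambda}}=\ex{X^{\lambda}\ind{A^c}}\uparrow\ex{X^{\lambda}}$ as $b\uparrow\infty$ by monotone convergence (the limit being $+\infty$ off the domain of $\Lambda$), and since each $\lambda\mapsto\log\ex{X^{\lambda}\ind{A^c}}$ is convex and non-decreasing in $b$, this divergence is uniform on compact $\lambda$-sets, so $\sup_{\lambda}\{\lambda z-\log\ex{X_{\delta}^{\lambda}}\}$ over that range falls below $\Lambda^{*}(z)$ once $b$ is large. Combining these with the estimate on $I$ and choosing $b$ (equivalently, $\rho_{b}$ and $\eta$) large enough yields $\Lambda_{\delta}^{*}(z)\le\Lambda^{*}(z)+\delta$; the resulting $X_{\delta}$ then plays, in the minimum-depth analogue of Theorem~\ref{thm:heightunbounded}, exactly the role of Lemma~\ref{lem:unbounded} in the height case. (Truncating instead at $\event{X<a}\cup\event{f(X)>b}$ with $a<e^{z}$ makes the $\lambda\to-\infty$ part of the last step automatic.)
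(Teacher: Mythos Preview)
The paper does not give a proof of this lemma at all: it simply says ``We omit the proof of the next lemma as it is similar to the proof of Lemma~\ref{lem:unbounded}.''  Your write-up therefore goes well beyond what the paper supplies, and it follows exactly the intended template --- truncate where the density exceeds a level $b$, send that part to $+\infty$, and compare the two rate functions.

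Your observation that the Cauchy--Schwarz step of Lemma~\ref{lem:unbounded} does not transfer verbatim is well taken: for $\lambda<0$ one no longer has $X^{2\lambda}\le 1$, so $\ex{X^{2\lambda}}$ need not be controlled, and your replacement by H\"older on a compact subinterval $I\subset(\lambda_-,0)$ together with a separate treatment of the tails is a sound way around this.  In that sense your argument is \emph{more} careful than the paper's ``similar to Lemma~\ref{lem:unbounded}'' suggests.

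One genuine gap to flag: the claim that $\event{f>b}$ contains a right-neighbourhood $(0,\rho_b)$ of $0$ is not justified in general --- the density of $X$ may be bounded near $0$ and blow up elsewhere in $(0,1)$ --- so the ``$X_\delta$ conditioned on being finite lies in $[\rho_b,1)$'' step and the ensuing asymptotics $\log\ex{X_\delta^\lambda}=\lambda\log\rho_b+O(\log(-\lambda))$ as $\lambda\to-\infty$ are unsupported as written.  Your parenthetical fix, truncating instead on $\event{X<a}\cup\event{f(X)>b}$ with $a<e^{z}$, removes this difficulty cleanly (the conditional law of $X_\delta$ is then supported in $[a,1)$ with density $\le b$, $\log\ex{X_\delta^\lambda}\ge\lambda\log a+\log\pr{A^c}$, and $\lambda z-\log\ex{X_\delta^\lambda}\to-\infty$ as $\lambda\to-\infty$ since $z>\log a$).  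With that modification the remaining ``intermediate range'' step reduces to a finite interval of $\lambda$, on which your monotone-convergence/Dini-type argument goes through.  I would make the alternative truncation the primary construction rather than an afterthought.
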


We can now prove the main theorem of this section.
\theoremstyle{theorem}
\newtheorem*{theorem-mindepth}{Theorem \ref{thm:mindepthmain}}
\begin{theorem-mindepth}[Restated]
The minimum depth $M_n$ of a \textsc{sarrt} with attachment $X$
having a density, bounded or not, satisfies
\[
\frac{M_n}{\log n} \inprob \alpha_{\min} \quad\text{as}\quad n\to\infty,
\]
where $\alpha_{\min}$ is defined in equation \eqref{eq:alphamin}.
\end{theorem-mindepth}
\begin{proof}
Let $c \in (\alpha_{\min}, 1/\mu)$ and pick $\eps$ so that
$\eps/\mu < 1 - \Psi(c)$ (recall that $\mu = \ex{-\log X} > 0$ and that we can assume $\mu < + \infty$). In
order to handle the case where $X$ has an unbounded density, we
define (using Lemma \ref{lem:unboundedmin}) an auxiliary random variable $X_{\eps}\geq X$ with an atom
at $+\infty$ and a density on $(0,1)$ bounded by $\kappa =
\kappa(\eps)$ such that for all $z \leq -\mu$ such that
$\Lambda^*(z) < +\infty$, we have
\[
\Lambda^*(z) \leq \Lambda^*_{\eps}(z) \leq \Lambda^*(z) + \eps.
\]
Define $\Psi_{\eps}(c) = c \Lambda^*_{\eps}(-1/c)$ and
$\wt{\alpha}_{\min} = \sup \left\{\, 0 \,\right\} \cup \left\{c:\;
c \in \R_+ \;\text{ and }\; \Psi_{\eps}(c) > 1\right\}$.  By the
choice of $c$ and $\eps$,
\[
\Psi(c) \leq \Psi_{\eps}(c) \leq \Psi(c) + c\eps < \Psi(c) + \eps/\mu < 1
\]
so that $c > \wt{\alpha}_{\min}$.

Consider a sequence of independent random variables $\wt{X}_0,
\dots, \wt{X}_{2n}$ distributed as $X_{\eps}$, constructed as in
Lemma \ref{lem:unboundedmin} so that $X_i \leq \wt{X}_i$ for all
$1\leq i\leq 2n$. We can define the associated ancestor labels
$\wt{L}(x,s)$ and events $\wt{B}_{x,s}$ for any $x \in \{0, \dots,
2n\}$ and $s \geq 1$. Because $X_i \leq \wt{X}_i$ for every $1\leq
i\leq 2n$ we have for all $t \geq 1$ and $\beta \in (0,1)$,
\[
\pr{\bigcup_{x=n+1}^{2n} B_{x,t}(\beta)} \geq \pr{\bigcup_{x=n+1}^{2n} \wt{B}_{x,t}(\beta)}.
\]
To prove that $\pr{\cup_{x=n+1}^{2n} \wt{B}_{x,t}(\beta)}$
approaches $1$ as $n \to \infty$, we proceed in a similar way as
in Theorem \ref{thm:height}. Fix $\delta \in (0,1/2)$ with
$3\delta < \eps$, $\beta = e^{-1/c}$ and $t = \floor{(1-\eps) c
\log n}$. We have
\begin{equation}\label{eq:secondmomentmin}
\pr{\bigcup_{x=n+1}^{2n} \wt{B}_{x,t}} \geq \frac{\left(\sum_{x=n+1}^{2n} \pr{\wt{B}_{x,t}}\right)^2 }
{\sum_{x=n+1}^{2n} \pr{\wt{B}_{x,t}} + \sum_{x \neq y} \pr{\wt{B}_{x,t} \cap \wt{B}_{y,t}}}.
\end{equation}
First, as $c < \wt{\alpha}_{\min}$, we can use Lemma \ref{lem:probb}:
\[
\pr{\wt{B}_{x,t}} \geq \frac{\beta^t}{t} \geq \frac{n^{-1+\eps}}{t}.
\]
Then, using Lemma \ref{lem:jointprobb}, we get
\[
\pr{\wt{B}_{x,t} \cap \wt{B}_{y,t}} \leq \sum_{s=1}^t
\pr{\wt{B}_{x,t}} \pr{\wt{B}_{y,s-1}} \frac{t(t+1) \kappa}{n \beta^{s-1}} + \pr{\wt{B}_{x,t}} \pr{\wt{B}_{y,t}}.
\]
Let $t_0$ be defined as in Lemma \ref{lem:probb}. A calculation
similar to the one in the proof of Theorem \ref{thm:height} gives:
\[
\pr{\wt{B}_{x,t}\cap \wt{B}_{y,t}} \leq \pr{\wt{B}_{x,t}}
\left( \Ord{\frac{t}{n}} +
\frac{t(t+1)\kappa}{n} \cdot \frac{\beta^{(\Psi(c)-\delta-1)t}-1}{\beta^{(\Psi(c)-\delta-1)}-1}
+ \pr{\wt{B}_{y,t}}\right).
\]

We end up with
\[
\pr{\wt{B}_{x,t} \cap \wt{B}_{y,t}} \leq \pr{\wt{B}_{x,t}} \pr{\wt{B}_{y,t}} \left(1 + \Ord{n^{-\eps/4}}\right).
\]
Thus, going back to equation \eqref{eq:secondmomentmin}, we obtain
\[
\pr{\bigcup_{x=n+1}^{2n} B_{x,t}} \geq \pr{\bigcup_{x=n+1}^{2n} \wt{B}_{x,t}} \geq 1 - \Ord{n^{-\eps/4}}.
\]

When the event $B_{x,t}$ holds, $L(x,t) \leq 2n\beta^t \leq 2n
\cdot e^{1/c} n^{-1+\eps} \leq 2e^{1/c} n^{\eps}$, i.e., the
length of the path from $x$ to a node whose label is no larger
than $2e^{1/c} n^{\eps}$ is at most $t$. But using the upper bound
on the height of a \textsc{sarrt} (Section \ref{sec:heightup}), we
know that the depth of a node labeled $m$ is at most $2
\alpha_{\max} \log m$ with high probability (recall that
$\alpha_{\max} < +\infty$). In fact,
\begin{align*}
\mathbf{P}\{M_{2n} > c \log n &+ 2\eps \alpha_{\max} \log n\} \\
    &\leq \pr{M_{2n} > t + 2 \eps \alpha_{\max} \log n} \\
    &\leq \left(1-\pr{\bigcup_{x=n+1}^{2n} B_{x,t}}\right) + \pr{ \max_{1 \leq i \leq 2e^{1/c}n^{\eps}} D_i \geq 2 \eps \alpha_{\max} \log n} \\
    &\leq \Ord{n^{-\eps/3}} + \Ord{n^{1-\Psi(2\alpha_{\max})}}.
\end{align*}
We conclude that for any $\eps > 0$,
\[
\pr{M_{n} \leq (1+\eps)  \alpha_{\min} \log n } \to 1.
\]
Combining this with the upper bound proved in Lemma
\ref{lem:lower}, we get the desired result.
\end{proof}


\section{Applications}\label{sec:applications}

Giving $X$ the uniform $[0,1)$ density provides a new elementary
proof for the height of the \textsc{urrt} that avoids any mention
of branching processes as has been done by \citet{Dev87} or
\citet{Pit94}. Note that Cram\'{e}r's Theorem is not needed in this
case. Instead,  Proposition \ref{prop:cramer} can be directly
proven in this case using properties of the gamma distribution.

Moreover, setting $X = \max (U_1, \dots, U_k)$ and $X = \min(U_1,
\dots, U_k)$, we can compute asymptotics for greedy distances
introduced in \citet{DJ09}. A random $k$-\textsc{dag} (or
\textsc{urrt}) is a directed graph defined as follows. For each
node $i = 1, \dots, n$, a random set of $k$ parents is picked with
replacement uniformly from among the previous nodes $\{0, \dots,
i-1\}$ and the root is still labeled $0$. A node of the graph has
many paths going to the root. One can define many distances. Some
aspects of the longest path distance were studied in \citet{AGM99,TX96} 
and the shortest path distance in \citet{DJ09}.
Moreover, the authors of \cite{DJ09} introduced two other distances defined by
picking the path to the root following the smallest or largest
labels. For instance, if one chooses the parent with the smallest
label, this label is distributed as $\min(\floor{nU_1},
\floor{nU_2}, \dots, \floor{nU_k}) = \floor{n\min(U_1, \dots,
U_k)}$. As a result, these distances can be studied in the
framework introduced in this paper. We define $R_i^-$ and $R_i^+$
to be the distance from node $i$ to the root following these
minimum and maximum label paths. These distances can also be seen
as the depths of node $i$ in a \textsc{urrt} where each node is
given a choice of $k$ independent parents.  The random variable
$R_i^{-}$ corresponds to the choice of the parent with the
smallest label (oldest node) and $R_i^+$ corresponds to the choice
of the newest parent.

Let $X_{\max} = \max (U_1, \dots, U_k)$. Then, by Theorems
\ref{thm:depth}, \ref{thm:heightmain} and \ref{thm:mindepthmain},
\[
\frac{R_n^+}{\log n} \inprob \rho^+ = k \qquad\text{and}\qquad
\frac{R_n^+ - k\log n}{ \sqrt{k\log n} } \inlaw \mathcal{N}(0,1),
\]
and
\[
\lim_{n \to \infty} \frac{\max_{1 \leq i \leq n} R_i^+}{\log n} = \rho^+_{\max} 
\quad \text{almost surely,} \qquad\text{and}\qquad
\frac{\min_{n/2 \leq i \leq n} R_i^+}{\log n} \inprob \rho^+_{\min},
\]
where $\rho^+_{\min}$ and $\rho^+_{\min}$ are defined as the
solutions respectively smaller and larger than $k$ of the equation
$ -c + k - c \log \frac{k}{c} = 1$. Some numerical approximations
generated using a program are shown in Table \ref{tab:values}. It
should be noted that the concentration for $R_n^+$ as well as for
$R_n^-$ presented below were shown in \citet{DJ09} and
\citet{Mah09}, and the corresponding central limit theorems in
\citet{Mah09}.

We give expressions for the relevant functions introduced in the
proof:
\begin{align*}
\ex{-\log X_{\max}} &= \frac{1}{k},\\
\var{-\log X_{\max}} &= \frac{1}{k^2},\\
\Lambda(\lambda) &= - \log \left(1+\frac{\lambda}{k}\right),  & &\text{(for $\lambda>-k$)} \\
\Lambda^*(z) &= -1 - kz - \log(-kz),  & &\text{(for $z < 0$)} \\
\Psi(c) &= -c + k - c \log \frac{k}{c}.
\end{align*}

Similarily, let $X_{\min} = \min (U_1, \dots, U_k)$, then setting
$h_k = \sum_{i=1}^k \frac{1}{i}$ and $h^{(2)}_k = \sum_{i=1}^k
\frac{1}{i^2}$,
\[
\frac{R_n^-}{\log n} \inprob \rho^- = \frac{1}{h_k} \qquad\text{and}\qquad
\frac{R_n^- - \frac{\log n}{h_k}}{ \sqrt{\frac{h_k^{(2)}}{h_k^3} \log n} } \inlaw \mathcal{N}(0,1),
\]
and
\[
\lim_{n\to \infty} \frac{\max_{1 \leq i \leq n} R_i^-}{\log n} = \rho^-_{\max}
\quad \text{almost surely,}
\qquad\text{and}\qquad
\frac{\min_{n/2 \leq i \leq n} R_i^-}{\log n} \inprob \rho^-_{\min},
\]
where  $\rho^-_{\min}$ and $\rho^-_{\min}$ are defined as the
solutions respectively smaller and larger than $1/h_k$ of the
equation $\Psi(c) = 1$. See Table \ref{tab:values} for numerical
approximations of these constants for different values of $k$.

An expression for $\Psi$ and other relevant functions are given
for $X_{\min}$:
\begin{align*}
\ex {-\log X_{\min}} &= h_k,\\
\var {-\log X_{\min}} &= h^{(2)}_k,\\
\Lambda(\lambda) &= - \sum_{i=1}^k \log \left(1+\frac{\lambda}{i}\right),
    & &\text{(for $\lambda>-k$)} \\
\Lambda^*(z) &=  \lambda^*_k(z) z + \sum_{i=1}^k \log\left(1 + \frac{\lambda^*_k(z)}{i}\right),
    & &\text{(for $z < 0$)} \\
\Psi(c) &= - \lambda^*_k(-1/c) + \sum_{i=1}^k \log\left(1+\frac{\lambda^*_k(-1/c)}{i}\right),
\end{align*}
where $\lambda_k^*(z)$ is the solution of $z +\sum_{i=1}^{k} \frac{1}{1+\lambda^*_k(z)/i} = 0$.

\begin{table}[h]
\caption{Approximate numerical values for some constants}
\label{tab:values}
\begin{tabular}{ | c | c c c | c c c |}
  \hline
  $k$ & $\rho_{\min}^+$ & $\rho^+$ & $\rho_{\max}^+$
         & $\rho_{\min}^-$ & $\rho^-$ & $\rho_{\max}^-$ \\
  \hline
  1 & 0 & 1 & $e$ & 0 & 1 & $e$ \\
  2 & 0.3734 & 2 & 4.3111 & 0 & 0.6667 & 1.6738 \\
  3 & 0.9137 & 3 & 5.7640 & 0 & 0.5455 & 1.3025 \\
  4 & 1.5296 & 4 & 7.1451 & 0 & 0.4800 & 1.1060 \\
  5 & 2.1925 & 5 & 8.4805 & 0 & 0.4380 & 0.9818 \\
  \hline
  \end{tabular}
\end{table}

\begin{remark}
This of course can be repeated for $k$-\textsc{dag}s where the
parents of node $n$ are independent and distributed as
$\floor{nX}$ where $X \in [0,1)$ (\textsc{sarrd}) and $\law(X)$
has any density.
\end{remark}


\section{Concluding remarks}

To compute the height of the tree, our proof uses the existence of
a density for $\law(X)$ in order to bound the collision
probability. The existence of a density is only used to find a
lower bound on the height. The upper bound given here (Lemma
\ref{lem:upper}) works for any distribution. It is natural to ask
whether this upper bound is tight for a larger family of
distributions, for example when $\law(X)$ has atoms. Atoms at $0$
are handled by our proof. Note that for a deterministic $X =
\theta \in (0,1)$, the height of the tree, which is simply the
depth of node $n$, is $(1+o(1)) \frac{\log n}{\log 1/\theta}$. For
example, if $\theta = \frac{1}{m}$ for an integer $m \geq 2$, the
tree is a complete $m$-ary tree.

One can construct a random $k$-\textsc{dag} or \textsc{sarrd} in
the same way. Node $n$ chooses $k$ parents $\floor{nX^{(1)}},
\floor{nX^{(2)}}, \dots, \floor{nX^{(k)}}$ where $X^{(1)}, \dots,
X^{(k)}$ are independent copies of a random variable $X \in
[0,1)$. The ``greedy'' distance measures can be computed simply by
considering the \textsc{sarrt} with attachment random variable
$X_{\min} = \min(X^{(1)}, \dots, X^{(k)})$ and $X_{\max} =
\max(X^{(1)}, \dots, X^{(k)})$. One could study the shortest and
longest path distances in a \textsc{sarrd}, which has been done
for the uniform case in \citet{AGM99,DJ09,DKM07},
and \citet{TX96}.

Another point mentioned in \citet{DJ09} is the relation between the
\textsc{sarrt} model and random binary search trees
(\textsc{rbst}). A \textsc{rbst} can be constructed incrementally
by choosing one of the $n+1$ external node at random and replacing
it by the node that arrives at time $n$. The (random) arrival time
of the parent of $n$ is roughly distributed as $\max(\floor{U_1n},
\floor{U_2n})$. This suggests that the depth of nodes in a
\textsc{rbst} and in a \textsc{sarrt} with attachment $X =
\max(U_1, U_2)$ are related. Observe that the height of these two
different types of random trees are the same up to lower order
terms: $\frac{H_n}{\log n} \to \alpha$ where $\alpha \approx
4.3111$ \cite{Dev86}. Considering a best-of-two-choices
\textsc{rbst} in which each new node $n$ has two choices of keys,
and chooses the one for which the parent arrived last. It would be
interesting if the first order of the asymptotic height is the
same for a best-of-two-choices \textsc{rbst} and for an
\textsc{sarrt} with $X = \min(\max(U_1,U_2), \max(U_3, U_4))
\eqlaw \sqrt{1 - \sqrt{U}}$ whose limit $\frac{H_n}{\log n}
\inprob c$ where $c \approx 2.364$. If one picks the parent
closest to the root, then the analysis seems to be even more
challenging.

\section*{Acknowledgments}
The authors would like to thank the referees for their valuable comments.


\appendix
\section{Some pictures of \textsc{sarrt}s}
We include some pictures of \textsc{sarrt} for attachment random
variable of the form $U^{\beta}$ for different values of $\beta$
where $U$ is uniform in $[0,1)$. We color the nodes from light
(green) to dark (red) as a linear function of their labels.

Note that for small values of $\beta$, the attachment distribution
concentrates more around $1$ and most of the nodes link to nodes
of labels close to the bottom part of the tree. As $\beta$ becomes
larger, the distribution is more concentrated near $0$. The tree
has a smaller height, and the root's degree increases.

\begin{figure}[h]
\centering
\subfloat[$\beta=1/2$]{\includegraphics[width=7.5cm]{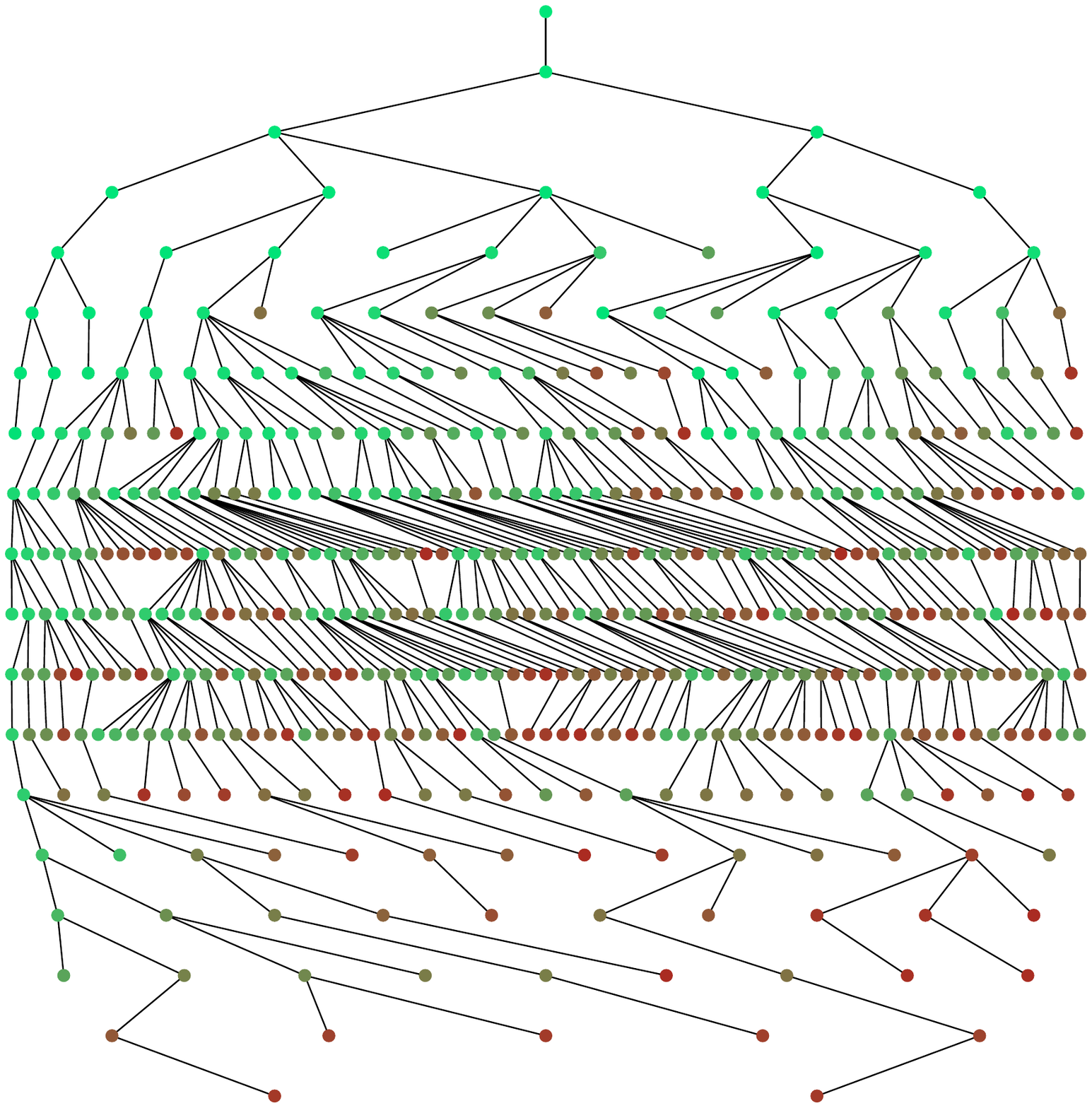}}\quad
\subfloat[$\beta=1$]{\includegraphics[width=7.5cm]{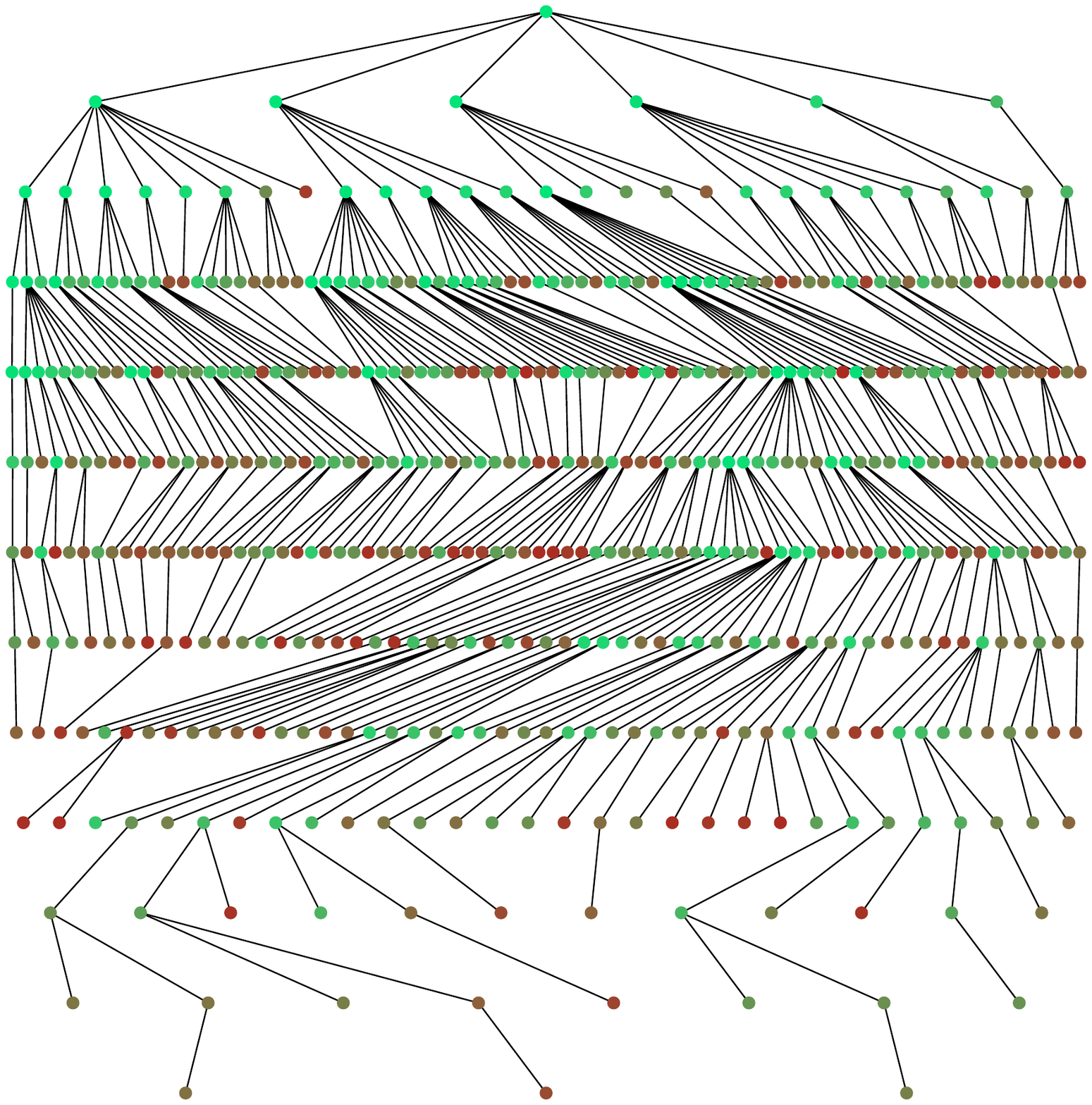}}\\
\subfloat[$\beta=2$]{\includegraphics[width=7.5cm]{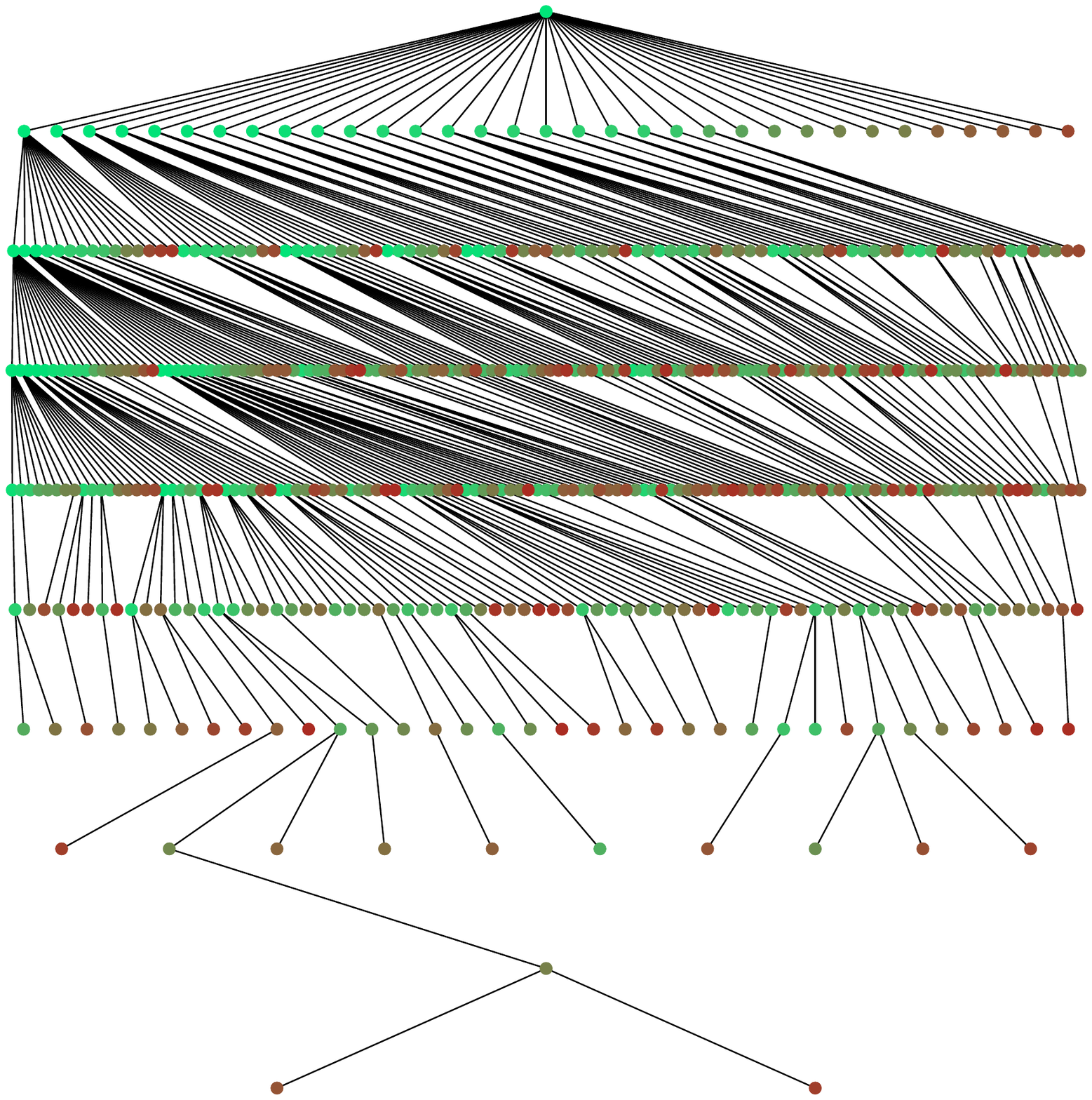}}\quad
\subfloat[$\beta=3$]{\includegraphics[width=7.5cm]{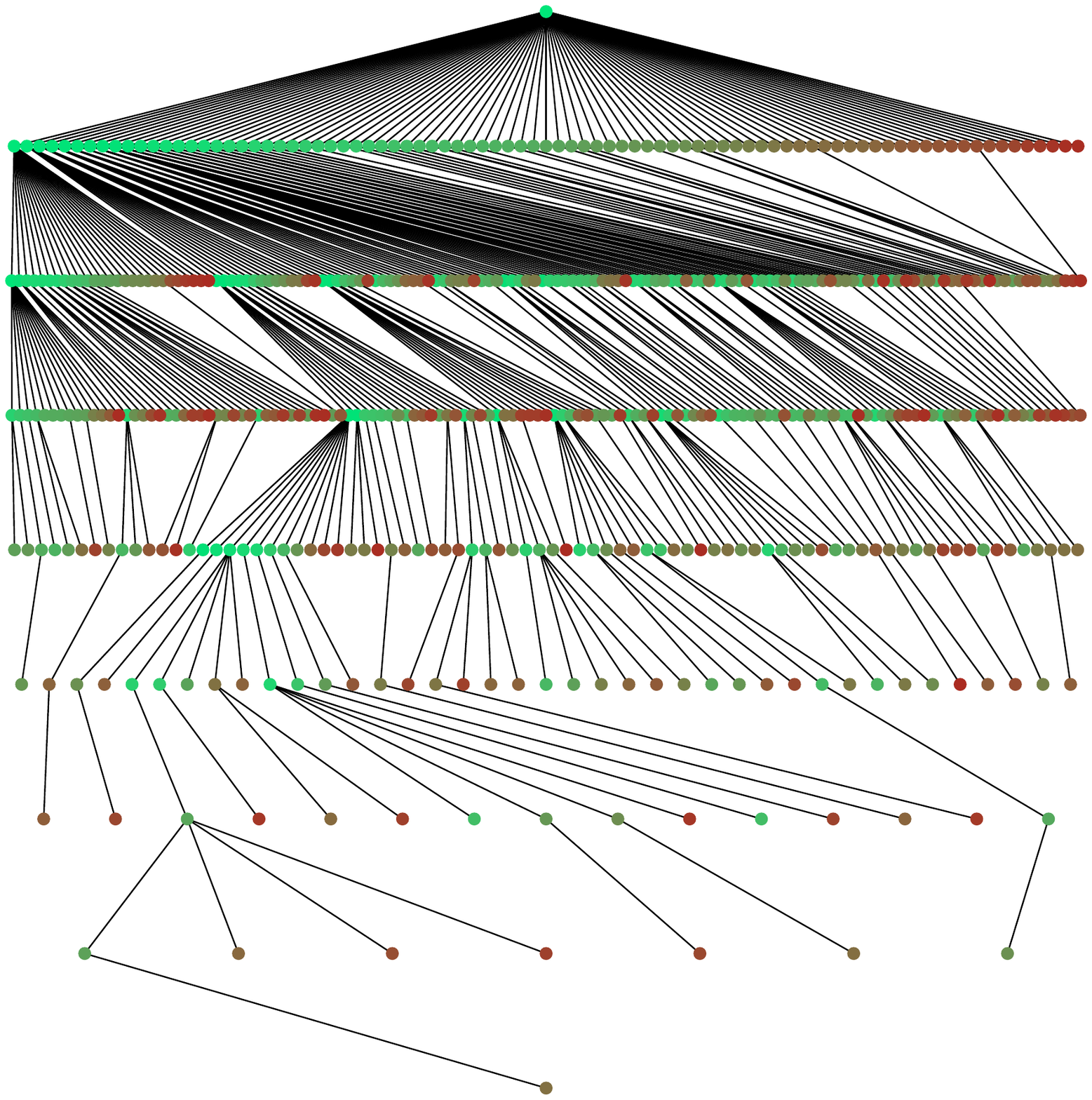}}
\caption{\textsc{sarrt} with distribution $U^{\beta}$ and $n= 500$.}
\end{figure}


\section{Properties of $\Lambda^*$ and $\Psi$}\label{sec:applambda}

We prove some properties of Cram\'er's function $\Lambda^*\,$ as
well as the function $\Psi$ both defined in Section
\ref{sec:heightup}. See \cite{DZ98} for more details on Cram\'er's
theorem. Recall that $\Lambda^*$ is defined as:
\[
\Lambda^*(z) = \sup_{\lambda\in\R} \big\{\lambda z - \Lambda(\lambda) \big\},
\qquad\text{where}\quad
\Lambda(\lambda) = \log \ex{e^{\lambda Y}}.
\]
Note that in our case $Y = \log X$ is a negative random variable,
so $\Lambda(\lambda) < +\infty$ for $\lambda \geq 0$.

\begin{proposition}\label{prop:lambdastar}
Let $Y$ be a negative random variable with $\ex{Y} = -\mu \in [-\infty, 0)$. Then:
\begin{enumerate}[\textup{(}i\textup{)}]
\item $\leg{z} \in [0, +\infty]$ for all $z \in \R$.
\item If $\mu < +\infty$, then $-\mu \in D_{\Lambda^*}$ and $\Lambda^*(-\mu) = 0$.
\item $\leg{z} = \sup_{\lambda \geq 0} \big\{\lambda z - \Lambda(\lambda) \big\} \quad\text{if } z \geq -\mu$.
\item If $\Lambda(\lambda) < +\infty$ for some $\lambda < 0$, $\leg{z} = \sup_{\lambda \leq 0} \big\{\lambda z - \Lambda(\lambda) \big\} \quad\text{for } z \leq -\mu$.
\item $\Lambda^*$ is decreasing on $(-\infty, -\mu)$ and increasing on $(-\mu, +\infty)$.
\item $\leg{z} > 0$ for $z > -\mu$.
\item $\Lambda^*$ is convex and thus continuous on the interior of $\{ z: \leg{z} < +\infty \}$.
\end{enumerate}
\end{proposition}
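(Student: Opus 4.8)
The plan is to derive all seven items from two elementary properties of $\Lambda(\lambda)=\log\ex{e^{\lambda Y}}$. First, $\Lambda$ is convex on $\R$ (H\"older's inequality applied to $e^{\lambda Y}$), with $\Lambda(0)=0$ and, since $Y<0$ almost surely, $\Lambda(\lambda)\le 0$ for every $\lambda\ge 0$. Second, $\Lambda$ has a right derivative at $0$ equal to $\ex{Y}=-\mu$: for $\lambda>0$ one may differentiate under the expectation because $|Y|e^{\lambda Y}$ is bounded for $Y<0$, which gives $\Lambda'(\lambda)=\ex{Ye^{\lambda Y}}/\ex{e^{\lambda Y}}$, and letting $\lambda\downarrow 0$ with monotone convergence ($Ye^{\lambda Y}\downarrow Y$) in the numerator and dominated convergence in the denominator yields $\Lambda'(0^+)=\ex{Y}$, with the value $-\infty$ allowed. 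I would also record two consequences of Jensen's inequality: $\Lambda(\lambda)\ge\lambda\ex{Y}=-\mu\lambda$ when $\mu<+\infty$, and, when $\mu=+\infty$ (equivalently $\ex{|Y|}=+\infty$), $\Lambda(\lambda)=+\infty$ for every $\lambda<0$.

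From here the bookkeeping items are short. Taking $\lambda=0$ in the supremum gives $\leg{z}\ge 0$, which is (i). For (ii), Jensen's bound yields $\lambda(-\mu)-\Lambda(\lambda)\le 0$ for all $\lambda$, so $\leg{-\mu}\le 0$, hence $\leg{-\mu}=0$ and $-\mu\in D_{\Lambda^*}$. For (iii), if $z\ge-\mu$ and $\lambda<0$ then $\lambda z-\Lambda(\lambda)\le\lambda z-(-\mu\lambda)=\lambda(z+\mu)\le 0$ (or the term is $-\infty$ when $\mu=+\infty$), so the negative $\lambda$ never beat the value $0$ attained at $\lambda=0$, and the supremum may be restricted to $\lambda\ge 0$; (iv) is the mirror image, restricting to $\lambda\le 0$ when $z\le-\mu$ under the stated hypothesis.

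Item (vi) is the heart of the matter and the only place the derivative identity is really needed: for $z>-\mu$ we have $\Lambda'(0^+)=-\mu<z$, so $\Lambda(\lambda)<\lambda z$ for all sufficiently small $\lambda>0$, and then $\leg{z}\ge\lambda z-\Lambda(\lambda)>0$. With (i), (ii), (vi) in hand, (v) follows from convex analysis: $\Lambda^*$ is a supremum of affine functions, hence convex, and by (i)--(ii) it attains its minimum value $0$ at $-\mu$, so it is nonincreasing to the left of $-\mu$ and nondecreasing to the right, and strict monotonicity on the interior of the effective domain follows by writing an intermediate point as a proper convex combination of $-\mu$ and a point farther from it and invoking (vi). Finally, for (vii), convexity of $\Lambda^*$ is again the sup-of-affine-maps argument, and a convex function that is finite on an open interval is automatically continuous there (indeed locally Lipschitz). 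I expect the only genuinely delicate step to be justifying $\Lambda'(0^+)=\ex{Y}$ when $\ex{Y}=-\infty$; the remaining care is purely in tracking the degenerate cases $\mu=+\infty$ (where the intervals in (iii)--(v) partly collapse) and $Y$ almost surely constant.
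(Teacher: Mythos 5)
Your proposal is correct and follows essentially the same route as the paper: $\lambda=0$ for (i), Jensen's inequality for (ii)--(iv), the behaviour of $\Lambda$ near $\lambda=0^+$ (its right derivative $\ex{Y}$) for (vi), and the sup-of-affine-functions argument for convexity and continuity in (vii). The only real deviation is item (v), which you deduce from convexity of $\Lambda^*$ together with its minimum at $-\mu$ (note that when $\mu=+\infty$ there is no finite anchor point, so there you should instead fall back on the representation in (iii), i.e.\ $\Lambda^*$ is a supremum over $\lambda\geq 0$ of nondecreasing functions of $z$ --- a fact you have already recorded), whereas the paper reads the monotonicity directly off the one-sided supremum representations of (iii)--(iv); both arguments are equally valid.
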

\begin{proof}\mbox{}
\begin{enumerate}[(i)]
\item $\leg{z}$ is non-negative for $z \in \R$:
\[
\leg{z} \geq 0 \cdot z - \Lambda(0) = 0.
\]

\item By concavity of the logarithm function, we have
\begin{equation}\label{eq:lambda_ineq}
\Lambda(\lambda) = \log \ex{e^{\lambda Y}} \geq \ex{ \log e^{\lambda Y} } = \lambda \ex{Y} = -\lambda \mu,
\end{equation}
using Jensen's inequality. As a result
\[
\leg{\mu} = \sup_{\lambda} \big\{\lambda \mu - \Lambda(\lambda) \big\} \leq 0.
\]
We conclude using the non-negativity of $\Lambda^*$.

\item If $\Lambda(\lambda) < +\infty$ for some $\lambda < 0$, then
$\mu < +\infty$. In fact, $\Lambda(\lambda) < +\infty$ implies
\[
\ex{Y} \geq \ex{ e^{-\lambda Y}}/\lambda > -\infty
\]
by using the inequality $\lambda z \leq e^{\lambda z}$ for all
reals $\lambda$ and $z$. It follows that if $\mu = +\infty$,
$\Lambda(\lambda) = +\infty$ for all $\lambda <0$. In this case,
the property trivially holds. For $\mu$ finite, $z \geq -\mu$ and
$\lambda \leq 0$
\[
\lambda z - \Lambda(\lambda) \leq -\lambda\mu - \Lambda(\lambda)\leq \leg{-\mu} = 0.
\]

\item As previously shown, we have $\mu < +\infty$ in this case.
For $z \leq -\mu$ and $\lambda \geq 0$,
\[
\lambda z - \Lambda(\lambda) \leq -\lambda \mu - \Lambda(\lambda) \leq \leg{-\mu} = 0.
\]
\item For $z \geq -\mu$, $\leg{z} = \sup_{\lambda \geq 0} \big\{
\lambda z - \Lambda(\lambda) \big\}$. This implies that
$\Lambda^*$ is increasing on $[-\mu, +\infty)$ as $z \mapsto
\lambda z - \Lambda(\lambda)$ is an increasing function. Now if
$\Lambda(\lambda) < +\infty$ for some $\lambda < 0$, then $\leg{z}
= \sup_{\lambda \leq 0} \big\{\lambda z - \Lambda(\lambda) \big\}$
for $z \leq -\mu$, and similarly we get $\Lambda^*$ decreasing on
$(-\infty, -\mu)$. Otherwise if $\Lambda(\lambda) = +\infty$ for
all $\lambda < 0$, then $\Lambda^*(z) = 0$ for all $z \leq -\mu$.

\item For $z > -\mu$, consider the function $f : \lambda \mapsto
\lambda z - \Lambda(\lambda)$. As $Y$ is a negative random
variable, this function is defined for all $\lambda \geq 0$.
Moreover it is differentiable and $f'(\lambda) = x -
\ex{Ye^{\lambda Y}}/\ex{e^{\lambda Y}}$. Observe that $f'(0) = z -
\mu > 0$. As a result $f$ is positive on a neighborhood of $0$. As
a result $\leg{z} = \sup_{\lambda} \{ f(\lambda) \} > 0$ on this
interval. Now as $\Lambda^*$ is increasing, we get the desired
result.

\item For $\theta \in [0,1]$,
\begin{align*}
\theta \leg{z_1} + (1-\theta) \leg{z_2}
    &= \sup_{\lambda\in\R} \big\{ \theta \lambda z_1 - \theta \Lambda(\lambda) \big\}
    + \sup_{\lambda\in\R} \big\{ (1-\theta) \lambda z_2 - (1-\theta) \Lambda(\lambda) \big\} \\
    & \geq \sup_{\lambda\in\R} \big\{  \lambda (\theta z_1 + (1-\theta) z_2) - \Lambda(\lambda) \big\} \\
    &= \leg{\theta z_1 + (1-\theta) z_2}. \qedhere
\end{align*}
\end{enumerate}
\end{proof}

In the next proposition, another property of $\Lambda^*$ is
introduced to prove that except in the case where $\law(Y)$ is a
single mass, there exists $z > -\mu$ for which $\Lambda^*$ is
finite.

\begin{proposition}\label{prop:lambdastar_finite}
Let $Y$ be a negative random variable with $\ex{Y} = -\mu \in
[-\infty, 0)$. If $\law(Y)$ is not a single mass, then there
exists $z > -\mu$ such that $\Lambda^*(z) < +\infty$. Moreover if
$\Lambda(\lambda) < +\infty$ for some $\lambda < 0$, then there
exists also $z < -\mu$ such that $\Lambda^*(z) < +\infty$.
\end{proposition}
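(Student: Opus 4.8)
The plan is to use a one-sided Chernoff-type lower bound on the moment generating function of $Y$, combined with the elementary fact that a random variable lying entirely on one side of its mean must be almost surely constant.

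First I would settle the existence of $z > -\mu$ with $\leg{z} < +\infty$. Since $\law(Y)$ is not a single mass, $Y$ is not almost surely equal to $-\mu$. If $\mu < +\infty$, this forces $\pr{Y > -\mu} > 0$: otherwise $Y \leq -\mu = \ex{Y}$ almost surely, and then $\ex{-\mu - Y} = 0$ together with $-\mu - Y \geq 0$ would give $Y = -\mu$ almost surely. If $\mu = +\infty$, then $-\mu = -\infty$ and $\pr{Y > -\infty} > 0$, since $Y \equiv -\infty$ would mean $X \equiv 0$, again a single mass. In all cases there is a real $b > -\mu$ with $q := \pr{Y \geq b} > 0$. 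Then for every $\lambda \geq 0$ one has $\ex{e^{\lambda Y}} \geq e^{\lambda b}\pr{Y \geq b} = q\,e^{\lambda b}$, hence $\Lambda(\lambda) \geq \lambda b + \log q$ and $\lambda b - \Lambda(\lambda) \leq -\log q$. By part (iii) of Proposition \ref{prop:lambdastar}, $\leg{b} = \sup_{\lambda \geq 0}\{\lambda b - \Lambda(\lambda)\} \leq -\log q < +\infty$, so $z = b$ does the job.

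For the second assertion I would add the hypothesis that $\Lambda(\lambda_0) < +\infty$ for some $\lambda_0 < 0$. As already observed in the proof of Proposition \ref{prop:lambdastar}(iii), this forces $\mu < +\infty$, and moreover $Y > -\infty$ almost surely (otherwise $\Lambda(\lambda) = +\infty$ for every $\lambda < 0$). Arguing on the other tail, non-degeneracy of $\law(Y)$ together with $\ex{Y} = -\mu$ gives $\pr{Y < -\mu} > 0$, so there is a real $a < -\mu$ with $q' := \pr{Y \leq a} > 0$. For every $\lambda \leq 0$, on the event $\{Y \leq a\}$ we have $\lambda Y \geq \lambda a$, whence $\ex{e^{\lambda Y}} \geq q'\,e^{\lambda a}$ and $\Lambda(\lambda) \geq \lambda a + \log q'$ (also trivially when $\Lambda(\lambda) = +\infty$), so that $\lambda a - \Lambda(\lambda) \leq -\log q'$. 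Since $\Lambda$ is finite at a negative point, part (iv) of Proposition \ref{prop:lambdastar} applies and yields $\leg{a} = \sup_{\lambda \leq 0}\{\lambda a - \Lambda(\lambda)\} \leq -\log q' < +\infty$, so $z = a$ works.

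I do not expect a serious obstacle here. The points that need care are purely bookkeeping: distinguishing $\mu < +\infty$ from $\mu = +\infty$, ruling out an atom of $Y$ at $-\infty$ in the second part, and noting that the Chernoff-type bounds $\Lambda(\lambda) \geq \lambda b + \log\pr{Y \geq b}$ and $\Lambda(\lambda) \geq \lambda a + \log\pr{Y \leq a}$ remain valid (trivially) when $\Lambda(\lambda) = +\infty$, which is what lets me restrict the suprema to one half-line via Proposition \ref{prop:lambdastar}(iii)--(iv).
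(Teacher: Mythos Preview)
Your argument is correct and complete. The Chernoff-type lower bound $\ex{e^{\lambda Y}} \geq e^{\lambda b}\pr{Y \geq b}$ immediately caps $\lambda b - \Lambda(\lambda)$ uniformly over $\lambda \geq 0$, and the appeal to Proposition~\ref{prop:lambdastar}(iii)--(iv) to restrict the supremum to the correct half-line is exactly what is needed.

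The paper proceeds differently. Writing $X=e^{Y}$, it first establishes that $\lambda \mapsto \Lambda(\lambda)/\lambda = \log\ex{X^{\lambda}}^{1/\lambda}$ is strictly increasing on $(0,+\infty)$ (a power-mean/Jensen inequality), and then takes the explicit value $z_1 = \Lambda(1)$. The monotonicity of $\Lambda(\lambda)/\lambda$ forces $\lambda z_1 - \Lambda(\lambda) \leq 0$ for all $\lambda \geq 1$, so $\Lambda^*(z_1)$ reduces to a supremum over the compact interval $[0,1]$ and is finite; and strict monotonicity at $\lambda=1/2$ together with $\Lambda(\lambda)/\lambda \geq -\mu$ yields $z_1 > -\mu$. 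The second part is symmetric, with $z_\delta = \Lambda(\delta)/\delta$ for a negative $\delta$ at which $\Lambda$ is finite. Your route is shorter and more probabilistic; the paper's route is more analytic and delivers as a by-product an explicit witness ($z_1 = \log\ex{X}$) and the structural fact that $\Lambda(\lambda)/\lambda$ is monotone, though neither of these is used elsewhere in the paper.
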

\begin{proof}
We start by proving that $\Lambda(\lambda)/\lambda$ is a strictly
increasing function for $\lambda > 0$. Writing $X = e^{Y}$, we
have $\log \ex{ e^{\lambda Y} } = \log \ex{ X^{\lambda} }$. Let $0
< \lambda < \lambda'$, and define $g(x) = x^{\lambda'/\lambda}$
for $x \geq 0$. Then using Jensen's inequality for the convex
function $g$:
\[
\ex{ X^{\lambda} }^{1/\lambda} = g\left(\ex{ X^{\lambda} }\right)^{1/\lambda'} < \ex{ g(X^{\lambda}) }^{1/\lambda'} = \ex{ X^{\lambda'} }^{1/\lambda'}.
\]
as $X$ is not constant. By taking the logarithm
\[
\frac{\Lambda(\lambda)}{\lambda} < \frac{\Lambda(\lambda')}{\lambda'}.
\]
Let $z_1 = \Lambda(1)$. By the fact that
$\Lambda(\lambda)/\lambda$ is increasing, $\lambda (z_1 -
\Lambda(\lambda)/\lambda) \leq 0$ for $\lambda \geq 1$. Thus,
\[
\leg{z_1} = \sup_{\lambda \geq 0} \big\{ \lambda (z_1 - \Lambda(\lambda)/\lambda) \big\} =
\sup_{0 \leq \lambda \leq 1} \big\{ \lambda z_1 - \Lambda(\lambda) \big\} < +\infty.
\]
Now, using equation \eqref{eq:lambda_ineq}, $\Lambda(0.5)/0.5 \geq
-\mu$. But $z_1 = \Lambda(1)/1 > \Lambda(0.5)/0.5 \geq -\mu$.
Finally, $z_1 > -\mu$ and $\Lambda^*(z_1) < +\infty$.

As for the case $z < -\mu$, we start by observing that
$\Lambda(\lambda)/\lambda$ is a strictly decreasing function of
$\lambda$ for $\lambda < 0$ using the same argument as above. Then
if $\Lambda(\delta) < +\infty$ for some $\delta < 0$, let
$z_{\delta} = \Lambda(\delta)/\delta$. We have $z_{\delta} >
\Lambda(0.5\delta)/0.5 \delta \geq -\mu$. Moreover, $\lambda
(z_{\delta} - \Lambda(\lambda)/\lambda) \leq 0$ for $\lambda \leq
\delta$. Thus,
\[
\leg{z_{\delta}} = \sup_{\delta \leq \lambda \leq 0} \big\{ \lambda (z_{\delta} - \Lambda(\lambda)/\lambda) \big\} < +\infty. \qedhere
\]
\end{proof}

Using these properties we prove the results needed for the
function $\Psi$.

\begin{proposition}\label{prop:psi}
Let $Y$ be a negative random variable with $\ex{Y} = -\mu \in
[-\infty, 0)$. Define the function $\Psi$ by $\Psi(c) = c
\leg{-1/c}$ for $c > 0$. Let $\mathcal{D}_{\Psi} = \{c > 0:
\Psi(c) < +\infty \}$. Then,
\begin{enumerate}[\textup{(}i\textup{)}]
\item The function $\Psi$ is continuous on the interior of
$\mathcal{D}_{\Psi}$. It is decreasing on $(0, 1/\mu)$ and
strictly increasing on $(1/\mu, +\infty) \cap \mathcal{D}_{\Psi}$.

\item The set $\{ c > 1/\mu : \Psi(c) > 1\}$ is non-empty. Define
\[
\alpha_{\max} = \inf \left\{ c > \frac{1}{\mu} : \Psi(c) > 1 \right\}.
\]
Then $\alpha_{\max} < +\infty$, and if $\law(X)$ is not a single
mass, $\alpha_{\max} > 1/\mu$. Moreover, for $c \in (1/\mu,
\alpha_{\max})$, then $\Psi(c) < 1$.

\item If $\mu < +\infty$, define
\[
\alpha_{\min} = \sup \bigg\{\,0\,\bigg\} \cup \left\{ c < \frac{1}{\mu} : \Psi(c) > 1 \right\}.
\]
Then if $\law(X)$ is not a single mass, $\alpha_{\min} < 1/\mu$.
Moreover, for $c \in (\alpha_{\min},1/\mu)$, we have $\Psi(c) <
1$.
\end{enumerate}
\end{proposition}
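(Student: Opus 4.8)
Throughout, the basic identity is that, for $c>0$,
\[
\Psi(c)=c\,\leg{-1/c}=\sup_{\lambda\in\R}\bigl\{\,-\lambda-c\,\Lambda(\lambda)\,\bigr\},
\]
obtained by pulling the positive factor $c$ inside the supremum defining $\leg{-1/c}$. Two things follow at once: $\Psi$ is a supremum of affine functions of $c$, hence convex on $(0,\infty)$, so $\mathcal{D}_{\Psi}$ is an interval and $\Psi$ is continuous on its interior, which is the continuity claim in (i); and since $-1/c\geq-\mu$ exactly when $c\geq 1/\mu$, Proposition \ref{prop:lambdastar}(iii)--(iv) lets me restrict the supremum to $\lambda\geq 0$ when $c\geq 1/\mu$ and to $\lambda\leq 0$ when $c\leq 1/\mu$ (the latter provided $\Lambda(\lambda)<\infty$ for some $\lambda<0$; otherwise $\Lambda^*$, hence $\Psi$, vanishes on $(0,1/\mu]$ by Proposition \ref{prop:lambdastar}(v), and every left-hand assertion is trivial). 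Because $X=e^{Y}\in[0,1)$ forces $\Lambda(\lambda)\leq 0$ for $\lambda\geq 0$ and $\Lambda(\lambda)\geq 0$ for $\lambda\leq 0$, each affine map $c\mapsto-\lambda-c\Lambda(\lambda)$ entering the relevant supremum is non-decreasing on $[1/\mu,\infty)$ and non-increasing on $(0,1/\mu]$, so $\Psi$ is non-decreasing on $(1/\mu,\infty)\cap\mathcal{D}_{\Psi}$ and non-increasing on $(0,1/\mu)$. For the strict increase on $(1/\mu,\infty)\cap\mathcal{D}_{\Psi}$: for $c>1/\mu$ the $\lambda$-derivative of $-\lambda-c\Lambda(\lambda)$ at $\lambda=0$ equals $c\mu-1>0$, so by concavity of this map its near-maximizers over $\lambda\geq 0$ are bounded away from $0$, say $\lambda\geq\eta>0$; there $\ex{X^{\lambda}}\leq\ex{X^{\eta}}<1$, so $-\Lambda(\lambda)\geq-\log\ex{X^{\eta}}=:\kappa>0$, and for $1/\mu<c_1<c_2$ in $\mathcal{D}_{\Psi}$ a near-maximizer $\lambda_1$ for $c_1$ gives $\Psi(c_2)\geq-\lambda_1-c_2\Lambda(\lambda_1)=\bigl(-\lambda_1-c_1\Lambda(\lambda_1)\bigr)+(c_2-c_1)(-\Lambda(\lambda_1))\geq\Psi(c_1)-\eps+(c_2-c_1)\kappa$, whence $\Psi(c_2)>\Psi(c_1)$ after letting $\eps\to 0$. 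The mirror computation, using that $\Lambda$ is strictly decreasing on $(-\infty,0]$, shows $\Psi$ is strictly decreasing on $\{c\in(0,1/\mu):\Psi(c)>0\}$.

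For (ii), the crude bound $\Psi(c)\geq-1-c\Lambda(1)=-1+c\,|\log\ex{X}|$ (take $\lambda=1$ and note $\Lambda(1)=\log\ex{X}<0$) exceeds $1$ for all large $c$, which also satisfy $c>1/\mu$; hence $\{c>1/\mu:\Psi(c)>1\}$ is non-empty and $\alpha_{\max}$ is well defined and finite. To get $\alpha_{\max}>1/\mu$ when $\law(X)$ is not a single mass, I show $\Psi(c)<1$ on a right-neighborhood of $1/\mu$. If $\mu<\infty$ then $\Psi(1/\mu)=(1/\mu)\leg{-\mu}=0$ by Proposition \ref{prop:lambdastar}(ii), while Proposition \ref{prop:lambdastar_finite} supplies a point $c_1>1/\mu$ with $\Psi(c_1)<\infty$; convexity of $\Psi$ on $[1/\mu,c_1]$ then gives $\Psi(c)\leq\frac{c-1/\mu}{c_1-1/\mu}\,\Psi(c_1)\to 0$ as $c\to(1/\mu)^{+}$. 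If $\mu=+\infty$ then $1/\mu=0$, and choosing $\theta\in(0,1)$ with $\pr{X\geq\theta}>0$ the inequality $-\Lambda(\lambda)\leq-\log\pr{X\geq\theta}+\lambda\log(1/\theta)$ yields $-\lambda-c\Lambda(\lambda)\leq-c\log\pr{X\geq\theta}$ for all $\lambda\geq 0$ once $c<1/\log(1/\theta)$, so $\Psi(c)\leq-c\log\pr{X\geq\theta}\to 0$ as $c\to 0^{+}$. In either case $\Psi(c)<1$ for $c$ just above $1/\mu$, so $\alpha_{\max}>1/\mu$. Finally, if $c\in(1/\mu,\alpha_{\max})$ then $\Psi(c)\leq 1$ by the definition of $\alpha_{\max}$ as an infimum, so $c\in\mathcal{D}_{\Psi}$, and choosing $c'\in(c,\alpha_{\max})$ and invoking the strict increase proved above gives $\Psi(c)<\Psi(c')\leq 1$.

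Part (iii) is the mirror image on $(0,1/\mu)$, where $\mu<\infty$ is assumed. If $\Lambda(\lambda)=+\infty$ for every $\lambda<0$ then $\Psi\equiv 0$ on $(0,1/\mu)$ and $\alpha_{\min}=0<1/\mu$ trivially; otherwise Proposition \ref{prop:lambdastar_finite} supplies a point $c_0<1/\mu$ with $\Psi(c_0)<\infty$, and convexity together with $\Psi(1/\mu)=0$ gives $\Psi(c)\leq\frac{1/\mu-c}{1/\mu-c_0}\,\Psi(c_0)\to 0$ as $c\to(1/\mu)^{-}$, so $\Psi(c)<1$ for $c$ just below $1/\mu$ and hence $\alpha_{\min}<1/\mu$ when $\law(X)$ is not a single mass. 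For $c\in(\alpha_{\min},1/\mu)$ we get $\Psi(c)<1$ exactly as in (ii): either $\Psi(c)=0<1$, or $\Psi(c)>0$, in which case, taking $c'\in(\alpha_{\min},c)$ with $\Psi(c')\leq 1$ and using the strict decrease on $\{\Psi>0\}$, we obtain $\Psi(c)<\Psi(c')\leq 1$.

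The main obstacle is the bookkeeping near $c=1/\mu$ on both sides: one has to exclude the scenario in which $\Psi$ remains $\leq 1$ and then jumps directly to $+\infty$ at $1/\mu$, which would break the continuity step. This is precisely what Proposition \ref{prop:lambdastar_finite} takes care of, since it guarantees that $\mathcal{D}_{\Psi}$ contains points on each side of $1/\mu$ (apart from the degenerate subcase in which $\Psi$ vanishes identically to the left of $1/\mu$, which is harmless), and combined with convexity of $\Psi$ this is where the hypothesis that $\law(X)$ is not a single mass enters.
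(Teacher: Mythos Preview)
Your proof is correct, and in several places cleaner than the paper's. The key organizing device you use --- writing $\Psi(c)=\sup_{\lambda}\{-\lambda-c\Lambda(\lambda)\}$ and reading off convexity of $\Psi$ as a supremum of affine functions in $c$ --- is not made explicit in the paper, which instead deduces continuity and monotonicity by translating properties of $\Lambda^*$. Your strict-increase argument via near-maximizers bounded away from $\lambda=0$ is essentially equivalent to the paper's observation that $\Lambda^*$ is increasing and strictly positive to the right of $-\mu$, but you also prove the mirror statement (strict decrease on the set $\{c<1/\mu:\Psi(c)>0\}$), which the paper does not isolate in part~(i). This pays off in part~(iii): you dispatch the inequality $\Psi(c)<1$ on $(\alpha_{\min},1/\mu)$ in two lines, whereas the paper --- having only recorded non-strict decrease --- runs a longer contradiction argument analyzing when the supremum in $\Lambda^*$ is attained and reducing to $\Lambda(-1)=0$. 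Finally, your explicit treatment of the case $\mu=+\infty$ in part~(ii), via the bound $-\Lambda(\lambda)\leq-\log\pr{X\geq\theta}+\lambda\log(1/\theta)$, fills in a step that the paper leaves implicit when it appeals to ``continuity of $\Lambda^*$'' near $1/\mu$. Both approaches ultimately rest on Proposition~\ref{prop:lambdastar_finite} to ensure $\mathcal{D}_{\Psi}$ extends past $1/\mu$ when $X$ is non-degenerate, as you correctly flag in your closing remarks.
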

\begin{proof}\mbox{}
\begin{enumerate}[(i)]
\item The continuity follows from the continuity of $\Lambda^*$.
For $(1/\mu, +\infty) \cap \mathcal{D}_{\Psi}$, $\Psi$ is strictly
increasing because $\Lambda^*$ is increasing and $\leg{z} > 0$ for
$z > -\mu$. For $(0, 1/\mu) \cap \mathcal{D}_{\Psi}$, using the
convexity of $\Lambda^*$, we have for $z < z' \leq -\mu$ in
$\mathcal{D}_{\Psi}$:
\[
\frac{\leg{z}}{-z} \geq \frac{\leg{z'}}{-z'}.
\]
Thus, $\Psi(-1/z) \geq \Psi(-1/z')$ and $\Psi$ is decreasing on
$(0, 1/\mu) \cap \mathcal{D}_{\Psi}$.

\item Fix any $z' \in (-\mu, 0)$, then using the positivity of
$\Lambda^*$, $\leg{z'} > 0$ and thus for $c \geq -1/z'$,
\[
\Psi(c) = c \Lambda^*(-1/c) \geq c \Lambda^*(z').
\]
As a result, for $c$ large enough $\Psi(c) > 1$. This shows that
$\alpha_{\max} < +\infty$. Moreover, if $\law(X)$ is not a single
mass, then Proposition \ref{prop:lambdastar_finite} and the
continuity of $\Lambda^*$ imply that $\Psi$ is smaller than $1$ on
an interval $[1/\mu, c]$ for some $c > 1/\mu$. This shows that
$\alpha_{\max} > 1/\mu$.

Furthermore, taking $c < \alpha_{\max}$, by definition of
$\alpha_{\max}$ and as $\Psi$ is strictly increasing on $(1/\mu,
+\infty)$, $\Psi(c) < 1$.

\item First, if $\Lambda(\lambda) = +\infty$ for all $\lambda < 0$,
then $\leg{z} = 0$ for all $z<-\mu$. In this case, $\alpha_{\min}
= 0 < 1/\mu$ and $\Psi(c) = 0 < 1$ for all $c \in (\alpha_{\min},
1/\mu)$.

Assume now that $\Lambda(\lambda) < +\infty$ for some $\lambda <
0$. Then using Proposition \ref{prop:lambdastar_finite}, we have
$\alpha_{\min} < 1/\mu$. It remains to show that for $c \in
(\alpha_{\min}, 1/\mu)$, $\Psi(c) < 1$. Suppose for the sake of
contradiction that this is not the case. Then there exists $c >
\alpha_{\min}$ such that $\Psi(c) = 1$. As $\Psi$ is a decreasing
function in $(\alpha_{\min}, 1/\mu)$, this implies that there
exists $z'_1 < z'_2 < 1/\mu$ such that $\leg{z} = -z$ for all $z
\in [z'_1,z'_2]$. But for $z_1 < z_2$ in $(z'_1, z'_2)$, we have
\begin{align}\label{eq:conv}
\leg{\frac{z_1 + z_2}{2}} &= \sup_{\lambda \leq 0} \left\{ \lambda \frac{z_1+z_2}{2} - \Lambda(\lambda) \right\} \notag \\
&\leq \frac{1}{2} \sup_{\lambda \leq 0} \big\{ \lambda z_1 - \Lambda(\lambda) \big\}  + \frac{1}{2} \sup_{\lambda \leq 0} \big\{ \lambda z_2 - \Lambda(\lambda) \big\} \\
                &= -\frac{z_1+z_2}{2}. \notag
\end{align}
So we must have equality in \eqref{eq:conv}. This means that the
suprema defining $\Lambda^*(z_1)$ and $\Lambda^*(z_2)$ are
attained at the same point. We have $\Lambda^*(z_1) = \lambda z_1
- \Lambda(\lambda)$ and $\Lambda^*(z_2) = \lambda z_2 -
\Lambda(\lambda)$ for some $\lambda < 0$. Observing that
$\Lambda^*(z_1) - \Lambda^*(z_2) = \lambda (z_1 - z_2)$, we must
have $\lambda = -1$. This implies that $\Lambda^*(z_1) = -z_1 -
\Lambda(-1) = -z_1$. But $\Lambda(-1) = \log \ex{X^{-1}} > 0$.
This contradicts our assumption that $\Psi(c) = 1$ for some $c >
\alpha_{\min}$. Note that we supposed here that for $z \in \{z_1,
z_2\}$ there exists some $\lambda$ such that $\leg{z} = \lambda z
- \Lambda(\lambda)$. In the next paragraph, we show that we can
suppose this is the case.

Fix some $z \in [z_1, z_2]$. We want to show that there exists a
$\lambda \leq 0$ such that $\leg{z} = \lambda z -
\Lambda(\lambda)$. Consider $\mathcal{D}_{\Lambda} = \{\lambda \in
\R : \Lambda(\lambda) < +\infty \}$ and let $a = \inf
\mathcal{D}_{\Lambda}$. Suppose first $a > -\infty$, and consider
the limit $\ell = \lim_{\lambda \downarrow a} \Lambda(\lambda)$.
This limit exists because $\Lambda$ is a decreasing function of
$\lambda$. If $\ell < +\infty$, then by extending $\Lambda$
by continuity, $\leg{z} = \sup_{a \leq \lambda \leq 0} \{\lambda z
- \Lambda(\lambda)\}$ so we can assume that the supremum is
attained. If $\ell = +\infty$, then there exists $a_1$ such that
$\Lambda(\lambda) \geq a z$ for $\lambda < a_1$. Thus, we have
$\leg{z} = \sup_{a_1 \leq \lambda \leq 0} \{\lambda z -
\Lambda(\lambda)\}$, and the supremum is also attained in this
case. Now suppose that $a = -\infty$ and define similarly $\ell =
\lim_{\lambda \to -\infty} \Lambda(\lambda)$. If $\ell < +\infty$,
then $\Lambda^*(z) = +\infty$ which is a contradiction. The last
case is $\ell = +\infty$. As $\Lambda$ is a convex function, the
function $\varphi : \lambda \mapsto \lambda z - \Lambda(\lambda)$
is a concave function so it is monotone for $\lambda \leq
\lambda_0$ small enough. If it is increasing, then $\leg{z} =
\sup_{\lambda_0 \leq \lambda \leq 0} \{\lambda z -
\Lambda(\lambda)\}$ and we are done. If $\varphi$ is decreasing
for $\lambda \leq \lambda_0$, then we can suppose $\leg{z} =
\lim_{\lambda \to -\infty} \lambda z - \Lambda(\lambda)$ and by assumption $\leg{z} = -z$. But
then for $z'_1< z$, we have $\Lambda^*(z'_1) \leq \lim_{\lambda \to
-\infty} \lambda(z'_1-z) + \lambda z - \Lambda(\lambda) = +\infty$,
which contradicts the fact that $\Lambda(z'_1) = -z'_1$.

\end{enumerate}
\end{proof}



\bibliographystyle{abbrvnat}
\bibliography{rrt}

\end{document}